\title
[Weak Greenberg's generalized conjecture]
{Weak Greenberg's generalized conjecture for imaginary quadratic fields
}
\author{Kazuaki Murakami}
\address{Department~of~Mathematical~Sciences,~Graduate~School~of~Science~and~Engineering,~Keio~University,
Hiyoshi,~Kohoku\textrm{-}ku, Yokohama, Kanagawa~223-8522,~Japan}
\date{}
\newcommand{\plim}[1][]{\mathop{\varprojlim}\limits_{#1}}
\newtheorem{thm}{Theorem}[section]
\newtheorem{prop}[thm]{Proposition}
\newtheorem{cor}[thm]{Corollary}
\newtheorem{lem}[thm]{Lemma}
\newtheorem{Rem}[thm]{\rm{Remark}}
\newtheorem{Ex}[thm]{Example}
\newtheorem{GGC}{Conjecture}
\newtheorem{WGGC}{Conjecture}
\newtheorem{Defn}{Definition}
\begin{document}
\begin{abstract}
Let $p$ be an odd prime number and $k$ an imaginary quadratic field
in which $p$ splits.
In this paper, we consider a weak form of Greenberg's generalized conjecture for $p$ and $k$,
which states that
the non-trivial Iwasawa module of the maximal multiple $\mathbb{Z}_p$-extension field over $k$
has a non-trivial pseudo-null submodule.
We prove
this conjecture for $p$ and $k$
under the assumption that the Iwasawa $\lambda$-invariant for a certain $\mathbb{Z}_p$-extension
over a finite abelian extension of $k$
vanishes and that the characteristic ideal of the Iwasawa module
associated to the cyclotomic $\mathbb{Z}_p$-extension over $k$ has a square-free generator.
\end{abstract}
\maketitle
\section{Introduction}\label{In}
Let $p$ be a prime number and $k$ a number field.
In Iwasawa theory, one studies the arithmetic of the multiple $\mathbb{Z}_p$-extensions over $k$,
which are galois groups being topologically
isomorphic to direct products of copies of the additive group $\mathbb{Z}_p$ of $p$-adic integers.
In this paper, we consider the maximal multiple $\mathbb{Z}_p$-extension field $\widetilde{k}$ of $k$.
By class field theory, there exists a positive integer $r$ with $r\geq r_2(k)+1$ such that
$\mathrm{Gal}(\widetilde{k}/k)$ is isomorphic to $\mathbb{Z}_p^{\oplus r}$, where
$r_2(k)$ is the number of complex places of $k$.
Let $L_{\widetilde{k}}$
be the maximal unramified pro-$p$ abelian extension field of $\widetilde{k}$.
We denote the galois group $\mathrm{Gal}(L_{\widetilde{k}}/\widetilde{k})$
by $X_{\widetilde{k}}$ and
call it the Iwasawa module of $\widetilde{k}/k$.
Iwasawa \cite{Iw} and Greenberg \cite{Gre} proved that Iwasawa modules are finitely generated torsion
$\mathbb{Z}_p[[\mathrm{Gal}(\widetilde{k}/k)]]$-modules.
This module has a very important role in Iwasawa theory.
The usual form of Iwasawa's main conjecture states that
Iwasawa modules, regarded as algebraic objects,
have a relation with analytic objects;
roughly speaking, the characteristic ideal of a Iwasawa module
should coincide with the ideal generated by a certain $p$-adic $L$-function.
Under several assumptions,
Mazur-Wiles \cite{M-W} and Wiles \cite{Wiles} proved this conjecture
for the cyclotomic $\mathbb{Z}_p$-extensions of real abelian fields and
of totally real number fields, respectively.
\par
Concerning the structure of Iwasawa modules,
Greenberg formulated a conjecture which is called Greenberg's generalized conjecture (or GGC, for short):
\begin{GGC}[\cite{Gre1}, Greenberg's generalized conjecture (GGC)]
\begin{rm}
For each prime number $p$ and number field $k$,
the $\mathbb{Z}_p[[\mathrm{Gal}(\widetilde{k}/k)]]$-module $X_{\widetilde{k}}$ is pseudo-null,
in other words, the height of the annihilator ideal
$\mathrm{Ann}_{\mathbb{Z}_p[[\mathrm{Gal}(\widetilde{k}/k)]]}(X_{\widetilde{k}})$ is greater than one.
\end{rm}
\end{GGC}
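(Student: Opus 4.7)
Since the statement is GGC itself, I first specialize to the paper's setting: $k$ is imaginary quadratic with $p$ split, so by class field theory $\Gamma := \mathrm{Gal}(\widetilde{k}/k) \cong \mathbb{Z}_p^{\oplus 2}$ and the Iwasawa algebra $\Lambda := \mathbb{Z}_p[[\Gamma]] \cong \mathbb{Z}_p[[T_1,T_2]]$ is a regular local ring of Krull dimension $3$. Pseudo-nullity of $X_{\widetilde{k}}$ is then the assertion that $\mathrm{Ann}_\Lambda(X_{\widetilde{k}})$ has height $\geq 2$, equivalently that the support of $X_{\widetilde{k}}$ is a finite union of height-$\geq 2$ primes. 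My plan is to produce two elements of $\Lambda$ annihilating $X_{\widetilde{k}}$ (possibly up to a pseudo-null error) and then argue that they are coprime.

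The two elements should come from the two distinguished $\mathbb{Z}_p$-extensions of $k$: the cyclotomic extension $k_\infty^{\mathrm{cyc}}$ and the anticyclotomic extension $k_\infty^{\mathrm{ac}}$. For each, the corresponding one-variable Iwasawa module is torsion, and I would lift a generator of its characteristic ideal to $\Lambda$. A control/descent theorem in the style of Greenberg relates the quotients $X_{\widetilde{k}}/I_{\mathrm{cyc}} X_{\widetilde{k}}$ and $X_{\widetilde{k}}/I_{\mathrm{ac}} X_{\widetilde{k}}$ (where $I_*$ is the augmentation of the corresponding $\Gamma$-quotient) to the one-variable Iwasawa modules over $k_\infty^{\mathrm{cyc}}$ and $k_\infty^{\mathrm{ac}}$. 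Combined with a two-variable Nakayama argument and Rubin's two-variable main conjecture via elliptic units, this should yield elements $f_{\mathrm{cyc}},f_{\mathrm{ac}} \in \Lambda$ in the annihilator.

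The hypotheses listed in the abstract should then be used to guarantee coprimality. The vanishing of the relevant $\lambda$-invariant over a finite abelian extension forces the anticyclotomic-side annihilator not to collapse into a factor of the cyclotomic $p$-adic $L$-function (by the functional equation and a Ferrero–Washington-type argument for the $\mu$-invariant), while the square-freeness of the generator of the cyclotomic characteristic ideal prevents the two annihilators from sharing a height-one prime of $\Lambda$. One therefore obtains two coprime annihilators, whose common vanishing locus has codimension $\geq 2$.

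The main obstacle, and the reason GGC is in fact open, is bootstrapping from the existence of a non-trivial pseudo-null \emph{submodule} to pseudo-nullity of the \emph{whole} module. The characteristic ideal of a would-be pseudo-null module is trivial, so characteristic-ideal methods cannot finish the argument; one is forced to control the Iwasawa adjoint $\alpha(X_{\widetilde{k}})$ and the global duality term $E^2(X_{\widetilde{k}}) := \mathrm{Ext}^2_\Lambda(X_{\widetilde{k}},\Lambda)$ via Jannsen's spectral sequence. Showing that these higher Ext groups vanish (equivalently, that $X_{\widetilde{k}}$ has no non-zero reflexive quotient) is the deep step, and it is precisely here that the approach only yields the weak form of the conjecture unless the listed vanishing and square-freeness hypotheses are strong enough to collapse the relevant spectral sequence.
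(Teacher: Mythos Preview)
The statement you are asked to prove is GGC itself, which is an \emph{open conjecture}. The paper does not prove it. What the paper proves is the \emph{weak} form (existence of a non-trivial pseudo-null submodule of $X_{\widetilde{k}}$) under the hypotheses of Theorems~\ref{main thm} and~\ref{main thm 2}; the full GGC is obtained only in Corollary~\ref{cor GGC}, and there only by invoking Fujii's criterion (Proposition~\ref{Fu}), which requires the extra assumption that $\mathrm{char}_{\mathbb{Z}_p[[T]]}\big((X_{\widetilde{k}})_{\mathrm{Gal}(\widetilde{k}/k_\infty^c)}\big)$ is a prime ideal. Your proposal does not meet this target: you sketch a strategy for producing two coprime annihilators, but you yourself concede in the last paragraph that the passage from a pseudo-null submodule to pseudo-nullity of the whole module is ``the deep step'' that your argument does not supply. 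That is not a proof; it is an outline that ends exactly where the difficulty begins.

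Even as an approach to \emph{weak} GGC, your plan diverges substantially from the paper and has concrete gaps. You propose to lift generators of one-variable characteristic ideals to $\Lambda$ and show coprimality via Rubin's two-variable main conjecture and a Ferrero--Washington-type input; none of this appears in the paper, and you give no mechanism by which square-freeness of the cyclotomic characteristic ideal would force coprimality of the two lifted elements in $\Lambda$ (square-freeness is a one-variable statement and says nothing about common height-one primes in two variables). The paper's method is entirely different: it fixes a single explicit annihilator $f(S,T)\in\mathrm{Ann}_\Lambda(X_{\widetilde{k}})$ (Lemma~\ref{f(S,T)}), assumes weak GGC fails, and then derives two mutually contradictory inequalities
\[
\mathrm{rank}_{\mathbb{Z}_p}\big(\mathrm{Gal}(M_{\mathfrak{p}^\ast}(N_\infty)/\widetilde{k})\big)\le m-1
\quad\text{and}\quad
\mathrm{rank}_{\mathbb{Z}_p}\big(\mathrm{Gal}(M_{\mathfrak{p}^\ast}(N_\infty)/\widetilde{k})\big)\ge m,
\]
with $m=\min\{[L_k\cap\widetilde{k}:k],\,[\mathrm{Gal}(\widetilde{k}/k):\mathfrak{D}_{\mathfrak{p}^\ast}]\}$. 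The square-freeness hypothesis is used not for coprimality of annihilators but to show $X_{\widetilde{k}}$ is pseudo-isomorphic to the cyclic module $\Lambda/(f(S,T))$ (Corollary~\ref{psisom}), which is what makes the rank estimate (B) go through.
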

Here a $\mathbb{Z}_p[[\mathrm{Gal}(\widetilde{k}/k)]]$-module is said to be pseudo-null
if there exist two relatively prime annihilators.
If $k$ is a totally real number field and if Leopoldt's conjecture holds for $p$ and $k$,
then this conjecture is equivalent to the usual form of Greenberg's conjecture,
which states that the Iwasawa module is finite (\cite{Gre2}).
Furthermore, by the structure theorem (see Section \ref{Preli}),
the usual form of Greenberg's conjecture holds
if and only if
the Iwasawa $\lambda$- and $\mu$-invariants of the
cyclotomic $\mathbb{Z}_p$-extension over $k$ vanish.
\par
In this paper, we consider a weaker form of this conjecture.
We call the following conjecture weak Greenberg's generalized conjecture (or weak GGC, for short):
\begin{WGGC}[Weak Greenberg's generalized conjecture (weak GGC)]
\begin{rm}
Let $p$ be a prime number and $k$ a number field.
Assume that $X_{\widetilde{k}}$ is not trivial.
Then $X_{\widetilde{k}}$ has a non-trivial pseudo-null $\mathbb{Z}_p[[\mathrm{Gal}(\widetilde{k}/k)]]$-submodule.
\end{rm}
\end{WGGC}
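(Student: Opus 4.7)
The plan is to argue by contradiction. For $k$ imaginary quadratic in which $p$ splits one has $\mathrm{Gal}(\widetilde{k}/k)\cong\mathbb{Z}_p^{\oplus 2}$, so $\Lambda:=\mathbb{Z}_p[[\mathrm{Gal}(\widetilde{k}/k)]]$ is a regular local ring of Krull dimension three. A finitely generated torsion $\Lambda$-module $M$ has no non-trivial pseudo-null submodule if and only if the canonical map into its reflexive hull $E=\bigoplus_i \Lambda/\mathfrak{p}_i^{e_i}$ (with the $\mathfrak{p}_i$ height-one primes) is injective. So the goal is to assume $X_{\widetilde{k}}\neq 0$ and $X_{\widetilde{k}}\hookrightarrow E$, and to derive a contradiction from the two hypotheses in the abstract.

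First I would exploit the cyclotomic $\mathbb{Z}_p$-extension $k_\infty^{\mathrm{cyc}}/k$. Fixing a topological generator $\gamma$ of $\mathrm{Gal}(\widetilde{k}/k_\infty^{\mathrm{cyc}})$, a standard descent argument (the snake lemma applied to the defining extensions of $X_{\widetilde{k}}$ and $X_{k_\infty^{\mathrm{cyc}}}$, controlling inertia and decomposition at the two primes above $p$) yields, up to finite kernel and cokernel, a surjection $X_{\widetilde{k}}/(\gamma-1)X_{\widetilde{k}}\twoheadrightarrow X_{k_\infty^{\mathrm{cyc}}}$. Combining this with the embedding $X_{\widetilde{k}}\hookrightarrow E$ and the hypothesis $\mathrm{char}(X_{k_\infty^{\mathrm{cyc}}})=(f)$ with $f$ square-free should pin down the height-one primes $\mathfrak{p}_i$ allowed to occur in $E$ and force every multiplicity $e_i$ to equal $1$.

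Next I would bring in the auxiliary finite abelian extension $L/k$. Passing to the Iwasawa module of $\widetilde{L}$ and then descending to the chosen $\mathbb{Z}_p$-extension $L_\infty/L$ for which the $\lambda$-invariant vanishes, one obtains a second descent identity in a direction transverse to $\gamma-1$. Intersecting the constraints from the two descents—each component $\Lambda/\mathfrak{p}_i$ of $E$ must have trivial image under both—should annihilate the image of $X_{\widetilde{k}}$ in $E$, contradicting $X_{\widetilde{k}}\neq 0$.

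The main obstacle I expect is the control of the error terms. At each descent step one must track a cokernel from inertia at the primes above $p$ and a kernel from $H^1$ of the relevant decomposition groups; keeping these simultaneously finite (or pseudo-null) for both the $\gamma$-direction and the auxiliary direction will require careful use of Jannsen's $\mathrm{Ext}$-spectral sequence together with explicit local computations at the two split primes. A secondary difficulty is to choose $L/k$ and the $\mathbb{Z}_p$-extension $L_\infty/L$ so that $\lambda(L_\infty/L)=0$ translates precisely into the vanishing of the components of $E$ singled out by the square-free factorization of $f$; this is presumably where the splitting of $p$ in $k$, via the two Frobenius directions in $\mathrm{Gal}(\widetilde{k}/k)$, enters essentially.
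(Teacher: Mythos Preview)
First a framing remark: the displayed statement is a \emph{conjecture}; the paper does not prove it in general but only under the hypotheses of Theorems~\ref{main thm} and~\ref{main thm 2}. Your proposal is really aimed at Theorem~\ref{main thm}, so I compare it to that.

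Your opening move---assuming no non-trivial pseudo-null submodule and embedding $X_{\widetilde{k}}$ into a direct sum $E=\bigoplus_i\Lambda/\mathfrak{p}_i^{e_i}$---is correct and is exactly what the paper uses (Lemma~\ref{principal}, Proposition~\ref{cycsub}, Corollary~\ref{psisom}). The cyclotomic descent together with the square-free hypothesis does indeed force all $e_i=1$ and identifies the $\mathfrak{p}_i$ with the prime factors of the fixed annihilator $f(S,T)$; this is Proposition~\ref{char cyc} and the proof of Proposition~\ref{cycsub}.

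The genuine gap is in your second step. The expectation that ``$\lambda(L_\infty/L)=0$ translates into the vanishing of the components of $E$'' is not right, and this is where the argument cannot proceed as you outline. For the relevant $\mathbb{Z}_p$-extension $N_\infty/k$ (there is no auxiliary finite $L$ in Theorem~\ref{main thm}; the abstract's phrasing covers both theorems at once), the coinvariants $(X_{\widetilde{k}})_{\Gamma}$ with $\Gamma=\mathrm{Gal}(\widetilde{k}/N_\infty)$ are identified with $\mathrm{Gal}(M_{\mathfrak{p}^\ast}(N_\infty)/\widetilde{k})$, not with $X_{N_\infty}$. Even when $\lambda(N_\infty/k)=0$ this Galois group has \emph{positive} $\mathbb{Z}_p$-rank, because the inertia groups at the $p^{n_0}$ primes of $N_\infty$ above $\mathfrak{p}^\ast$ each inject as a copy of $\mathbb{Z}_p$ into $\mathrm{Gal}(M_{\mathfrak{p}^\ast}(N_\infty)/N_\infty)$. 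So the second descent does not annihilate anything; it only gives a numerical upper bound.

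The paper's mechanism is therefore not ``two annihilating constraints'' but a \emph{numerical} contradiction on a single quantity, the $\mathbb{Z}_p$-rank of $(X_{\widetilde{k}})_\Gamma$. Inequality~(A) bounds it above by $\min\{[L_k\cap\widetilde{k}:k],\,[\mathrm{Gal}(\widetilde{k}/k):\mathfrak{D}_{\mathfrak{p}^\ast}]\}-1$, using $\lambda(N_\infty/k)=0$ to kill $X_{N_\infty}$ and then counting inertia contributions (Propositions~\ref{rank uper} and~\ref{(A)}). Inequality~(B) bounds it below by the same minimum, using the embedding $\Lambda/(f)\hookrightarrow X_{\widetilde{k}}$ to reduce to estimating $\lambda\bigl(f(S,(1+S)^{-up^s}-1)\bigr)$, which in turn is controlled by the decomposition behaviour of $p$ in $\widetilde{k}/\mathbb{Q}$ and the $p$-adic valuation of $g_0(0)=f(0,0)$ (Propositions~\ref{normal g}, \ref{g0}, \ref{normal rankXM}, \ref{fprop}). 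The delicate part is not Ext-spectral-sequence bookkeeping but this arithmetic analysis of how the decomposition index $[\mathrm{Gal}(\widetilde{k}/k):\mathfrak{D}_{\mathfrak{p}}]$ compares with $[L_k\cap\widetilde{k}:k]$ and with $\mathrm{ord}_p(g_0(0))$ (Lemma~\ref{cond normal}, Proposition~\ref{index}); the argument splits into the normal and non-normal cases for $\mathfrak{D}_{\mathfrak{p}}\lhd\mathrm{Gal}(\widetilde{k}/\mathbb{Q})$, and the exclusion of the $p$-split $p$-rational case is exactly what makes the lower bound strict enough.
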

We note that weak GGC is proposed by Nguyen Quang Do \cite{Quang1, Quang2} for totally real number fields
and by Wingberg  \cite{Wing} for arbitrary number fields.
In a very recent paper,
Kataoka \cite{Kata20} proved that
$X_{\widetilde{k}}$ has no non-trivial finite $\mathbb{Z}_p[[\mathrm{Gal}(\widetilde{k}/k)]]$-submodule
for imaginary quadratic fields in which $p$ splits.
\par
Throughout this paper, we suppose that $p$ is an odd prime number and that $k$ is an imaginary quadratic field.
We assume that $p$ splits in $k$ into $\mathfrak{p}$ and $\mathfrak{p}^{\ast}$.
Under this assumption, there exists a uniquely defined $\mathbb{Z}_p$-extension $N_{\infty}/k$ (respectively, $N_{\infty}^{\ast}/k$) 
such that the prime ideal $\mathfrak{p}^{\ast}$ (respectively, $\mathfrak{p}$) does not ramify.
We also denote by $k_{\infty}^c$ the cyclotomic $\mathbb{Z}_p$-extension field of $k$.\par
To state our main theorem,
we introduce the notion of $p$-split $p$-rational fields.
\begin{Defn}\label{rational}
For an imaginary quadratic field $k$ in which $p$ splits into $\mathfrak{p}$ and $\mathfrak{p}^{\ast}$,
$k$ is said to be $p$-split $p$-rational
if $k$ satisfies that $L_k \subset \widetilde{k}$, $\widetilde{k}^{\mathfrak{D_p}} \neq k$,
and that $\widetilde{k}^{\mathfrak{D_p}} \subset L_k$,
where $\mathfrak{D_p}$ is the decomposition group of the prime $\mathfrak{p}$
in $\mathrm{Gal}(\widetilde{k}/k)$,
$\widetilde{k}^{\mathfrak{D_p}}$ is the fixed field of $\widetilde{k}$ by $\mathfrak{D_p}$,
and $L_k$ is the $p$-Hilbert class field of $k$.
\end{Defn}
If we suppose that
$\widetilde{k}^{\mathfrak{D_p}} \subset L_k$, then
$\mathfrak{D}_{\mathfrak{p}}$ is a normal subgroup of $\mathrm{Gal}(\widetilde{k}/\mathbb{Q})$  (Lemma \ref{cond normal}).
Hence we have ${\mathfrak{D_p}}={\mathfrak{D_{p^{\ast}}}}$.
Therefore, the definition of a 
$p$-split $p$-rational field does not depend on the choice of $\mathfrak{p}$.
If $k$ is $p$-split $p$-rational, the $p$-Sylow subgroup of the ideal class group of $k$ is cyclic
from the assumption that $L_k \subset \widetilde{k}$.
In Section \ref{B}, we will give a necessary and sufficient condition for $k$ to be $p$-split $p$-rational (Proposition \ref{eqrational}).\par
In this paper, we prove the following
\begin{thm}\label{main thm}
Let $p$ be an odd prime number and $k$ an imaginary quadratic field
which is not $p$-split $p$-rational.
Assume the following conditions:\par
$(\mathrm{i})$ The Iwasawa $\lambda$-invariant of $N_{\infty}/k$ is zero.\par
$(\mathrm{ii})$ The characteristic ideal of $X_{k_{\infty}^c}$
has a generator which is square-free.\\
Then weak {\rm{GGC}} holds for $p$ and $k$.
\end{thm}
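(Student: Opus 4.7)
The plan is to combine information from the distinguished $\mathbb{Z}_p$-subextensions $N_{\infty}/k$ and $k_{\infty}^c/k$ of $\widetilde{k}/k$ to constrain the $\Lambda$-module structure of $X_{\widetilde{k}}$, where $\Lambda := \mathbb{Z}_p[[\mathrm{Gal}(\widetilde{k}/k)]] \cong \mathbb{Z}_p[[T_1,T_2]]$. I will argue by contrapositive: assuming $X_{\widetilde{k}}$ is non-trivial but has no non-zero pseudo-null $\Lambda$-submodule, I will deduce that $k$ must be $p$-split $p$-rational, contradicting the hypothesis.

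\textbf{First,} I would exploit condition $(\mathrm{i})$. Choose a topological generator $\sigma$ of $\mathrm{Gal}(\widetilde{k}/N_{\infty})$ and set $T := \sigma - 1 \in \Lambda$. The standard control sequence comparing $X_{\widetilde{k}}/T X_{\widetilde{k}}$ with $X_{N_{\infty}}$, together with the vanishing of $\lambda(N_{\infty}/k)$, forces this cokernel to be pseudo-null as a $\mathbb{Z}_p[[\mathrm{Gal}(N_{\infty}/k)]]$-module. Since $X_{\widetilde{k}}$ has no non-zero finite $\Lambda$-submodule by Kataoka's result cited in the introduction, and by assumption no non-zero pseudo-null $\Lambda$-submodule at all, multiplication by $T$ must be essentially injective on $X_{\widetilde{k}}$ in the sense that $T$ does not divide the characteristic ideal of $X_{\widetilde{k}}$.

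\textbf{Second,} I would use condition $(\mathrm{ii})$. Let $\tau$ be a topological generator of $\mathrm{Gal}(\widetilde{k}/k_{\infty}^c)$ and $S := \tau - 1$. The cyclotomic control sequence identifies $X_{\widetilde{k}}/S X_{\widetilde{k}}$ with $X_{k_{\infty}^c}$ up to controlled error, and $(\mathrm{ii})$ provides a square-free generator $f$ of the characteristic ideal of $X_{k_{\infty}^c}$. Hence $X_{k_{\infty}^c}$ is pseudo-isomorphic to $\bigoplus_i \mathbb{Z}_p[[\mathrm{Gal}(k_{\infty}^c/k)]]/(f_i)$ with distinct irreducible $f_i$. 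Lifting each $f_i$ to $\Lambda$, and combining with the first step together with the rigidity coming from the absence of pseudo-null submodules, I expect to obtain a pseudo-isomorphism $X_{\widetilde{k}} \sim \bigoplus_i \Lambda/(F_i)$ where the $F_i$ lift the $f_i$ and are coprime to $T$.

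\textbf{Third,} I would interpret this module-theoretic description through class field theory, matching each factor $\Lambda/(F_i)$ with a contribution coming from either the $p$-Hilbert class field $L_k$ or from the inertia at $\mathfrak{p}$. The decomposition group $\mathfrak{D_p}$ enters through its action on the inertia-at-$\mathfrak{p}$ component of $X_{\widetilde{k}}$, and the three conditions $L_k \subset \widetilde{k}$, $\widetilde{k}^{\mathfrak{D_p}} \neq k$, and $\widetilde{k}^{\mathfrak{D_p}} \subset L_k$ are precisely what is required in order for the matching imposed by the two control sequences to be consistent once no pseudo-null submodules are available to absorb the discrepancy. Via the characterization to be proved in Proposition \ref{eqrational}, this means exactly that $k$ is $p$-split $p$-rational, the desired contradiction. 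The \emph{main obstacle} will be this last step: bookkeeping the pseudo-null errors from both control sequences precisely enough to convert the $\Lambda$-module rigidity into the Galois-theoretic statement about $\mathfrak{D_p}$ sitting inside $\mathrm{Gal}(\widetilde{k}/L_k)$, so that the sole case left uncovered by the argument is exactly the $p$-split $p$-rational one.
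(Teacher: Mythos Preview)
Your first step contains a genuine error that undermines the whole strategy. With $\Gamma=\mathrm{Gal}(\widetilde{k}/N_{\infty})$, the extension $\widetilde{k}/N_{\infty}$ is \emph{ramified} at every prime of $N_{\infty}$ above $\mathfrak{p}^{\ast}$, so the ``standard control sequence'' does not compare $(X_{\widetilde{k}})_{\Gamma}$ with $X_{N_{\infty}}$. Instead one has $(X_{\widetilde{k}})_{\Gamma}\cong \mathrm{Gal}(M_{\mathfrak{p}^{\ast}}(N_{\infty})/\widetilde{k})$, the Galois group of the maximal abelian $\mathfrak{p}^{\ast}$-ramified pro-$p$ extension of $N_{\infty}$ over $\widetilde{k}$. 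Even when $\lambda(N_{\infty}/k)=0$ this group is a finitely generated $\mathbb{Z}_p$-module of strictly positive rank (the inertia subgroups at the $p^{n_0}$ primes above $\mathfrak{p}^{\ast}$ each contribute a copy of $\mathbb{Z}_p$), hence it is \emph{not} pseudo-null over $\mathbb{Z}_p[[\mathrm{Gal}(N_{\infty}/k)]]$. Your conclusion that the $N_{\infty}$-variable ``does not divide the characteristic ideal'' is therefore unsupported, and in fact the nonvanishing of $\mathrm{rank}_{\mathbb{Z}_p}((X_{\widetilde{k}})_{\Gamma})$ is precisely the invariant the paper exploits.

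The paper's argument runs in the opposite direction from yours: rather than showing $(X_{\widetilde{k}})_{\Gamma}$ is small, it bounds the integer $r=\mathrm{rank}_{\mathbb{Z}_p}\bigl(\mathrm{Gal}(M_{\mathfrak{p}^{\ast}}(N_{\infty})/\widetilde{k})\bigr)$ from above and below and derives a contradiction. Condition~(i) plus Fujii's criterion $M_{\mathfrak{p}^{\ast}}(\widetilde{k})=L_{\widetilde{k}}$ (equivalent to the failure of weak GGC) give the upper bound $r\le m-1$, where $m=\min\{[L_k\cap\widetilde{k}:k],\,[\mathrm{Gal}(\widetilde{k}/k):\mathfrak{D}_{\mathfrak{p}^{\ast}}]\}$, via a count of inertia subgroups (with a separate and delicate argument when $\mathfrak{D}_{\mathfrak{p}}$ is not normal in $\mathrm{Gal}(\widetilde{k}/\mathbb{Q})$). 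For the lower bound, condition~(ii) and the absence of pseudo-null submodules are used to embed a cyclic module $Y\cong\Lambda/(f(S,T))$ into $X_{\widetilde{k}}$, where $f$ is a specific annihilator of degree $\lambda^{\ast}=\lambda(k_{\infty}^c/k)-1$ in the cyclotomic variable; one then computes $\mathrm{rank}_{\mathbb{Z}_p}(Y_{\Gamma})\ge m$ by analysing the $\lambda$-invariant of the constant term $g_0(S)=f(S,0)$ and comparing $\mathrm{ord}_p(g_0(0))$ with the decomposition index. The hypothesis that $k$ is not $p$-split $p$-rational enters exactly here, as the strict inequality $[\mathrm{Gal}(\widetilde{k}/k):\mathfrak{D}_{\mathfrak{p}}]<\#(\mathbb{Z}_p/g_0(0)\mathbb{Z}_p)$ needed to push $\lambda(g_0(S))$ up. Your Step~3 gestures at the endpoint of this analysis but supplies none of the mechanism; in particular, nothing in your sketch produces the crucial numerical link between $g_0(0)$, $[\mathrm{Gal}(\widetilde{k}/k):\mathfrak{D}_{\mathfrak{p}}]$, and $[L_k\cap\widetilde{k}:k]$.
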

On the $\mathbb{Z}_p$-extension $N_{\infty}/k$, it is known that the Iwasawa $\mu$-invariant of $N_{\infty}/k$ is zero.
This was proved by Gillard \cite{Gi} and Schneps \cite{sch} for $p \geq 5$ and
Oukhaba-Vigui\'{e} \cite{OV} for $p=2, 3$.
By \cite[Proposition 1.C]{Mi},
the Iwasawa $\lambda$-invariant of $N_{\infty}/k$ is zero if and only if
every ideal class of the $p$-part of the ideal class group of $k$
becomes principal in $N_{\infty}$. 
We note that vanishing of the Iwasawa $\lambda$-invariant of $N_{\infty}/k$ here is similar to that in the usual form of Greenberg's conjecture
in the case where $\mathfrak{p}$ is totally ramified in $N_{\infty}/k$.
No counter examples of the assumption (i) have been found yet.
Fukuda-Komatsu \cite{F-K1, F-K2} checked that the Iwasawa $\lambda$-invariants of $N_{\infty}/k$ vanish
for some imaginary quadratic fields with $p=3$ 
under the assumption that $\mathfrak{p}$ is totally ramified in $N_{\infty}/k$.
On the assumption (ii) in Theorem \ref{main thm},
we note that we do not need this condition in the case where the Iwasawa module
$X_{\widetilde{k}}$ is cyclic as a $\mathbb{Z}_p[[\mathrm{Gal}(\widetilde{k}/k)]]$-module (Remark \ref{frem}).
\par
There are two key new ideas of the proof of this theorem.
One is to find an annihilator of $X_{\widetilde{k}}$,
which is introduced in \cite[Lemma $3.3$]{Mu}.
We fix this annihilator and denote it by $f(S,T)$
(Lemma \ref{f(S,T)}).
Using this power series, we can 
prove that the Iwasawa module $X_{\widetilde{k}}$ is pseudo-isomorphic to
a certain $\mathbb{Z}_p[[\mathrm{Gal}(\widetilde{k}/k)]]$-cyclic module,
provided that weak GGC does not hold for $p$ and $k$
(Corollary \ref{psisom}).
The other is  to consider a certain subfield of $L_{\widetilde{k}}$, which is denoted by $M_{\mathfrak{p}^{\ast}}(N_{\infty})$.
From the $\mathbb{Z}_p$-rank of $\mathrm{Gal}(M_{\mathfrak{p}^{\ast}}(N_{\infty})/\widetilde{k})$,
we consider two inequalities (A) and (B), which are stated in the end of Section \ref{decomp}.
These inequalities contradict each other, which imply Theorem \ref{main thm}.
We will prove them,
assuming that weak GGC does not hold for $p$ and $k$.
\par
In the case where $k$ is a $p$-split $p$-rational field, we know that the Iwasawa $\lambda$-invariant of $N_{\infty}/k$ is zero
by genus formula (see for example \cite[Remark $3.2$]{Mu}).
Furthermore, we see that $X_{\widetilde{k}}$ is cyclic as a $\mathbb{Z}_p[[\mathrm{Gal}(\widetilde{k}/k)]]$-module
(\cite[Proposition 3.8]{Mu}).
To treat this case, we consider the $\mathbb{Z}_p$-extension $N_{\infty} N_{s+1}^{*}/ N_{s+1}^{*}$,
where $s$ is the positive integer satisfying $p^s = [L_k :k]$ and $N_{s+1}^{*}$ is the $(s+1)$-st layer of $N_{\infty}^{*}/k$.
We put $H= N_{s+1}^{*}$ and $H_{\infty}=N_{\infty} N_{s+1}^{*}$.
Then $H_{\infty}/H$ is a $\mathbb{Z}_p$-extension unramified outside all prime ideals of $H$ lying above $\mathfrak{p}$.
We prove the following theorem
under the assumption that the Iwasawa $\lambda$-invariant for $H_{\infty}/ H$ vanishes, which is similar to that
in the usual form of Greenberg's conjecture mentioned above.
\begin{thm}\label{main thm 2}
Let $p$ be an odd prime number and $k$ an imaginary quadratic field
which is $p$-split $p$-rational.
Suppose that the Iwasawa $\lambda$-invariant of $H_{\infty}/ H$ is zero.
Then weak {\rm{GGC}} holds for $p$ and $k$.
\end{thm}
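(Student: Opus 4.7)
The plan is to argue by contradiction, following the strategy used for Theorem \ref{main thm} but replacing the auxiliary $\mathbb{Z}_p$-extension $N_{\infty}/k$ by $H_{\infty}/H$. Set $\Lambda := \mathbb{Z}_p[[\mathrm{Gal}(\widetilde{k}/k)]]$ and assume for contradiction that $X_{\widetilde{k}}$ is non-trivial and contains no non-trivial pseudo-null $\Lambda$-submodule. Because $k$ is $p$-split $p$-rational, \cite[Proposition 3.8]{Mu} shows that $X_{\widetilde{k}}$ is cyclic as a $\Lambda$-module, so the no-pseudo-null assumption forces an isomorphism $X_{\widetilde{k}} \cong \Lambda/(f(S,T))$, where $f(S,T)$ is the fixed annihilator from Lemma \ref{f(S,T)}. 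This is the point at which cyclicity eliminates the need for assumption (ii) of Theorem \ref{main thm} (cf.\ Remark \ref{frem}); the genus-formula argument recalled in \cite[Remark 3.2]{Mu} likewise renders assumption (i) automatic over the base $k$.

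The next step is to compute the $\mathbb{Z}_p$-rank of $\mathrm{Gal}(M_{\mathfrak{p}^{\ast}}(H_{\infty})/\widetilde{k})$, where $M_{\mathfrak{p}^{\ast}}(H_{\infty})$ is the analog over $H$ of the field $M_{\mathfrak{p}^{\ast}}(N_{\infty})$ introduced in Section \ref{decomp}: the maximal subfield of $L_{\widetilde{k}}$ containing $H_{\infty}$ whose extension over $H_{\infty}$ is unramified outside the primes of $H_{\infty}$ above $\mathfrak{p}^{\ast}$. The choice of $s$ with $p^s=[L_k:k]$ is engineered so that $L_k \subset H$, the primes of $H$ above $\mathfrak{p}$ are totally ramified in $H_{\infty}/H$, and those above $\mathfrak{p}^{\ast}$ are unramified there. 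Combining the hypothesis $\lambda(H_{\infty}/H)=0$ (together with $\mu(H_{\infty}/H)=0$, known by Gillard--Schneps--Oukhaba--Vigui\'{e}) with the capitulation criterion \cite[Proposition 1.C]{Mi} applied at the level of $H$, one obtains that the $p$-part of the ideal class group of $H$ becomes principal in $H_{\infty}$. This capitulation feeds into an analog of inequality (A), producing a lower bound for the $\mathbb{Z}_p$-rank of $\mathrm{Gal}(M_{\mathfrak{p}^{\ast}}(H_{\infty})/\widetilde{k})$. On the other hand, the cyclic presentation $X_{\widetilde{k}} \cong \Lambda/(f(S,T))$, together with the explicit description of $f(S,T)$ supplied by Lemma \ref{f(S,T)}, yields an upper bound playing the role of (B). The two bounds contradict each other, establishing the theorem.

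The hardest part will be to verify that the rank estimates (A) and (B) really do transpose from the base $k$ of Theorem \ref{main thm} to the enlarged base $H$ employed here. The subtlety is that $H$ contains several primes above each of $\mathfrak{p}$ and $\mathfrak{p}^{\ast}$, and one must check that the $p$-split $p$-rational hypothesis --- in particular the equality $\mathfrak{D}_{\mathfrak{p}} = \mathfrak{D}_{\mathfrak{p}^{\ast}}$ as normal subgroups of $\mathrm{Gal}(\widetilde{k}/\mathbb{Q})$ recorded after Definition \ref{rational} --- together with the specific choice $H = N_{s+1}^{\ast}$, keeps all decomposition and ramification data compatible with the rank computation carried out in Section \ref{decomp}. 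Only once this bookkeeping is in place do the analogs of (A) and (B) actually contradict each other.
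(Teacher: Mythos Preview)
Your contradiction-via-two-rank-bounds framework matches the paper, but you have the roles of (A) and (B) reversed: in the paper (A) is the \emph{upper} bound, obtained from the $\lambda=0$ hypothesis through the inertia subgroups, while (B) is the \emph{lower} bound, obtained from the $\Lambda$-module structure of $X_{\widetilde{k}}$. You attribute a lower bound to the capitulation argument and an upper bound to the cyclic presentation; both are the wrong way round. Beyond the labels, this reversal matters because it leads you to misidentify where the real work lies.

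You locate the difficulty in the decomposition/ramification bookkeeping over $H$, but that part is a routine transcription of Proposition~\ref{rank uper} and yields the upper bound $\mathrm{rank}_{\mathbb{Z}_p}\mathrm{Gal}(M_{\mathfrak{p}^\ast}(H_\infty)/\widetilde{k}) \le p^{n_0}-1$ with no new idea. The actual content of Section~\ref{rational case} is the \emph{lower} bound, and it does \emph{not} come directly from the description of $f(S,T)$ in Lemma~\ref{f(S,T)}: that power series is monic in $T$, adapted to $k_\infty^c$, and useless for computing coinvariants by $\mathrm{Gal}(\widetilde{k}/H_\infty)=\overline{\langle\rho^p\rangle}$ with $\rho=\sigma^{up^s}\tau$. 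The paper therefore proceeds in two stages. First it combines the upper bound of Proposition~\ref{rank uper} with the lower-bound machinery of Section~\ref{B}, still at the level of $N_\infty$, to obtain the \emph{equality} $(X_{\widetilde{k}})_{\mathrm{Gal}(\widetilde{k}/N_\infty)}\cong\mathbb{Z}_p[[S]]/\nu_{n_0}(S)$ of $\mathbb{Z}_p$-rank exactly $p^{n_0}-1$ (Proposition~\ref{Zprank}). Second, it uses Nakayama on this exact rank to construct a \emph{new} annihilator $q(S,T_s)$, monic in $S$ of degree $p^{n_0}-1$ with coefficients in $\mathbb{Z}_p[[T_s]]$ (Lemma~\ref{q(S,V)}), and proves $(q)=(f)$ (Lemma~\ref{q=f}). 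Only with this change of variables is $\Lambda/(q,(1+T_s)^p-1)\cong\mathbb{Z}_p^{\,p(p^{n_0}-1)}$ immediate (Proposition~\ref{Zprank2}), and $p(p^{n_0}-1)>p^{n_0}-1$ gives the contradiction. Your proposal omits this change of presentation, and without it the lower bound you need is not accessible from the shape of $f$ you invoke.
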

There is a relation between weak GGC and GGC.
Fujii verified that weak {\rm{GGC}} for $p$ and $k$ implies {\rm{GGC}} for $p$ and $k$
if the characteristic ideal of
the $\mathrm{Gal}(\widetilde{k}/k_{\infty}^c)$-coinvariant module of $X_{\widetilde{k}}$
as a $\mathbb{Z}_p[[\mathrm{Gal}(k_{\infty}^c/k)]]$-module
is a prime ideal (\cite[Proposition $3.1$]{Fuj10}).
Hence we obtain the following corollary from Theorems \ref{main thm} and \ref{main thm 2}.
\begin{cor} \label{cor GGC}
Assume either the assumption of Theorem \ref{main thm} or that of Theorem \ref{main thm 2} holds.
Assume also that the characteristic ideal of
the $\mathrm{Gal}(\widetilde{k}/k_{\infty}^c)$-coinvariant module of $X_{\widetilde{k}}$
as a $\mathbb{Z}_p[[\mathrm{Gal}(k_{\infty}^c/k)]]$-module
is a prime ideal.
Then {\rm{GGC}} holds for $p$ and $k$.
\end{cor}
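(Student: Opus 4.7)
The plan is to combine the two main theorems of the paper with Fujii's criterion \cite[Proposition $3.1$]{Fuj10}, which is already cited in the paragraph immediately preceding the corollary. The argument is essentially a short deduction, so the proof will be brief.

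First I would split into two cases according to whether $X_{\widetilde{k}}$ is trivial. If $X_{\widetilde{k}} = 0$, then its annihilator is the whole ring $\mathbb{Z}_p[[\mathrm{Gal}(\widetilde{k}/k)]]$, which has height greater than one, so GGC holds trivially for $p$ and $k$; in this case neither the hypothesis on the Iwasawa $\lambda$-invariant nor the condition on the characteristic ideal of the coinvariant module needs to be invoked.

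In the remaining case, $X_{\widetilde{k}}$ is non-trivial. Under the assumption of Theorem \ref{main thm} (respectively Theorem \ref{main thm 2}), weak GGC holds for $p$ and $k$, so $X_{\widetilde{k}}$ contains a non-trivial pseudo-null $\mathbb{Z}_p[[\mathrm{Gal}(\widetilde{k}/k)]]$-submodule. Combined with the assumption that the characteristic ideal of the $\mathrm{Gal}(\widetilde{k}/k_{\infty}^c)$-coinvariant module of $X_{\widetilde{k}}$ as a $\mathbb{Z}_p[[\mathrm{Gal}(k_{\infty}^c/k)]]$-module is a prime ideal, Fujii's result \cite[Proposition $3.1$]{Fuj10} applies directly and yields that $X_{\widetilde{k}}$ itself is pseudo-null over $\mathbb{Z}_p[[\mathrm{Gal}(\widetilde{k}/k)]]$, which is exactly GGC for $p$ and $k$.

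There is no real obstacle in this argument since all the heavy lifting is contained in Theorem \ref{main thm}, Theorem \ref{main thm 2}, and the cited result of Fujii. The only thing to be careful about is the trivial case $X_{\widetilde{k}} = 0$, which weak GGC does not literally cover (its statement assumes non-triviality); once this case is separated the remaining implication is immediate.
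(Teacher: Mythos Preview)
Your proposal is correct and matches the paper's approach exactly: the paper does not give a separate proof of this corollary, stating only that it follows from Theorems~\ref{main thm} and~\ref{main thm 2} together with Fujii's result \cite[Proposition~3.1]{Fuj10}. Your explicit handling of the case $X_{\widetilde{k}}=0$ is a reasonable precaution, though note that the hypothesis that the characteristic ideal of $(X_{\widetilde{k}})_{\mathrm{Gal}(\widetilde{k}/k_{\infty}^c)}$ is prime already forces this module, and hence $X_{\widetilde{k}}$, to be non-trivial.
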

Furthermore, we apply Theorem \ref{main thm} and Corollary \ref{cor GGC}
to imaginary quadratic fields
$k=\mathbb{Q}(\sqrt{-d})$.
Fujii and Shirakawa gave criteria of  GGC
for imaginary quadratic fields (\cite[Theorem 5.1]{Fuj10}, \cite[Theorem 5.2]{Shi}).
If $1< d < 10^3$
and $d \equiv 2 ~\mathrm{mod}~3$,
they verified the conjecture
using the first and the second layers of a certain $\mathbb{Z}_p$-extension
except for $d=971$.
Combining Corollary \ref{cor GGC} with Fujii's criterion,
we can verify the conjecture including the case of $d=971$.
We will give more precise calculation in Section \ref{example}.
Therefore we obtain the following
\begin{cor} \label{thm GGC}
Let $d$ be a positive integer and
$k=\mathbb{Q}(\sqrt{-d})$ an imaginary quadratic field in which $p$ splits.
If $1< d < 10^3$ and $d \equiv 2 ~\mathrm{mod}~3$, then
{\rm{GGC}} holds for $p=3$ and $k$.
\end{cor}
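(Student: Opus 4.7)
The plan is to split the range $1 < d < 10^3$ with $d \equiv 2 \pmod{3}$ into two subsets. For every such $d$ except $d = 971$, GGC has already been established by Fujii \cite{Fuj10} and Shirakawa \cite{Shi} via a criterion testable through class-number computations at the first and second layers of a certain $\mathbb{Z}_3$-extension of $k = \mathbb{Q}(\sqrt{-d})$. Hence the only new work is to verify GGC for the single exceptional value $d = 971$.

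For $k = \mathbb{Q}(\sqrt{-971})$, I would invoke Corollary \ref{cor GGC}, which reduces GGC to weak GGC together with the primality of the characteristic ideal of the $\mathrm{Gal}(\widetilde{k}/k_\infty^c)$-coinvariant module of $X_{\widetilde{k}}$ as a $\mathbb{Z}_3[[\mathrm{Gal}(k_\infty^c/k)]]$-module. First I would determine whether $k$ is $3$-split $3$-rational using Proposition \ref{eqrational}; this choice dictates whether Theorem \ref{main thm} or Theorem \ref{main thm 2} supplies weak GGC. Next I would verify that the appropriate Iwasawa $\lambda$-invariant vanishes ($\lambda(N_\infty/k) = 0$ in the non-rational case, $\lambda(H_\infty/H) = 0$ in the rational case), drawing on the tables and methods of Fukuda-Komatsu \cite{F-K1, F-K2}. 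In the non-rational case I would also check that the characteristic ideal of $X_{k_\infty^c}$ admits a square-free generator, which reduces to ruling out repeated roots of the associated Iwasawa polynomial; this can be read off from $3$-adic $L$-value computations for $k$.

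Finally, I would verify the primality hypothesis on the coinvariant characteristic ideal of $X_{\widetilde{k}}$. This is presumably where the direct Fujii-Shirakawa test stalls at $d = 971$: their layer-by-layer class-number approach does not force the primality conclusion in this isolated case. The hard part of the plan will therefore be extracting the primality from a finer analysis of the two-variable characteristic ideal at $d = 971$, typically by identifying a single irreducible distinguished polynomial generating it; once that is in hand, Corollary \ref{cor GGC} closes the gap. The concrete numerics and the explicit handling of $d = 971$ are carried out in Section \ref{example}.
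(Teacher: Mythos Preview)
Your overall plan matches the paper's: Fujii--Shirakawa handle every admissible $d<10^3$ except $d=971$, and for $d=971$ one invokes Corollary~\ref{cor GGC}. However, you misplace the weight of the argument. For $k=\mathbb{Q}(\sqrt{-971})$ one computes $h(T)\equiv T^2+64638T\bmod 3^{11}$, so $\lambda(k_\infty^c/k)=2$ and hence $\lambda^\ast=1$. The characteristic ideal of $(X_{\widetilde{k}})_{\mathrm{Gal}(\widetilde{k}/k_\infty^c)}$ is then generated by the linear polynomial $f(0,T)$, which is automatically prime; no ``finer analysis of the two-variable characteristic ideal'' is needed. The same computation shows $h(T)=T(T+\alpha)$ with $\mathrm{ord}_3(\alpha)=5$, giving the square-free condition for free.

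The substantive step is verifying the hypotheses of Theorem~\ref{main thm}. Rather than appealing to Fukuda--Komatsu tables, the paper uses the shortcut of Proposition~\ref{ex1}: one checks via Lemma~\ref{brink} that the class number of $\mathbb{Q}(\sqrt{3\cdot 971})$ is prime to $3$, so $L_k\subset\widetilde{k}$; this forces $\lambda(N_\infty/k)=0$ by the genus formula and shows $k$ is not $3$-split $3$-rational (since $\mathrm{ord}_3(g_0(0))=5>1=[L_k:k]$). Thus Theorem~\ref{main thm} applies directly. Your route would also work, but the paper's is shorter and avoids the case split on $p$-rationality.
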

Concerning GGC, the following is known.
Let $p$ be an odd prime number.
Minardi \cite[Proposition $3.\mathrm{B}$]{Mi} verified GGC for an imaginary quadratic field $k$ in which $p$ splits
under the assumption that
the class number of $k$ is not divisible by $p$.
This result is generalized by Itoh \cite{Ito} and Fujii \cite{Fuj17}.
They verified the conjecture for imaginary abelian quartic fields (\cite{Ito})
and for CM-fields of degree greater than or equal to $4$ (\cite{Fuj17})
under
the assumption that $p$ splits completely in the base field and that
the class number of the base field is not divisible by $p$ , and
that all Iwasawa invariants of the cyclotomic $\mathbb{Z}_p$-extension over the maximal totally real subfield of the base field
are trivial.
Using a method of Minardi in \cite{Mi},
Ozaki \cite[Theorem B]{Oz}
proved GGC under the assumption that
at least one prime ideal lying above $p$ does not split in $N_{\infty}$
and that the Iwasawa $\lambda$-invariant of $N_{\infty}/k$ is zero
for an imaginary quadratic field $k$ in which $p$ splits.
GGC has some important consequences.
For example, if this conjecture holds for a totally real number field $k$, then
this conjecture gives a simple proof of
the usual form of Iwasawa's main conjecture for $p$ and $k$ (\cite{Gre2}).
Ozaki \cite{Oz} and Kataoka \cite{Kata17} studied the behavior of Iwasawa $\lambda$- and $\mu$-invariants
of infinitely many $\mathbb{Z}_p$-extensions over a fixed number field.
They determined many Iwasawa $\lambda$- and
$\mu$-invariants of $\mathbb{Z}_p$-extensions
under the assumption that GGC holds for the base field.
\par
The outline of this paper is as follows.
In Section \ref{Preli}, we provide some basic definitions and notation.
In Section \ref{decomp}, we study the decomposition of the prime number $p$
in $\widetilde{k}/\mathbb{Q}$.
Furthermore, in the end of this section,
we give two inequalities $(\mathrm{A})$ and $(\mathrm{B})$,
which imply Theorem \ref{main thm}.
These inequalities give an upper bound and a lower bound of the $\mathbb{Z}_p$-rank
of a certain Galois group.
In Sections \ref{A} and \ref{B},
we give the proof of the inequalities (A) and (B), respectively,
assuming that weak GGC does not hold for $p$ and $k$.
In Section \ref{rational case}, we consider the case where $k$ is a $p$-split $p$-rational and prove
Theorem \ref{main thm 2}.
In Section \ref{example}, we introduce some numerical examples including $p=3$ and $k=\mathbb{Q}(\sqrt{-971})$. 
\section{Preliminary}\label{Preli}
\subsection~
Let $p$ be an odd prime number and $k$ an imaginary quadratic field in which $p$ splits
into $\mathfrak{p}$ and $\mathfrak{p}^{\ast}$.
Let $K$ be a $\mathbb{Z}_p$-extension or the $\mathbb{Z}_p^{\oplus 2}$-extension of $k$.
We denote by $L_{K}/K$ the maximal unramified pro-$p$ abelian extension
and put $X_K=\mathrm{Gal}(L_{K}/K) $.
Since the Galois group $\mathrm{Gal}(K/k)$ acts naturally on $X_{K}$,
it becomes a $\mathbb{Z}_p[[\mathrm{Gal}(K/k)]]$-module.
It is known that $X_{K}$ is a finitely generated torsion $\mathbb{Z}_p[[\mathrm{Gal}(K/k) ]]$-module
(\cite{Gre}, \cite{Iw}).\par
Since we have $\mathrm{Gal}(\widetilde{k}/k) \cong \mathbb{Z}_p^{\oplus 2}$,
$k$ has two independent $\mathbb{Z}_p$-extensions.
For example, the cyclotomic $\mathbb{Z}_p$-extension $k_{\infty}^c$ and the anti-cyclotomic $\mathbb{Z}_p$-extension $k_{\infty}^a$
are disjoint over $k$ and satisfy $\widetilde{k}=k_{\infty}^ck_{\infty}^a$.
Furthermore, since we suppose that $p$ splits in $k$,
there exist two $\mathbb{Z}_p$-extensions in which one of the prime ideals of $k$ lying above $p$ does not ramify.
Let $N_{\infty}/k$ (respectively, $N_{\infty}^{\ast}/k$) be the $\mathbb{Z}_p$-extension of $k$
in which $\mathfrak{p}^{\ast}$ (respectively, $\mathfrak{p}$) does not ramify.
We note that $N_{\infty}$ (respectively, $N_{\infty}^{\ast}$) coincides with the fixed field of $\widetilde{k}$ by
the inertia subgroup of $\mathrm{Gal}(\widetilde{k}/k)$ for the prime ideal $\mathfrak{p}^{\ast}$ (respectively, $\mathfrak{p}$) of $k$
(\cite[Lemma 3.2]{Mi}).\par
Let $\sigma$ and $\tau $ be topological generators of $\mathrm{Gal}(\widetilde{k}/k_{\infty}^c)$ and $\mathrm{Gal}(\widetilde{k}/k_{\infty}^a)$, respectively.
By the isomorphism
$$
\mathrm{Gal}(\widetilde{k}/k) \cong \mathrm{Gal}(\widetilde{k}/k_{\infty}^c) \times \mathrm{Gal}(\widetilde{k}/k_{\infty}^a),
$$
we fix an isomorphism 
\begin{eqnarray}
\mathbb{Z}_p[[ \mathrm{Gal}(\widetilde{k}/k)]]  \cong \mathbb{Z}_p[[S,T]] \qquad  (\sigma \leftrightarrow 1+S, ~\tau \leftrightarrow 1+T). \label{isom ring}
\end{eqnarray}
We put $\Lambda=\mathbb{Z}_p[[S,T]]$. By this isomorphism, we regard $X_{\widetilde{k}}$ as a $\Lambda$-module.
We note that $\Lambda$ is a unique factorization domain and 
a noetherian local integral domain with the maximal ideal $(S,T,p)$.\par
\subsection~
For a ring $R$, we denote by $R^{\times}$ the unit group of $R$.
We suppose that $R$ is the formal power series ring in one variable
over a discrete valuation ring or suppose that
$R$ is the formal power series ring $\Lambda$.
For a finitely generated torsion $R$-module $M$,
we define the characteristic ideal of $M$ as follows.
By the structure theorem of $R$-modules (\cite[Proposition 5.1.7]{NSW}), there exists an $R$-homomorphism 
\[
M \rightarrow \bigoplus_{i=1}^{l} R/{\mathfrak{p}_i}^{m_i}
\]
with pseudo-null kernel and pseudo-null cokernel,
where $\mathfrak{p}_i$'s are prime ideals of height one and
$l$ is a non-negative integer, and $m_i$'s are positive integers.
Here, for an $R$-module $P$,
$P$ is said to be pseudo-null if
there are two relatively prime annihilators of $P$.
We note that a pseudo-null $R$-module is an $R$-module of finite length
if $R$ is the formal power series ring in one variable
over a discrete valuation ring.
Then, we define the characteristic ideal of $M$ by
\[
\mathrm{char}_{R}(M)=\left( \prod_{i=1}^{l} {\mathfrak{p}_i}^{m_i} \right),
\]
which is an ideal in $R$.\par
Let $G$ be a profinite group. For any topological $G$-module $M$, we denote by $M^{G}$
the subset of elements of $M$ invariant under the action of $G$.
We also denote by $M_{G}$ the largest quotient module of $M$ on which $G$ acts trivially, namely,
$$
M_{G}= M/M', \quad M'= \overline{\langle (g-1)m~|~g \in G, m \in M \rangle },
$$
where $\overline{\langle (g-1)m~|~g \in G, m \in M \rangle }$ is the topological closure of $\langle (g-1)m~|~g \in G, m \in M \rangle $ in $M$.
\section{The decomposition of the prime $p$}\label{decomp}
In this section, we roughly describe the decomposition of the prime $p$ in $\mathrm{Gal}(\widetilde{k}/\mathbb{Q})$.
We first have the following
\begin{lem}[{\rm{See for example} \cite[Lemma 1]{Oz}}] \label{Ozaki}
Let $k$ be an imaginary quadratic field in which $p$ splits and
$k_{\infty}$ a $\mathbb{Z}_p$-extension different from $N_{\infty}$ and $N_{\infty}^{\ast}$.
Then there exists an exact sequence
of $\mathbb{Z}_p[[\mathrm{Gal}(k_{\infty}/k)]]$-modules:
$$
0 \rightarrow (X_{\widetilde{k}})_{\mathrm{Gal}(\widetilde{k}/k_{\infty})}
\rightarrow X_{k_{\infty}} \rightarrow
\mathrm{Gal}(\widetilde{k}/k_{\infty}) \rightarrow 0.
$$
\end{lem}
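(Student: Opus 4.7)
The plan is to exhibit $L_{k_\infty}$ as the maximal subfield of $L_{\widetilde{k}}$ that is abelian over $k_\infty$, and then read off the desired exact sequence from the Hochschild--Serre five-term exact sequence applied to the group extension $1 \to X_{\widetilde{k}} \to \mathrm{Gal}(L_{\widetilde{k}}/k_\infty) \to \Gamma \to 1$, where $\Gamma := \mathrm{Gal}(\widetilde{k}/k_\infty)$.

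First I would verify that $\widetilde{k}/k_\infty$ is everywhere unramified. Only primes above $p$ can ramify in a $\mathbb{Z}_p$-extension of a number field, so it suffices to treat primes above $\mathfrak{p}$ and $\mathfrak{p}^{\ast}$. Inside $\mathrm{Gal}(\widetilde{k}/k) \cong \mathbb{Z}_p^{\oplus 2}$, the inertia group $I_{\mathfrak{p}}$ and the subgroup $\Gamma$ are both saturated rank-one subgroups, since the quotients $\mathrm{Gal}(\widetilde{k}/k)/I_{\mathfrak{p}} \cong \mathrm{Gal}(N_\infty^{\ast}/k)$ and $\mathrm{Gal}(\widetilde{k}/k)/\Gamma \cong \mathrm{Gal}(k_\infty/k)$ are both isomorphic to $\mathbb{Z}_p$. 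The assumption $k_\infty \neq N_\infty^{\ast}$ forces $\Gamma \neq I_{\mathfrak{p}}$, and any two distinct saturated rank-one subgroups of $\mathbb{Z}_p^{\oplus 2}$ meet trivially, so the inertia group $I_{\mathfrak{p}} \cap \Gamma$ of a prime above $\mathfrak{p}$ in $\widetilde{k}/k_\infty$ vanishes; symmetrically $I_{\mathfrak{p}^{\ast}} \cap \Gamma = 0$ using $k_\infty \neq N_\infty$.

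From this I would deduce the tower $\widetilde{k} \subset L_{k_\infty} \subset L_{\widetilde{k}}$ (the first inclusion because $\widetilde{k}/k_\infty$ is unramified abelian pro-$p$, the second because $L_{k_\infty}/\widetilde{k}$, being a subextension of $L_{k_\infty}/k_\infty$, is abelian and unramified) and, more importantly, that $L_{k_\infty}$ is the maximal subfield $M \subset L_{\widetilde{k}}$ with $M/k_\infty$ abelian: any such $M$ is unramified over $k_\infty$ (compose unramification of $L_{\widetilde{k}}/\widetilde{k}$ with that of $\widetilde{k}/k_\infty$) and pro-$p$, hence sits inside $L_{k_\infty}$.

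Setting $G := \mathrm{Gal}(L_{\widetilde{k}}/k_\infty)$, the previous identification reads $G^{\mathrm{ab}} = X_{k_\infty}$, and Hochschild--Serre applied to $1 \to X_{\widetilde{k}} \to G \to \Gamma \to 1$ yields
\[
H_2(\Gamma, \mathbb{Z}_p) \longrightarrow (X_{\widetilde{k}})_\Gamma \longrightarrow X_{k_\infty} \longrightarrow \Gamma \longrightarrow 0.
\]
Since $\Gamma \cong \mathbb{Z}_p$ is a free pro-$p$ group of rank one, the leftmost term vanishes, producing the sought short exact sequence; compatibility with the $\mathbb{Z}_p[[\mathrm{Gal}(k_\infty/k)]]$-module structures is automatic, as all the maps arise from conjugation inside $\mathrm{Gal}(L_{\widetilde{k}}/k)$. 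The most delicate point is the injectivity of $(X_{\widetilde{k}})_\Gamma \hookrightarrow X_{k_\infty}$, which rests on this $H_2$-vanishing; by comparison, the ramification analysis and the characterization of $L_{k_\infty}$ as the maximal abelian-over-$k_\infty$ subextension are routine.
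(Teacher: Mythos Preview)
Your argument is correct and is the standard one. The paper itself does not give a proof of this lemma; it simply cites \cite[Lemma~1]{Oz}. Your write-up recovers exactly that argument: (i) the hypothesis $k_\infty\neq N_\infty,N_\infty^{\ast}$ is used precisely to force $\Gamma\cap I_{\mathfrak{p}}=\Gamma\cap I_{\mathfrak{p}^{\ast}}=0$ so that $\widetilde{k}/k_\infty$ is everywhere unramified; (ii) this makes $L_{k_\infty}$ the maximal abelian-over-$k_\infty$ subfield of $L_{\widetilde{k}}$, i.e.\ $G^{\mathrm{ab}}=X_{k_\infty}$ for $G=\mathrm{Gal}(L_{\widetilde{k}}/k_\infty)$; (iii) injectivity of $(X_{\widetilde{k}})_\Gamma\to X_{k_\infty}$ follows from $H_2(\Gamma,\mathbb{Z}_p)=0$, or equivalently from the fact that when $\Gamma$ is procyclic with topological generator $\gamma$ and $X_{\widetilde{k}}$ is abelian one has $\overline{[G,G]}=(\gamma-1)X_{\widetilde{k}}$. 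Nothing to fix.
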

For a $\mathbb{Z}_p$-extension $k_{\infty}/k$, we denote by $\lambda(k_{\infty}/k)$, $\mu(k_{\infty}/k)$
the Iwasawa $\lambda$-invariant, $\mu$-invariant of $k_{\infty}/k$, respectively.
Since we suppose that $p$ splits in $k$,
we have $\lambda(k_{\infty}^c/k) \geq 1$.
\begin{lem}\label{trivial}
The Iwasawa module $X_{\widetilde{k}}$ is trivial if and only if $\lambda(k_{\infty}^c/k)=1$.
\end{lem}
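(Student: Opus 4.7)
The plan is to specialize Lemma~\ref{Ozaki} to the cyclotomic $\mathbb{Z}_p$-extension $k_\infty^c$, which is distinct from both $N_\infty$ and $N_\infty^\ast$ (both $\mathfrak{p}$ and $\mathfrak{p}^\ast$ are ramified in $k_\infty^c/k$). This yields the exact sequence of $\mathbb{Z}_p[[T]]$-modules
$$0 \to (X_{\widetilde{k}})_{\mathrm{Gal}(\widetilde{k}/k_\infty^c)} \to X_{k_\infty^c} \to \mathrm{Gal}(\widetilde{k}/k_\infty^c) \to 0,$$
in which the right-hand quotient $\mathrm{Gal}(\widetilde{k}/k_\infty^c)\cong\mathbb{Z}_p$ carries a trivial $\mathrm{Gal}(k_\infty^c/k)$-action (because $\widetilde{k}/k$ is abelian), and hence contributes $\lambda=1$ and $\mu=0$ to the invariants of $X_{k_\infty^c}$.

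For the ``only if'' direction, the vanishing of $X_{\widetilde{k}}$ makes the left-hand term of the sequence trivial, so $X_{k_\infty^c}\cong\mathbb{Z}_p$ and $\lambda(k_\infty^c/k)=1$. For the converse, I would assume $\lambda(k_\infty^c/k)=1$ and invoke the Ferrero--Washington theorem (applicable since $k/\mathbb{Q}$ is abelian) to get $\mu(k_\infty^c/k)=0$. Comparing $\lambda$- and $\mu$-invariants along the exact sequence then forces $(X_{\widetilde{k}})_{\mathrm{Gal}(\widetilde{k}/k_\infty^c)}$ to have both invariants equal to $0$, so it is finite as a $\mathbb{Z}_p[[T]]$-module.

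At this point the finite module $(X_{\widetilde{k}})_{\mathrm{Gal}(\widetilde{k}/k_\infty^c)}$ embeds as a $\mathbb{Z}_p[[T]]$-submodule of $X_{k_\infty^c}$; since $p$ ramifies in $k_\infty^c/k$, Greenberg's classical theorem ensures that $X_{k_\infty^c}$ contains no nontrivial finite $\mathbb{Z}_p[[T]]$-submodule, forcing the coinvariants $X_{\widetilde{k}}/SX_{\widetilde{k}}$ to vanish. Topological Nakayama applied to the finitely generated $\Lambda$-module $X_{\widetilde{k}}$, with $S$ lying in the maximal ideal $(p,S,T)$ of the local ring $\Lambda$, then yields $X_{\widetilde{k}}=0$. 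The only delicate ingredient is the invocation of Greenberg's no-finite-submodule theorem for $X_{k_\infty^c}$; the rest is formal bookkeeping with Iwasawa invariants and a standard application of Nakayama's lemma.
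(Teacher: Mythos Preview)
Your proof is correct and follows essentially the same route as the paper: both specialize Lemma~\ref{Ozaki} to $k_\infty^c$, use the no-finite-submodule result for $X_{k_\infty^c}$ (the paper cites \cite[Corollary~13.29]{Wa}) to kill the coinvariants, and finish with Nakayama. The only difference is that you make the appeal to Ferrero--Washington for $\mu(k_\infty^c/k)=0$ explicit, whereas the paper leaves this implicit; your version is arguably cleaner on that point.
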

\begin{proof}
In the case of $\lambda(k_{\infty}^c/k)=1$,
we have $(X_{\widetilde{k}})_{\mathrm{Gal}(\widetilde{k}/k_{\infty}^c)}=0$ using Lemma \ref{Ozaki} and using the fact that
$X_{k_{\infty}^c}$ has no non-trivial finite $\mathbb{Z}_p[[\mathrm{Gal}(k_{\infty}^c/k)]]$-submodule (\cite[Corollary 13.29]{Wa}).
By Nakayama's lemma, we obtain $X_{\widetilde{k}}=0$.
If we suppose that $X_{\widetilde{k}}=0$, then we have $X_{k_{\infty}^c} \cong \mathrm{Gal}(\widetilde{k}/k_{\infty}^c) \cong \mathbb{Z}_p$.
This implies that $\lambda(k_{\infty}^c/k)=1$. Thus we get the conclusion.
\end{proof}
We put $\lambda^{*}=\lambda(k_{\infty}^c/k) -1$.
We obtain an annihilator of $X_{\widetilde{k}}$ from the following 
\begin{lem}[{\cite[Lemma 3.3]{Mu}}]\label{f(S,T)}
Suppose that $\lambda^{\ast}\geq 1$, where $\lambda^{\ast}$ is the integer defined above.
Then there exist power series $f(S,T) \in \mathrm{Ann}_{\Lambda}(X_{\widetilde{k}})$ and
$g_i(S) \in \mathbb{Z}_p[[S]]$ $(i=0, \dots , \lambda^{\ast}-1)$
such that
$$
f(S,T) = T^{\lambda^{\ast}} + g_{\lambda^{\ast}-1}(S) T^{\lambda^{\ast}-1} + \dots + g_{1}(S)T +g_{0}(S).
$$
\end{lem}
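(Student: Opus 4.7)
The plan is to exhibit $f(S,T)$ as the Cayley--Hamilton polynomial of multiplication by $T$ on $X_{\widetilde{k}}$, viewed as a finitely generated $\mathbb{Z}_p[[S]]$-module whose minimal number of generators turns out to be exactly $\lambda^{\ast}$. First I would apply Lemma~\ref{Ozaki} to the cyclotomic $\mathbb{Z}_p$-extension $k_{\infty}^{c}$, which is distinct from $N_{\infty}$ and $N_{\infty}^{\ast}$ since $p$ ramifies in $k_{\infty}^{c}/k$. Under the identification $\sigma\leftrightarrow 1+S$, the coinvariants become $X_{\widetilde{k}}/SX_{\widetilde{k}}$, and since $\mathrm{Gal}(\widetilde{k}/k)$ is abelian, $\tau$ acts trivially by conjugation on $\mathrm{Gal}(\widetilde{k}/k_{\infty}^{c})\cong\mathbb{Z}_p$, so the latter becomes $\mathbb{Z}_p[[T]]/(T)$ as a $\mathbb{Z}_p[[T]]$-module. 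Lemma~\ref{Ozaki} then yields an exact sequence
$$
0 \to X_{\widetilde{k}}/SX_{\widetilde{k}} \to X_{k_{\infty}^{c}} \to \mathbb{Z}_p \to 0
$$
of $\mathbb{Z}_p[[T]]$-modules.

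Since $k$ is abelian, Ferrero--Washington gives $\mu(k_{\infty}^{c}/k)=0$, whence $X_{k_{\infty}^{c}}$ is finitely generated over $\mathbb{Z}_p$ of rank $\lambda(k_{\infty}^{c}/k)=\lambda^{\ast}+1$. Combined with the absence of non-trivial finite $\mathbb{Z}_p[[T]]$-submodules (\cite[Corollary 13.29]{Wa}) --- any $\mathbb{Z}_p$-torsion element would generate such a submodule --- this forces $X_{k_{\infty}^{c}}$ to be $\mathbb{Z}_p$-free of rank $\lambda^{\ast}+1$, so the $\mathbb{Z}_p$-submodule $X_{\widetilde{k}}/SX_{\widetilde{k}}$ is $\mathbb{Z}_p$-free of rank $\lambda^{\ast}$. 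Applying the compact Nakayama lemma to $X_{\widetilde{k}}$ over the complete local ring $(\mathbb{Z}_p[[S]],(p,S))$ and using
$$
X_{\widetilde{k}}/(p,S)X_{\widetilde{k}} \;\cong\; (X_{\widetilde{k}}/SX_{\widetilde{k}})\otimes_{\mathbb{Z}_p}\mathbb{F}_p \;\cong\; \mathbb{F}_p^{\lambda^{\ast}},
$$
one concludes that $X_{\widetilde{k}}$ is generated over $\mathbb{Z}_p[[S]]$ by $\lambda^{\ast}$ elements. The classical Cayley--Hamilton theorem for commutative rings, applied to the $\mathbb{Z}_p[[S]]$-linear endomorphism given by multiplication by $T$, then produces $g_{0}(S),\dots,g_{\lambda^{\ast}-1}(S)\in\mathbb{Z}_p[[S]]$ with
$$
f(S,T)\;=\;T^{\lambda^{\ast}}+g_{\lambda^{\ast}-1}(S)T^{\lambda^{\ast}-1}+\dots+g_{1}(S)T+g_{0}(S)\;\in\;\mathrm{Ann}_{\Lambda}(X_{\widetilde{k}}),
$$
as required.

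The main delicate point is the transition to a $\mathbb{Z}_p[[S]]$-module structure of the correct generator count. One must verify both that Ferrero--Washington and the absence of finite submodules together force $\mathbb{Z}_p$-freeness of $X_{k_{\infty}^{c}}$ (otherwise $X_{\widetilde{k}}/SX_{\widetilde{k}}$ need not even be finitely generated over $\mathbb{Z}_p$, and the Cayley--Hamilton step could not produce a polynomial of controlled degree), and that the compact Nakayama lemma is being invoked with respect to the full maximal ideal $(p,S)$ rather than $(S)$, so that the minimal $\mathbb{Z}_p[[S]]$-generator count of $X_{\widetilde{k}}$ matches the $\mathbb{F}_p$-dimension $\lambda^{\ast}$ of the residue quotient.
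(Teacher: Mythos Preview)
Your proof is correct and follows essentially the same route as the paper's: reduce modulo $S$ via Lemma~\ref{Ozaki}, use $\mu(k_{\infty}^{c}/k)=0$ together with the absence of finite submodules to get $X_{\widetilde{k}}/SX_{\widetilde{k}}\cong\mathbb{Z}_p^{\oplus\lambda^{\ast}}$, lift generators by Nakayama, and take the characteristic polynomial of $T$-multiplication. The paper writes out the relation matrix $A=(T\delta_{ij}-f_{ij}(S))$ explicitly and sets $f(S,T)=\det A$ rather than naming Cayley--Hamilton, but the content is identical.
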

\begin{proof}
We note that
$X_{k_{\infty}^c}$ has no non-trivial finite $\mathbb{Z}_p[[\mathrm{Gal}(k_{\infty}^c/k)]]$-submodule.
Applying Lemma  \ref{Ozaki} to the case of $k_{\infty}=k_{\infty}^c$,
we have
$$
  X_{\widetilde{k}}/SX_{\widetilde{k}} \cong (X_{\widetilde{k}})_{\mathrm{Gal}(\widetilde{k}/k_{\infty}^c)} \cong \mathbb{Z}_p^{\oplus \lambda^{\ast} }
$$
by the isomorphism (\ref{isom ring}).
Using Nakayama's lemma, there exist $x_i \in X_{\widetilde{k}} ~(i=1, \dots, \lambda^{\ast})$ such that $X_{\widetilde{k}}= \langle x_1, \dots, x_{\lambda^{\ast}}
\rangle_{\mathbb{Z}_p[[S]]}$.
Then there exist $f_{ij}(S) \in \mathbb{Z}_p[[S]]~(i,j=1,\dots , \lambda^{\ast})$ such that
\begin{eqnarray*}
Tx_{i}= \sum_{j=1}^{\lambda^{\ast}} f_{ij}(S) x_j , \quad (i=1,\dots , \lambda^{\ast}).
\end{eqnarray*}
By these relations, we have the following matrix
\begin{eqnarray*}
A=
~\left(
\begin{array}{ccccc}
T-f_{11}(S) &  -f_{12}(S) & \dots & -f_{1 \lambda^{\ast}}(S)\\
-f_{21}(S) &T-f_{22}(S) &  \dots & -f_{2 \lambda^{\ast}}(S) \\
  \dots &  \dots  &   \dots      &  \dots \\
-f_{\lambda^{\ast} 1}(S)  &  -f_{\lambda^{\ast} 2}(S) & \dots & T-f_{\lambda^{\ast} \lambda^{\ast}}(S)
\end{array}
\right).
\end{eqnarray*}
We denote by $\mathrm{det}(A)$ the determinant of the matrix $A$.
We put $f(S,T)=\mathrm{det}(A)$. Then we obtain
$$
f(S,T) = T^{\lambda^{\ast}} + g_{\lambda^{\ast}-1}(S) T^{\lambda^{\ast}-1} + \dots + g_{1}(S)T +g_{0}(S)
$$
for some $g_i(S) \in \mathbb{Z}_p[[S]]$ $(i=0, \dots , \lambda^{\ast}-1)$.
It is easy to see that $f(S,T)X_{\widetilde{k}}=0$.
Thus we get the conclusion.
\end{proof}
\begin{Rem}\label{uni of f}
\begin{rm}
The uniqueness of the power series $f(S,T)$ is not known.
In this paper, we fix this power series in the case where $\lambda(k_{\infty}^c/k) \geq 2$.
\end{rm}
\end{Rem}
We fix an isomorphism
\begin{eqnarray}\label{isom cyc}
\mathbb{Z}_p[[ \mathrm{Gal}(k_{\infty}^c/k)]]  \cong \mathbb{Z}_p[[T]] \qquad
(\tau \mathrm{Gal}(\widetilde{k}/k_{\infty}^c)  \leftrightarrow 1+T).
\end{eqnarray}
By this isomorphism, we identify these rings.\par
There exists a relation between the power series $f(S,T)$
and the characteristic ideal of $X_{k_{\infty}^c}$ as follows.
\begin{prop}\label{char cyc}
Assume that the characteristic ideal 
of $X_{k_{\infty}^c}$
has a generator which is square-free
or $X_{\widetilde{k}}$ is cyclic as a $\Lambda$-module.
Suppose that $\lambda(k_{\infty}^c/k) \geq 2$.
Then we have
\[
\mathrm{char}_{\mathbb{Z}_p[[T]]} (X_{k_{\infty}^c})
= (Tf(0,T)),
\]
where $f(S,T)$ is the same power series defined in Lemma \ref{f(S,T)}.
\end{prop}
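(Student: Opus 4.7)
The plan is to apply Lemma \ref{Ozaki} with $k_{\infty}=k_{\infty}^c$ to obtain the short exact sequence of $\mathbb{Z}_p[[T]]$-modules
\[
0 \to (X_{\widetilde{k}})_{\mathrm{Gal}(\widetilde{k}/k_{\infty}^c)} \to X_{k_{\infty}^c} \to \mathrm{Gal}(\widetilde{k}/k_{\infty}^c) \to 0,
\]
to compute the characteristic ideal of each outer term, and to conclude by the multiplicativity of characteristic ideals in short exact sequences of finitely generated torsion $\mathbb{Z}_p[[T]]$-modules. Since $\mathrm{Gal}(\widetilde{k}/k)$ is abelian, $\tau$ acts trivially by conjugation on $\mathrm{Gal}(\widetilde{k}/k_{\infty}^c) \cong \mathbb{Z}_p$, whence $\mathrm{char}_{\mathbb{Z}_p[[T]]}(\mathrm{Gal}(\widetilde{k}/k_{\infty}^c)) = (T)$, and it remains to prove $\mathrm{char}_{\mathbb{Z}_p[[T]]}(X_{\widetilde{k}}/SX_{\widetilde{k}}) = (f(0,T))$.

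The main calculation reuses the data manufactured inside the proof of Lemma \ref{f(S,T)}. The Ferrero--Washington theorem, applicable since $k$ is abelian over $\mathbb{Q}$, together with the fact that $X_{k_{\infty}^c}$ has no non-trivial finite submodule, shows that $X_{k_{\infty}^c}$ is $\mathbb{Z}_p$-free of rank $\lambda^{\ast}+1$; this forces $X_{\widetilde{k}}/SX_{\widetilde{k}} \cong (X_{\widetilde{k}})_{\mathrm{Gal}(\widetilde{k}/k_{\infty}^c)}$ to be $\mathbb{Z}_p$-free of rank $\lambda^{\ast}$. The reductions $\bar{x}_1,\dots,\bar{x}_{\lambda^{\ast}}$ of the $\mathbb{Z}_p[[S]]$-generators $x_1,\dots,x_{\lambda^{\ast}}$ of $X_{\widetilde{k}}$ therefore form a $\mathbb{Z}_p$-generating set of cardinality $\lambda^{\ast}$ in a free $\mathbb{Z}_p$-module of rank $\lambda^{\ast}$, so they constitute a $\mathbb{Z}_p$-basis. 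Reducing the defining relations $Tx_i=\sum_j f_{ij}(S)x_j$ modulo $S$ shows that $T$ acts in this basis by the matrix $A(0)=(f_{ij}(0))$, and by the very construction $f(S,T)=\det(TI-A(S))$ in Lemma \ref{f(S,T)}, its characteristic polynomial equals $f(0,T)$.

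To pass from the matrix description to the characteristic ideal, I would invoke the standard fact that any $\mathbb{Z}_p[[T]]$-module which is $\mathbb{Z}_p$-free of finite rank with $T$ acting by a prescribed matrix $A$ fits in the short exact sequence
\[
0 \to \mathbb{Z}_p[[T]]^{\oplus \lambda^{\ast}} \xrightarrow{\,TI-A\,} \mathbb{Z}_p[[T]]^{\oplus \lambda^{\ast}} \to X_{\widetilde{k}}/SX_{\widetilde{k}} \to 0,
\]
so that its characteristic ideal is $(\det(TI-A))$. Applied here this yields $\mathrm{char}_{\mathbb{Z}_p[[T]]}(X_{\widetilde{k}}/SX_{\widetilde{k}}) = (f(0,T))$, and combining with the factor $(T)$ from the quotient term produces the claimed equality.

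The principal subtlety, where I expect the square-free-or-cyclic hypothesis to enter, is the reconciliation of this computation with the non-uniqueness of $f(S,T)$ recorded in Remark \ref{uni of f}: different admissible matrices $(f_{ij}(S))$ could a priori yield different polynomials $f(S,T)$, and either the square-free hypothesis on the characteristic ideal of $X_{k_{\infty}^c}$ or the $\Lambda$-cyclicity of $X_{\widetilde{k}}$ should provide the rigidity needed to guarantee that the fixed choice of $f(S,T)$ satisfies the asserted equality rather than a mere divisibility.
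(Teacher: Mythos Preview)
Your argument is correct and in fact proves more than the proposition asserts: your route does not use the square-free or cyclic hypothesis at all, so your closing worry is unfounded.

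The paper proceeds differently. It takes a surjection $(\Lambda/f(S,T)\Lambda)^{\oplus r}\twoheadrightarrow X_{\widetilde{k}}$, reduces modulo $S$ to get $(\mathbb{Z}_p[[T]]/f(0,T))^{\oplus r}\twoheadrightarrow X_{\widetilde{k}}/SX_{\widetilde{k}}$, and from this deduces only the containment $(Tf(0,T)^r)\subset \mathrm{char}_{\mathbb{Z}_p[[T]]}(X_{k_{\infty}^c})$. The square-free hypothesis (or $r=1$ in the cyclic case) is then invoked to strip the exponent $r$ and obtain $(Tf(0,T))\subset \mathrm{char}_{\mathbb{Z}_p[[T]]}(X_{k_{\infty}^c})$; equality follows by comparing degrees. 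Your approach bypasses this entirely: because the $\bar{x}_i$ are not merely $\mathbb{Z}_p$-generators but a $\mathbb{Z}_p$-\emph{basis} of the free module $X_{\widetilde{k}}/SX_{\widetilde{k}}\cong\mathbb{Z}_p^{\lambda^\ast}$, the single matrix relation $T\bar{x}=A(0)\bar{x}$ is a complete presentation, and the characteristic ideal is exactly $(\det(TI-A(0)))=(f(0,T))$, with no power of $r$ to remove.

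One small point worth making explicit: the exactness of your sequence
\[
0 \to \mathbb{Z}_p[[T]]^{\oplus \lambda^{\ast}} \xrightarrow{\,TI-A(0)\,} \mathbb{Z}_p[[T]]^{\oplus \lambda^{\ast}} \to X_{\widetilde{k}}/SX_{\widetilde{k}} \to 0
\]
at the middle term uses that $A(0)$ is topologically nilpotent in $M_{\lambda^\ast}(\mathbb{Z}_p)$. This holds because $X_{\widetilde{k}}/SX_{\widetilde{k}}$ is a compact $\mathbb{Z}_p[[T]]$-module, so $T^n\to 0$ on it; equivalently $f(0,T)\equiv T^{\lambda^\ast}\bmod p$, i.e.\ $f(0,T)$ is distinguished. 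Even without this, the surjection from $\mathrm{coker}(TI-A(0))$ onto $X_{\widetilde{k}}/SX_{\widetilde{k}}$ is a map of torsion $\mathbb{Z}_p[[T]]$-modules with $\mu=0$ and equal $\lambda$-invariant $\lambda^\ast$, hence has finite kernel, which already forces equality of characteristic ideals.
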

\begin{proof}
Since $X_{\widetilde{k}}$ is a finitely generated $\Lambda$-module,  
there exists a positive integer $r$ such that
$\left( \Lambda /f(S,T) \Lambda \right)^{\oplus r} \rightarrow  X_{\widetilde{k}}$
is surjective.
We can take $r=1$ if $X_{\widetilde{k}}$ is cyclic as a $\Lambda$-module.
Hence we get a surjective homomorphism
$\left( \mathbb{Z}_p[[T]] /f(0,T) \mathbb{Z}_p[[T]] \right)^{\oplus r}
\rightarrow  X_{\widetilde{k}}/S X_{\widetilde{k}}$.
This implies that
\[
(Tf(0,T)^r) \subset  \mathrm{char}_{{\mathbb{Z}_p}[[T]]}(X_{k_{\infty}^c}).
\]
By Lemma \ref{f(S,T)}, we note that
$f(0,T)$ is a polynomial with $\mathrm{deg}(f(0,T))=\lambda^{*}$,
where  $\mathrm{deg}(f(0,T))$ is the degree
of the polynomial $f(0,T)$.
Since  $\mathrm{char}_{{\mathbb{Z}_p}[[T]]}(X_{k_{\infty}^c})$
has a generator which is square-free in the case of $r\geq 2$,
we obtain
\[
(Tf(0,T)) \subset  \mathrm{char}_{{\mathbb{Z}_p}[[T]]}(X_{k_{\infty}^c}).
\]
By the definition of $\lambda^{*}$, we have
$(Tf(0,T)) =  \mathrm{char}_{{\mathbb{Z}_p}[[T]]}(X_{k_{\infty}^c})$.
\end{proof}
Using the structure theorem, we can calculate the order of $\left( X_{\widetilde{k}}\right)_{\mathrm{Gal}(\widetilde{k}/k)}$.
\begin{lem}\label{Momo}
Suppose that $\lambda(k_{\infty}^c/k) \geq 2$.
With the same notation as above, we have the following:
\[
\# \left( X_{\widetilde{k}}\right)_{\mathrm{Gal}(\widetilde{k}/k)} = \# (\mathbb{Z}_p/g_0(0)\mathbb{Z}_p),
\]
where $g_0(S)$ is the same power series defined in Lemma \ref{f(S,T)}.
\end{lem}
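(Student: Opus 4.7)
The plan is to compute the coinvariant module $(X_{\widetilde{k}})_{\mathrm{Gal}(\widetilde{k}/k)} = X_{\widetilde{k}}/(S,T)X_{\widetilde{k}}$ in two stages, first modding out by $S$ and then by $T$, using the explicit presentation of $X_{\widetilde{k}}$ produced in the proof of Lemma \ref{f(S,T)}. Recall from that proof that $X_{\widetilde{k}}$ is generated over $\mathbb{Z}_p[[S]]$ by $\lambda^{\ast}$ elements $x_1,\ldots,x_{\lambda^{\ast}}$, that these satisfy $Tx_i = \sum_{j=1}^{\lambda^{\ast}} f_{ij}(S)\,x_j$, and that $X_{\widetilde{k}}/SX_{\widetilde{k}} \cong \mathbb{Z}_p^{\oplus \lambda^{\ast}}$. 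Since $\lambda^{\ast}$ generators of a free $\mathbb{Z}_p$-module of rank $\lambda^{\ast}$ necessarily form a basis, I will take the reductions $\bar{x}_1,\ldots,\bar{x}_{\lambda^{\ast}}$ as an explicit $\mathbb{Z}_p$-basis of $X_{\widetilde{k}}/SX_{\widetilde{k}}$.

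With respect to this basis, the action of $T$ is represented by the matrix $A_0 = (f_{ij}(0))_{1\le i,j \le \lambda^{\ast}}$, obtained by reducing the relations $Tx_i = \sum_j f_{ij}(S)\,x_j$ modulo $S$. Hence
\[
(X_{\widetilde{k}})_{\mathrm{Gal}(\widetilde{k}/k)} \cong \bigl(X_{\widetilde{k}}/SX_{\widetilde{k}}\bigr)/T\bigl(X_{\widetilde{k}}/SX_{\widetilde{k}}\bigr) \cong \mathbb{Z}_p^{\oplus \lambda^{\ast}}/A_0\,\mathbb{Z}_p^{\oplus \lambda^{\ast}},
\]
and by the elementary divisor theorem over the principal ideal domain $\mathbb{Z}_p$ the order of this cokernel equals $\#(\mathbb{Z}_p/\det(A_0)\mathbb{Z}_p)$ (with both sides infinite in the degenerate case $\det(A_0) = 0$). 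To finish, I will use that by its construction $f(S,T) = \det\bigl(T\cdot I - (f_{ij}(S))\bigr)$; specializing $S=T=0$ gives $g_0(0) = f(0,0) = \det(-A_0) = (-1)^{\lambda^{\ast}}\det(A_0)$, so $\#(\mathbb{Z}_p/\det(A_0)\mathbb{Z}_p) = \#(\mathbb{Z}_p/g_0(0)\mathbb{Z}_p)$, which is the claimed identity.

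The argument is essentially linear algebra on top of Lemma \ref{f(S,T)}, and no serious obstacle is expected. The one point worth stating carefully is the promotion of the Nakayama-lemma generators $x_i$ to a genuine $\mathbb{Z}_p$-basis of $X_{\widetilde{k}}/SX_{\widetilde{k}}$; this rests on the fact that $X_{\widetilde{k}}/SX_{\widetilde{k}}$ is free of rank exactly $\lambda^{\ast}$, which was extracted from Lemma \ref{Ozaki} together with the absence of non-trivial finite $\mathbb{Z}_p[[\mathrm{Gal}(k_{\infty}^c/k)]]$-submodules in $X_{k_{\infty}^c}$, both already invoked in the proof of Lemma \ref{f(S,T)}.
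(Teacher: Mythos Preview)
Your argument is correct and genuinely more direct than the paper's. The paper proves the lemma by applying the structure theorem over $\mathbb{Z}_p[[T]]$: it embeds $(X_{\widetilde{k}})_{\mathrm{Gal}(\widetilde{k}/k_\infty^c)}$ pseudo-isomorphically into $\bigoplus_j \mathbb{Z}_p[[T]]/(f_j(T)^{n_j})$, identifies the characteristic ideal with $(f(0,T))$ (citing Proposition~\ref{char cyc} and the fact from \cite{J-S} that $T\nmid f(0,T)$), and then runs a snake-lemma comparison of the multiplication-by-$T$ maps, using finiteness of the cokernel to match orders. Your proof bypasses all of this by working directly with the explicit $\mathbb{Z}_p[[S]]$-presentation from Lemma~\ref{f(S,T)}: once the images $\bar x_i$ are recognized as a $\mathbb{Z}_p$-basis of $X_{\widetilde{k}}/SX_{\widetilde{k}}\cong\mathbb{Z}_p^{\oplus\lambda^\ast}$, the $T$-action is the matrix $A_0=(f_{ij}(0))$, and the coinvariant order is $\#(\mathbb{Z}_p/\det(A_0)\mathbb{Z}_p)=\#(\mathbb{Z}_p/g_0(0)\mathbb{Z}_p)$ by Smith normal form. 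This is cleaner and avoids invoking Proposition~\ref{char cyc}, whose hypotheses (square-free characteristic ideal or $\Lambda$-cyclicity) are not assumed in the lemma; the paper's route, on the other hand, makes the link to the characteristic ideal of $X_{k_\infty^c}$ explicit, which is thematically useful elsewhere. One minor cosmetic point: with your conventions $T$ acts via $A_0^{t}$ rather than $A_0$ on column vectors, but since only $\det(A_0)$ enters this is immaterial.
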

\begin{proof}
Using the structure theorem, we have an injective pseudo-isomorphism
\[
\Phi : (X_{\widetilde{k}})_{\mathrm{Gal}(\widetilde{k}/k_{\infty}^c)}
\rightarrow 
\bigoplus_{j=1}^{l} \mathbb{Z}_p[[T]]/(f_{j}(T)^{n_j}),
\]
where $l$ and $n_j$'s are positive integers and $f_j(T)$'s are irreducible elements of $\mathbb{Z}_p[[T]]$.
By Lemma \ref{Ozaki} and Proposition \ref{char cyc},
the characteristic ideal of $(X_{\widetilde{k}})_{\mathrm{Gal}(\widetilde{k}/k_{\infty}^c)}$
is generated by $f(0,T)$.
Then we have a prime factorization $f(0,T)=\displaystyle{\prod_{j=1}^{l}} f_j(T)^{n_j}$.
By \cite[Proposition $6$]{J-S}, $f(0,T)$ is not divisible by $T$.
Hence we have a commutative diagram:
\begin{eqnarray*}\label{diag}
\begin{CD}
0 @>>> (X_{\widetilde{k}})_{\mathrm{Gal}(\widetilde{k}/k_{\infty}^c)} @>\Phi>> \displaystyle{\bigoplus_{j=1}^{l} \mathbb{Z}_p[[T]]/(f_{j}(T)^{n_j})}
 @>>> C @>>> 0
\\
& & @V  ~T\times VV  @V ~T\times VV   @V  ~T\times VV & 
\\
0 @>>> (X_{\widetilde{k}})_{\mathrm{Gal}(\widetilde{k}/k_{\infty}^c)} @>\Phi>>\displaystyle{\bigoplus_{j=1}^{l} \mathbb{Z}_p[[T]]/(f_{j}(T)^{n_j})}
@>>> C @>>> 0,
\end{CD}
\end{eqnarray*}
where the vertical maps are multiplication by $T$ and $C$ is the cokernel of $\Phi$.
Since $C$ is finite, we obtain
\[
\# (X_{\widetilde{k}})_{\mathrm{Gal}(\widetilde{k}/k)} = \# (\mathbb{Z}_p/f(0,0) \mathbb{Z}_p) = \# (\mathbb{Z}_p/g_0(0) \mathbb{Z}_p).
\]
\end{proof}
In the following lemmas and propositions,
we consider the decomposition of the prime $p$ in $\widetilde{k}/\mathbb{Q}$.
We first note that the prime number $p$ is only finitely decomposed in $\widetilde{k}/\mathbb{Q}$.
This fact was verified by Minardi in his thesis using class field theory:
\begin{lem}\cite[Lemma 3.1]{Mi}\label{Minardi}
Let $\mathfrak{D_{p}}$ be the decomposition group of $\mathfrak{p}$ in $\mathrm{Gal}(\widetilde{k}/k)$.
Then $\mathfrak{D_p}$ has finite index in $\mathrm{Gal}(\widetilde{k}/k)$.
\end{lem}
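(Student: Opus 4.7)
\emph{Plan.} The strategy is to apply local class field theory at the prime $\mathfrak{p}$. Since $p$ splits in $k$, the completion $k_{\mathfrak{p}}$ is isomorphic to $\mathbb{Q}_p$. Fixing a prime $\mathfrak{P}$ of $\widetilde{k}$ above $\mathfrak{p}$, the decomposition group $\mathfrak{D_p}$ is canonically isomorphic to $\mathrm{Gal}(\widetilde{k}_{\mathfrak{P}}/k_{\mathfrak{p}})$, which is an abelian pro-$p$ extension of $\mathbb{Q}_p$ and hence a quotient of the Galois group of the maximal abelian pro-$p$ extension of $\mathbb{Q}_p$. By local class field theory for odd $p$, this latter Galois group is isomorphic to $\mathbb{Z}_p^{\oplus 2}$, with one $\mathbb{Z}_p$-factor coming from the image of a uniformizer (the Frobenius/unramified direction) and one from the image of a topological generator of the principal units $1+p\mathbb{Z}_p$ (the inertia/cyclotomic direction). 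Consequently, $\mathfrak{D_p}$, regarded as a closed subgroup of $\mathrm{Gal}(\widetilde{k}/k)\cong \mathbb{Z}_p^{\oplus 2}$, has $\mathbb{Z}_p$-rank at most $2$, and the claim that $\mathfrak{D_p}$ has finite index is equivalent to the claim that its $\mathbb{Z}_p$-rank equals $2$.

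I would then exhibit two $\mathbb{Z}_p$-independent directions inside $\mathfrak{D_p}$. The first is the inertia subgroup $I_{\mathfrak{p}}$, which by the defining property of $N_{\infty}^{\ast}$ equals $\mathrm{Gal}(\widetilde{k}/N_{\infty}^{\ast})$ and provides one $\mathbb{Z}_p$-direction. For the second, I would study the local completion of $N_{\infty}^{\ast}/k$ at $\mathfrak{p}$: since $\mathfrak{p}$ is unramified in $N_{\infty}^{\ast}/k$, this completion is an unramified extension of $\mathbb{Q}_p$; if non-trivial, it must coincide with the unique unramified $\mathbb{Z}_p$-extension of $\mathbb{Q}_p$, thereby contributing a rank-$1$ direction in $\mathfrak{D_p}$ linearly independent from $I_{\mathfrak{p}}$.

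The main obstacle is thus to rule out the possibility that $\mathfrak{p}$ splits completely in $N_{\infty}^{\ast}/k$, i.e., that the Frobenius of $\mathfrak{p}$ in $\mathrm{Gal}(N_{\infty}^{\ast}/k)$ is trivial. I would argue by contradiction: if such splitting occurred, then $\mathfrak{D_p}$ would reduce to $I_{\mathfrak{p}}$, forcing $\widetilde{k}_{\mathfrak{P}}/\mathbb{Q}_p$ to be a totally ramified $\mathbb{Z}_p$-extension. By uniqueness of the totally ramified abelian pro-$p$ $\mathbb{Z}_p$-extension of $\mathbb{Q}_p$ (namely the cyclotomic one), the local completion $(N_{\infty})_{\mathfrak{p}}$ would then coincide with $(k_{\infty}^c)_{\mathfrak{p}}$. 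Combined with the disjointness $k_{\infty}^c\cap N_{\infty}=k$ (which follows because $\mathfrak{p}^{\ast}$ is totally ramified in $k_{\infty}^c/k$ but unramified in $N_{\infty}/k$, forcing the intersection to be trivial) and a parallel analysis at $\mathfrak{p}^{\ast}$, this would make $\widetilde{k}/k_{\infty}^c$ an everywhere unramified $\mathbb{Z}_p$-extension in which all primes of $k_{\infty}^c$ above $p$ split completely, yielding a $\mathbb{Z}_p$-quotient of $X_{k_{\infty}^c}$ whose structural incompatibility with the characteristic ideal, as controlled by the Jaulent--Sands type result cited in Proposition \ref{char cyc}, gives the required contradiction.
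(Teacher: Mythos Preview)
The paper itself does not prove this lemma; it merely cites Minardi's thesis and remarks that the proof goes ``using class field theory.'' So there is no in-paper argument to compare against, and your task is really to give a self-contained class-field-theoretic proof. Your overall framework is sound: identify $\mathfrak{D}_{\mathfrak{p}}$ with $\mathrm{Gal}(\widetilde{k}_{\mathfrak{P}}/\mathbb{Q}_p)$, observe that $I_{\mathfrak{p}}=\mathrm{Gal}(\widetilde{k}/N_\infty^\ast)\cong\mathbb{Z}_p$ gives one direction, and then produce a second independent direction. The problem is in how you produce that second direction.

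Your key assertion, ``by uniqueness of the totally ramified abelian pro-$p$ $\mathbb{Z}_p$-extension of $\mathbb{Q}_p$ (namely the cyclotomic one),'' is false. The Galois group of the maximal abelian pro-$p$ extension of $\mathbb{Q}_p$ is $\mathbb{Z}_p^{\oplus 2}$ with inertia a saturated rank-one summand; any $\mathbb{Z}_p$-quotient onto which inertia surjects is totally ramified, and there are uncountably many such quotients (parametrized by $\mathbb{Z}_p$). The cyclotomic one is singled out not by being totally ramified but by the property that $p$ is a universal norm. Because this uniqueness claim fails, your deduction that $(N_\infty)_{\mathfrak{p}}$ coincides with $(k_\infty^c)_{\mathfrak{p}}$ does not follow as stated. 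Moreover, the final step---``structural incompatibility with the characteristic ideal, as controlled by the Jaulent--Sands type result cited in Proposition~\ref{char cyc}''---is both vague and logically out of order: those results sit downstream of Lemma~\ref{Minardi} in the paper and in any case do not obviously contradict the existence of a $p$-split unramified $\mathbb{Z}_p$-quotient of $X_{k_\infty^c}$ (indeed $\widetilde{k}/k_\infty^c$ is always unramified, so that part of your conclusion is not even special to the contradiction hypothesis).

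A direct class-field-theoretic argument avoids all of this. Choose $\alpha\in k^\times$ with $(\alpha)=\mathfrak{p}^{h}$, where $h$ is the order of $[\mathfrak{p}]$ in $\mathrm{Cl}(k)$. Since $\widetilde{k}/k$ is unramified outside $p$, the product formula gives $\mathrm{rec}_{\mathfrak{p}}(\alpha)=\mathrm{rec}_{\mathfrak{p}^\ast}(\alpha)^{-1}$ in $\mathrm{Gal}(\widetilde{k}/k)$. Now $\alpha\in U_{\mathfrak{p}^\ast}$, so $\mathrm{rec}_{\mathfrak{p}^\ast}(\alpha)\in I_{\mathfrak{p}^\ast}$; and because $\mathcal{O}_k^\times$ is finite (here is where the imaginary-quadratic hypothesis enters), the map $U_{\mathfrak{p}^\ast}\to I_{\mathfrak{p}^\ast}$ has finite kernel, so $\mathrm{rec}_{\mathfrak{p}^\ast}(\alpha)=0$ would force $\alpha$ to be a root of unity in $k$, contradicting $(\alpha)=\mathfrak{p}^h$. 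Hence $\mathrm{rec}_{\mathfrak{p}}(\alpha)$ is a nonzero element of $\mathfrak{D}_{\mathfrak{p}}\cap I_{\mathfrak{p}^\ast}$. Since $I_{\mathfrak{p}}=\mathrm{Gal}(\widetilde{k}/N_\infty^\ast)$ and $I_{\mathfrak{p}^\ast}=\mathrm{Gal}(\widetilde{k}/N_\infty)$ are distinct saturated rank-one subgroups of $\mathbb{Z}_p^{\oplus 2}$, it follows that $\mathfrak{D}_{\mathfrak{p}}\supset I_{\mathfrak{p}}+\mathbb{Z}_p\,\mathrm{rec}_{\mathfrak{p}}(\alpha)$ has $\mathbb{Z}_p$-rank $2$, hence finite index. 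This is presumably close to Minardi's argument and replaces your contradiction strategy entirely.
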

We give a necessary and sufficient condition
for the decomposition group $\mathfrak{D_{p}}$ to be normal in $\mathrm{Gal}(\widetilde{k}/\mathbb{Q})$.
\begin{lem}\label{cond normal}
The decomposition group $\mathfrak{D_p}$ is a normal subgroup of $\mathrm{Gal}(\widetilde{k}/\mathbb{Q})$
if and only if
$[\mathrm{Gal}(\widetilde{k}/k): \mathfrak{D_{p}}] \leq  [L_{k}\cap \widetilde{k}:k].$
\end{lem}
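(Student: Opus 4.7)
The plan is to translate the normality of $\mathfrak{D_p}$ in $\mathrm{Gal}(\widetilde{k}/\mathbb{Q})$ into the equality $\mathfrak{D_p} = \mathfrak{D_{p^{\ast}}}$, and then to compare this condition with the stated index inequality by passing to a cyclic quotient and invoking the Artin reciprocity picture of $L_k \cap \widetilde{k}$.

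First I would observe that $\widetilde{k}/\mathbb{Q}$ is Galois, so conjugation by a lift of the nontrivial element $c \in \mathrm{Gal}(k/\mathbb{Q})$ acts on $\Gamma := \mathrm{Gal}(\widetilde{k}/k)$. Because $c(\mathfrak{p}) = \mathfrak{p}^{\ast}$, this action sends $\mathfrak{D_p}$ to $\mathfrak{D_{p^{\ast}}}$ and $I_{\mathfrak{p}}$ to $I_{\mathfrak{p}^{\ast}}$, where $I_{\mathfrak{p}}$ denotes the inertia subgroup of $\mathfrak{p}$ in $\Gamma$. Since $\Gamma$ is abelian, $\mathfrak{D_p}$ is automatically normal in $\Gamma$, so being normal in $\mathrm{Gal}(\widetilde{k}/\mathbb{Q})$ is equivalent to the equality $\mathfrak{D_p} = \mathfrak{D_{p^{\ast}}}$. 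Since $\widetilde{k}/k$ is ramified only at primes above $p$, the maximal unramified subextension of $\widetilde{k}/k$ is the fixed field of $I_{\mathfrak{p}} + I_{\mathfrak{p}^{\ast}}$, and this coincides with $L_k \cap \widetilde{k}$ by the defining property of the $p$-Hilbert class field; thus $[L_k \cap \widetilde{k}:k] = [\Gamma : I_{\mathfrak{p}} + I_{\mathfrak{p}^{\ast}}]$.

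The forward direction is then immediate: if $\mathfrak{D_p} = \mathfrak{D_{p^{\ast}}}$, then $\mathfrak{D_p}$ contains both $I_{\mathfrak{p}}$ and $I_{\mathfrak{p}^{\ast}}$, hence $\mathfrak{D_p} \supset I_{\mathfrak{p}} + I_{\mathfrak{p}^{\ast}}$, which yields the desired inequality on indices. For the converse, I would pass to the quotient $\Gamma/I_{\mathfrak{p}} \cong \mathrm{Gal}(N_{\infty}^{\ast}/k) \cong \mathbb{Z}_p$, in which $\mathfrak{D_p}/I_{\mathfrak{p}}$ and $(I_{\mathfrak{p}} + I_{\mathfrak{p}^{\ast}})/I_{\mathfrak{p}}$ are closed subgroups with indices $[\Gamma:\mathfrak{D_p}]$ and $[L_k \cap \widetilde{k}:k]$ respectively. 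Because closed subgroups of $\mathbb{Z}_p$ are totally ordered by inclusion, the hypothesized index inequality forces $\mathfrak{D_p} \supset I_{\mathfrak{p}} + I_{\mathfrak{p}^{\ast}}$, and $c$-conjugation yields the same inclusion for $\mathfrak{D_{p^{\ast}}}$. Finally, inside the finite abelian quotient $\mathrm{Gal}(L_k \cap \widetilde{k}/k) \cong \Gamma/(I_{\mathfrak{p}} + I_{\mathfrak{p}^{\ast}})$, the images of $\mathfrak{D_p}$ and $\mathfrak{D_{p^{\ast}}}$ are generated by the Frobenius elements $\mathrm{Fr}_{\mathfrak{p}}$ and $\mathrm{Fr}_{\mathfrak{p}^{\ast}}$; since $k$ is imaginary quadratic, complex conjugation acts as $-1$ on the $p$-part of the ideal class group of $k$ (because $[\bar{\mathfrak{a}}] = [\mathfrak{a}]^{-1}$), so $\mathrm{Fr}_{\mathfrak{p}^{\ast}} = \mathrm{Fr}_{\mathfrak{p}}^{-1}$ and the two cyclic subgroups coincide, giving $\mathfrak{D_p} = \mathfrak{D_{p^{\ast}}}$.

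The main obstacle is the reverse implication: an index inequality by itself does not force an inclusion of subgroups, and the argument essentially rests on two ingredients, namely the cyclicity of $\Gamma/I_{\mathfrak{p}}$ (so that its closed subgroups form a totally ordered chain) and the fact that complex conjugation inverts ideal classes in an imaginary quadratic field, which makes the Frobenius elements at $\mathfrak{p}$ and $\mathfrak{p}^{\ast}$ generate the same cyclic subgroup of $\mathrm{Gal}(L_k \cap \widetilde{k}/k)$.
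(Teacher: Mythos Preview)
Your proof is correct and shares its two essential ingredients with the paper's argument: the reduction of normality to the equality $\mathfrak{D_p} = \mathfrak{D_{p^{\ast}}}$, and the use of the total ordering of closed subgroups in a $\mathbb{Z}_p$-extension to convert the index inequality into an inclusion. The only notable difference is in how the converse is finished. The paper records at the outset that $L_k \cap \widetilde{k} \subset k_{\infty}^a$ (because complex conjugation acts as $-1$ on $\mathrm{Gal}(k_{\infty}^a/k)$), then observes that both $\widetilde{k}^{\mathfrak{D_p}}$ and $L_k \cap \widetilde{k}$ lie inside the $\mathbb{Z}_p$-extension $N_{\infty}^{\ast}$; the index inequality forces $\widetilde{k}^{\mathfrak{D_p}} \subset L_k \cap \widetilde{k} \subset k_{\infty}^a$, and since every layer of the anticyclotomic tower is Galois over $\mathbb{Q}$, normality follows immediately. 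You instead descend to the quotient $\Gamma/(I_{\mathfrak{p}} + I_{\mathfrak{p}^{\ast}}) \cong \mathrm{Gal}(L_k \cap \widetilde{k}/k)$ and invoke the Artin map together with $[\bar{\mathfrak a}] = [\mathfrak a]^{-1}$ to match the cyclic subgroups generated by $\mathrm{Fr}_{\mathfrak{p}}$ and $\mathrm{Fr}_{\mathfrak{p}^{\ast}}$. Both routes encode the same fact, namely that $c$ acts by inversion on the relevant quotient; the paper's phrasing avoids explicit Frobenius elements, while yours makes the class-field-theoretic mechanism more visible.
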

\begin{proof}
First, we note that $k_{\infty}^a /\mathbb{Q}$ is a galois extension. Furthermore, the complex conjugation, namely,
the generator of $\mathrm{Gal}(k/\mathbb{Q})$ acts on $\mathrm{Gal}(k_{\infty}^a /k)$ as inverse. 
Then we have $L_{k} \cap \widetilde{k} \subset k_{\infty}^a$.
By the definition of $N_{\infty}$,
$k_{\infty}^a \cap N_{\infty}$ coincides with $ L_{k} \cap \widetilde{k}$.
Since $\mathfrak{p}^{\ast}$ splits completely in $\widetilde{k}^{\mathfrak{D}_{\mathfrak{p}^{\ast}}}$,
we have $\widetilde{k}^{\mathfrak{D}_{\mathfrak{p}^{\ast}}} \subset N_{\infty}$.\par
We suppose that
$ \mathfrak{D_{p}}$ is a normal subgroup of $\mathrm{Gal}(\widetilde{k}/\mathbb{Q})$.
Then we have $\widetilde{k}^{\mathfrak{D_{p}}}= \widetilde{k}^{\mathfrak{D}_{\mathfrak{p}^{\ast}}}$.
Hence $p$ splits completely in $\widetilde{k}^{\mathfrak{D_{p}}}$
and $\widetilde{k}^{\mathfrak{D_{p}}}$ is a subfield of $k_{\infty}^a$.
This implies that $[\mathrm{Gal}(\widetilde{k}/k): \mathfrak{D_{p}}] \leq  [L_{k}\cap \widetilde{k}:k]$.
Conversely, we suppose that $[\mathrm{Gal}(\widetilde{k}/k): \mathfrak{D_{p}}] \leq  [L_{k}\cap \widetilde{k}:k].$
Since $L_{k}\cap \widetilde{k}$ and $\widetilde{k}^{\mathfrak{D}_{\mathfrak{p}}}$ are subfields of the $\mathbb{Z}_p$-extension
field $N_{\infty}^{\ast}$,
we have $\widetilde{k}^{\mathfrak{D}_{\mathfrak{p}}} \subset L_{k}\cap \widetilde{k}$.
Then $\widetilde{k}^{\mathfrak{D}_{\mathfrak{p}}}/\mathbb{Q}$ is a galois extension.
This implies that $\mathfrak{D_p}$ is normal.
\end{proof}
From the following proposition,
we see that the $p$-adic valuation of the constant term of $f(S,T)$ gives an upper bound of the number of prime ideals of $\widetilde{k}$
lying above $p$. 
\begin{prop}\label{index}
Suppose that $\lambda(k_{\infty}^c/k) \geq 2$.
We have the following:
\begin{equation*}
[\mathrm{Gal}(\widetilde{k}/k): \mathfrak{D_{p}}] 
\leq 
\mathrm{min}
\{
[L_{k}\cap \widetilde{k}:k], \# \left( \mathbb{Z}_p/g_0(0)\mathbb{Z}_p \right)
\}
\end{equation*}
\hspace{3.5cm} if $\mathfrak{D_{p}}$ is a normal subgroup of $\mathrm{Gal}(\widetilde{k}/\mathbb{Q})$,
\begin{equation*}
 [L_{k}\cap \widetilde{k}:k] < [\mathrm{Gal}(\widetilde{k}/k): \mathfrak{D_{p}}] \leq \# \left( \mathbb{Z}_p/g_0(0)\mathbb{Z}_p \right)
\end{equation*}
\hspace{3.5cm} if $ \mathfrak{D_{p}}$ is not a normal subgroup of $\mathrm{Gal}(\widetilde{k}/\mathbb{Q}).$
\end{prop}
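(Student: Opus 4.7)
The plan is to leverage Lemma~\ref{cond normal} for the case split and Lemma~\ref{Momo} for the coinvariant bound, bridged by class field theory. By Lemma~\ref{cond normal}, the decomposition group $\mathfrak{D_p}$ is normal in $\mathrm{Gal}(\widetilde{k}/\mathbb{Q})$ if and only if $[\mathrm{Gal}(\widetilde{k}/k):\mathfrak{D_p}] \leq [L_k \cap \widetilde{k}:k]$. Hence the case split in the proposition matches this dichotomy exactly: in the normal case the $[L_k \cap \widetilde{k}:k]$-bound is immediate, while in the non-normal case we automatically obtain $[L_k \cap \widetilde{k}:k] < [\mathrm{Gal}(\widetilde{k}/k):\mathfrak{D_p}]$. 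It therefore remains only to establish the common upper bound $[\mathrm{Gal}(\widetilde{k}/k):\mathfrak{D_p}] \leq \#(\mathbb{Z}_p/g_0(0)\mathbb{Z}_p)$, and by Lemma~\ref{Momo} this is equivalent to proving
\[ [G:\mathfrak{D_p}] \leq \#(X_{\widetilde{k}})_G, \qquad G := \mathrm{Gal}(\widetilde{k}/k). \]

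To establish this coinvariant bound, I would introduce the maximal subfield $M$ of $L_{\widetilde{k}}$ that is abelian over $k$, so that $\mathrm{Gal}(M/\widetilde{k}) \cong (X_{\widetilde{k}})_G$, $M/\widetilde{k}$ is unramified, and there is an exact sequence $1 \to (X_{\widetilde{k}})_G \to \mathrm{Gal}(M/k) \to G \to 1$. Write $\widetilde{\mathfrak{D}} \subseteq \mathrm{Gal}(M/k)$ and $\widetilde{\mathfrak{I}} \subseteq \widetilde{\mathfrak{D}}$ for the decomposition and inertia groups of a chosen prime of $M$ above $\mathfrak{p}$. Since $M/\widetilde{k}$ is unramified above $\mathfrak{p}$, the inertia $\widetilde{\mathfrak{I}}$ maps isomorphically onto $\mathfrak{I_p}$ and intersects $(X_{\widetilde{k}})_G$ trivially, while $\widetilde{\mathfrak{D}}$ projects onto $\mathfrak{D_p}$. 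The strategy is to extract from this diagram a surjection $(X_{\widetilde{k}})_G \twoheadrightarrow G/\mathfrak{D_p}$ (equivalently, an embedding of $G/\mathfrak{D_p}$ as a subquotient of $(X_{\widetilde{k}})_G$), which would immediately yield the desired bound.

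The main obstacle is realising this surjection precisely. Abstract class field theory alone gives only that $\widetilde{\mathfrak{D}} \cdot (X_{\widetilde{k}})_G$ together with the inertia subgroups at the other primes above $p$ generate $\mathrm{Gal}(M/k)$; the delicate point is to show that the decomposition data at $\mathfrak{p}$ alone captures enough of the coinvariant module $(X_{\widetilde{k}})_G$ to force the required index. For this I would exploit the explicit determinantal description of $f(S,T)$ from Lemma~\ref{f(S,T)}: the constant term $g_0(0) = f(0,0) = \det(-[f_{ij}(0)])$ measures simultaneously the order $\#(X_{\widetilde{k}})_G$ (via Lemma~\ref{Momo}) and the local reciprocity image at $\mathfrak{p}$, so matching these two quantities---and using the normality dichotomy to handle the contribution from $\mathfrak{p}^{\ast}$---should produce the needed surjection and hence the uniform upper bound.
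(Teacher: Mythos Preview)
Your reduction is correct: by Lemma~\ref{cond normal} the case split is exactly the normality dichotomy, and by Lemma~\ref{Momo} the remaining task is precisely to show $[G:\mathfrak{D_p}]\le \#(X_{\widetilde{k}})_G$. The gap is in this last step. You correctly identify the obstacle in your $M/k$ set-up --- the decomposition group at a single prime above $\mathfrak{p}$ together with $(X_{\widetilde{k}})_G$ need not generate $\mathrm{Gal}(M/k)$ --- but your proposed resolution via the determinantal form of $f(S,T)$ is not a workable strategy: the identity $g_0(0)=\det(-[f_{ij}(0)])$ encodes only the order of $(X_{\widetilde{k}})_G$, not any link between that module and the decomposition of $\mathfrak{p}$, so ``matching these two quantities'' does not produce the surjection you need.

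The paper's proof avoids this difficulty by working not over $k$ but over the intermediate field $k_\infty^c$. One applies the snake lemma (multiplication by $T$) to the exact sequence of Lemma~\ref{Ozaki},
\[
0\to (X_{\widetilde{k}})_{\mathrm{Gal}(\widetilde{k}/k_\infty^c)}\to X_{k_\infty^c}\to \mathrm{Gal}(\widetilde{k}/k_\infty^c)\to 0,
\]
and uses two facts: (i) $T$ does not divide $\mathrm{char}_{\mathbb{Z}_p[[T]]}\bigl((X_{\widetilde{k}})_{\mathrm{Gal}(\widetilde{k}/k_\infty^c)}\bigr)$, so the first invariant term vanishes; and (ii) Okano's lemma \cite[Lemma~4.1]{Okano}, which identifies $(X_{k_\infty^c})^{\mathrm{Gal}(k_\infty^c/k)}$ with the decomposition group $D_{k_\infty^c}$ of a prime above $p$ in $X_{k_\infty^c}$. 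This yields an exact sequence
\[
0\to D_{k_\infty^c}\to \mathrm{Gal}(\widetilde{k}/k_\infty^c)\to (X_{\widetilde{k}})_G\to\cdots,
\]
from which one reads off that the image of $D_{k_\infty^c}$ in $\mathrm{Gal}(\widetilde{k}/k_\infty^c)$ is $\mathrm{Gal}(\widetilde{k}/k_\infty^c)\cap\mathfrak{D_p}$, and hence
\[
[G:\mathfrak{D_p}]=[\mathrm{Gal}(\widetilde{k}/k_\infty^c):\mathrm{Gal}(\widetilde{k}/k_\infty^c)\cap\mathfrak{D_p}]=\#\mathrm{Coker}\bigl(D_{k_\infty^c}\to\mathrm{Gal}(\widetilde{k}/k_\infty^c)\bigr)\le \#(X_{\widetilde{k}})_G.
\]
The key input you are missing is Okano's identification of the $T$-invariants of $X_{k_\infty^c}$ with the decomposition group; this is what converts the abstract diagram into the embedding $G/\mathfrak{D_p}\hookrightarrow (X_{\widetilde{k}})_G$ you were seeking.
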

\begin{proof}
We will prove that $[\mathrm{Gal}(\widetilde{k}/k): \mathfrak{D_{p}}] \leq \# \left( \mathbb{Z}_p/g_0(0)\mathbb{Z}_p \right)$.
By Lemma \ref{Ozaki}, we have an exact sequence
\begin{eqnarray*}
0 \rightarrow (X_{\widetilde{k}})_{\mathrm{Gal}(\widetilde{k}/k_{\infty}^c)}
\rightarrow X_{k_{\infty}^c} \rightarrow \mathrm{Gal}(\widetilde{k}/k_{\infty}^c) \rightarrow 0 \label{ex cyc}
\end{eqnarray*}
as $\mathbb{Z}_p[[\mathrm{Gal}(k_{\infty}^c/k)]]$-modules. 
The snake lemma gives the exact sequence
\begin{eqnarray*}
0 &\rightarrow & \left( (X_{\widetilde{k}})_{\mathrm{Gal}(\widetilde{k}/k_{\infty}^c)} \right)^{\mathrm{Gal}(k_{\infty}^c/k)}
\rightarrow \left( X_{k_{\infty}^c} \right)^{\mathrm{Gal}(k_{\infty}^c/k)}
\rightarrow \left( \mathrm{Gal}(\widetilde{k}/k_{\infty}^c) \right)^{\mathrm{Gal}(k_{\infty}^c/k)} \\ 
& \rightarrow &
\left( (X_{\widetilde{k}})_{\mathrm{Gal}(\widetilde{k}/k_{\infty}^c)} \right)_{\mathrm{Gal}(k_{\infty}^c/k)}
\rightarrow  \left( X_{k_{\infty}^c} \right)_{\mathrm{Gal}(k_{\infty}^c/k)} \rightarrow
\left( \mathrm{Gal}(\widetilde{k}/k_{\infty}^c) \right)_{\mathrm{Gal}(k_{\infty}^c/k)} \rightarrow 0. \nonumber
\end{eqnarray*}
Then we have
$\left( (X_{\widetilde{k}})_{\mathrm{Gal}(\widetilde{k}/k_{\infty}^c)} \right)^{\mathrm{Gal}(k_{\infty}^c/k)}=0$.
Indeed,
$T$ does not divide a generator of
$\mathrm{char}_{\mathbb{Z}_p[[T]]}\left((X_{\widetilde{k}})_{\mathrm{Gal}(\widetilde{k}/k_{\infty}^c)} \right)$.
Furthermore, we have $\left( X_{k_{\infty}^c} \right)^{\mathrm{Gal}(k_{\infty}^c/k)}=D_{k_{\infty}^c}$ by \cite[Lemma 4.1]{Okano},
where $D_{k_{\infty}^c} $ is the decomposition group of a prime ideal of $k_{\infty}^c$ lying above $p$ in
$X_{k_{\infty}^c} =\mathrm{Gal}(L_{k_{\infty}^c}/k_{\infty}^c)$.
Therefore we obtain an exact sequence
\begin{eqnarray}\label{ex index}
0 &\rightarrow &  D_{k_{\infty}^c}
\rightarrow  \mathrm{Gal}(\widetilde{k}/k_{\infty}^c) \\  \label{snake ex} 
& \rightarrow &  (X_{\widetilde{k}})_{\mathrm{Gal}(\widetilde{k}/k)} 
\rightarrow  \left( X_{k_{\infty}^c} \right)_{\mathrm{Gal}(k_{\infty}^c/k)}  \rightarrow
\mathrm{Gal}(\widetilde{k}/k_{\infty}^c)  \rightarrow 0. \nonumber
\end{eqnarray}
This implies that
\begin{eqnarray*}
\mathrm{Coker} (D_{k_{\infty}^c} \rightarrow \mathrm{Gal}(\widetilde{k}/k_{\infty}^c))
&\cong &
\mathrm{Gal}(\widetilde{k}/k_{\infty}^c) / \mathrm{ker}(\mathrm{Gal}({\widetilde{k}/k_{\infty}^c})
\rightarrow \left( X_{\widetilde{k}}\right)_{\mathrm{Gal}(\widetilde{k}/k)})\\
&\cong &
\mathrm{Image}(\mathrm{Gal}({\widetilde{k}/k_{\infty}^c})
\rightarrow \left( X_{\widetilde{k}}\right)_{\mathrm{Gal}(\widetilde{k}/k)}).
\end{eqnarray*}
We note that
\[
\mathrm{Image} (D_{k_{\infty}^c}  \rightarrow \mathrm{Gal}(\widetilde{k}/k_{\infty}^c) )
=
\mathrm{Gal}(\widetilde{k}/k_{\infty}^c) \cap \mathfrak{D_{p}}.
\]
From the exact sequence (\ref{ex index}), we obtain
\begin{eqnarray*}
 [\mathrm{Gal}(\widetilde{k}/k) : \mathfrak{D_{p}}] 
&=&
[\mathrm{Gal}(\widetilde{k}/k_{\infty}^c) \mathfrak{D_{p}} : \mathfrak{D_{p}}] \\
&= &
[\mathrm{Gal}(\widetilde{k}/k_{\infty}^c)  : \mathrm{Gal}(\widetilde{k}/k_{\infty}^c)  \cap \mathfrak{D_{p}}] \\
&=&
\# \mathrm{Coker} (D_{k_{\infty}^c} \rightarrow \mathrm{Gal}(\widetilde{k}/k_{\infty}^c))\\
&\leq &
\#  (X_{\widetilde{k}})_{\mathrm{Gal}(\widetilde{k}/k)}.
\end{eqnarray*}
By Lemma \ref{Momo}, we have completed the proof.
\end{proof}
Let $D_{\mathfrak{p}}$ (respectively, $D_{\mathfrak{p}^\ast}$)
be the decomposition group of $\mathfrak{p}$
(respectively, $\mathfrak{p}^\ast$) in $\mathrm{Gal}(N_{\infty}/k)$ (respectively, $\mathrm{Gal}(N_{\infty}^{\ast}/k))$.
\begin{lem}\label{indexD}
With the same notation as above, we have the following:
\begin{eqnarray*}\label{D}
[\mathrm{Gal}(\widetilde{k}/k):\mathfrak{D_{p}}] &=& [\mathrm{Gal}(N_{\infty}^{\ast}/k):D_{\mathfrak{p}}],\\
~[\mathrm{Gal}(\widetilde{k}/k):\mathfrak{D_{p^{\ast}}}] &=& [\mathrm{Gal}(N_{\infty}/k):D_{\mathfrak{p}^{\ast}}].
\end{eqnarray*}
\end{lem}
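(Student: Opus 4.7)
The plan is to reduce both equalities to the basic Galois-theoretic fact that decomposition groups behave well under taking quotients by inertia. Recall from the discussion after Lemma \ref{Minardi} (and from the description of $N_\infty$ and $N_\infty^\ast$ given in Section~\ref{Preli}) that $N_{\infty}^{\ast}$ is the fixed field in $\widetilde{k}$ of the inertia subgroup $I_{\mathfrak{p}} \subset \mathrm{Gal}(\widetilde{k}/k)$ attached to $\mathfrak{p}$; in particular $\mathrm{Gal}(\widetilde{k}/N_{\infty}^{\ast}) = I_{\mathfrak{p}}$, and $I_{\mathfrak{p}} \subset \mathfrak{D}_{\mathfrak{p}}$.

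Next, I consider the surjective restriction
\[
\pi\colon \mathrm{Gal}(\widetilde{k}/k) \twoheadrightarrow \mathrm{Gal}(N_{\infty}^{\ast}/k)
\]
whose kernel is precisely $I_{\mathfrak{p}}$. By the standard behaviour of decomposition groups in Galois theory (the decomposition group of a prime in a subextension is the image of the decomposition group of any prime above it), the image $\pi(\mathfrak{D}_{\mathfrak{p}})$ equals the decomposition group of $\mathfrak{p}$ in $\mathrm{Gal}(N_{\infty}^{\ast}/k)$, which is $D_{\mathfrak{p}}$. Since $I_{\mathfrak{p}}\subset\mathfrak{D}_{\mathfrak{p}}$, the correspondence theorem then gives
\[
[\mathrm{Gal}(\widetilde{k}/k):\mathfrak{D}_{\mathfrak{p}}]
= [\mathrm{Gal}(\widetilde{k}/k)/I_{\mathfrak{p}} : \mathfrak{D}_{\mathfrak{p}}/I_{\mathfrak{p}}]
= [\mathrm{Gal}(N_{\infty}^{\ast}/k) : D_{\mathfrak{p}}],
\]
which is the first asserted equality.

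The second equality follows by the exact same argument with the roles of $\mathfrak{p}$ and $\mathfrak{p}^{\ast}$ swapped: $N_{\infty}$ is the fixed field of the inertia $I_{\mathfrak{p}^{\ast}}$ of $\mathfrak{p}^{\ast}$ in $\mathrm{Gal}(\widetilde{k}/k)$, so factoring the surjection $\mathrm{Gal}(\widetilde{k}/k)\twoheadrightarrow \mathrm{Gal}(N_{\infty}/k)$ through $I_{\mathfrak{p}^{\ast}}\subset \mathfrak{D}_{\mathfrak{p}^{\ast}}$ yields $[\mathrm{Gal}(\widetilde{k}/k):\mathfrak{D}_{\mathfrak{p}^{\ast}}] = [\mathrm{Gal}(N_{\infty}/k):D_{\mathfrak{p}^{\ast}}]$. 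There is no genuine obstacle here; the only mild subtlety is making sure that $I_{\mathfrak{p}}$ really sits inside $\mathfrak{D}_{\mathfrak{p}}$ so that one can meaningfully pass to the quotient, but this is immediate from the definitions of decomposition and inertia groups.
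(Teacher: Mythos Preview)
Your proof is correct and follows essentially the same approach as the paper: both arguments use the restriction map $\mathrm{Gal}(\widetilde{k}/k)\twoheadrightarrow \mathrm{Gal}(N_{\infty}/k)$ (respectively $N_{\infty}^{\ast}$), observe that its kernel is contained in the relevant decomposition group, and conclude via the correspondence theorem. The only cosmetic difference is that the paper justifies the inclusion $\mathrm{Gal}(\widetilde{k}/N_{\infty})\subset \mathfrak{D}_{\mathfrak{p}^{\ast}}$ by noting that the primes above $\mathfrak{p}^{\ast}$ ramify in $\widetilde{k}/N_{\infty}$, whereas you invoke directly that this kernel equals the inertia subgroup $I_{\mathfrak{p}^{\ast}}\subset \mathfrak{D}_{\mathfrak{p}^{\ast}}$; these are two phrasings of the same fact.
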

\begin{proof}
From the natural exact sequence
\begin{eqnarray*}
0 &\rightarrow &
\mathrm{Gal}(\widetilde{k}/N_{\infty})
\rightarrow \mathrm{Gal}(\widetilde{k}/k)
\rightarrow \mathrm{Gal}(N_{\infty}/k)
\rightarrow 0,
\end{eqnarray*}
we have
\begin{eqnarray*}
0 &\rightarrow & \mathrm{Gal}(\widetilde{k}/N_{\infty}) \cap \mathfrak{D}_{\mathfrak{p}^{\ast}}
\rightarrow  \mathfrak{D}_{\mathfrak{p}^{\ast}}
\rightarrow D_{\mathfrak{p}^{\ast}}
\rightarrow 0.
\end{eqnarray*}
Since all prime ideals of $N_{\infty}$ lying above $\mathfrak{p}^{\ast}$ ramify in
$\widetilde{k}/N_{\infty}$, we have $\mathrm{Gal}(\widetilde{k}/N_{\infty}) \cap \mathfrak{D}_{\mathfrak{p}^{\ast}}=\mathrm{Gal}(\widetilde{k}/N_{\infty})$.
Thus we obtain
$\mathrm{Gal}(\widetilde{k}/k)/\mathfrak{D}_{\mathfrak{p}^{\ast}} \cong \mathrm{Gal}(N_{\infty}/k)/D_{\mathfrak{p}^{\ast}}$.
Hence we have proved the former part.
We can prove that
$[\mathrm{Gal}(\widetilde{k}/k):\mathfrak{D_{p}}] = [\mathrm{Gal}(N_{\infty}^{\ast}/k):D_{\mathfrak{p}}]$
in the same way as above.
Thus we get the conclusion.
\end{proof}
\begin{prop}\label{split}
Suppose that
$ \mathfrak{D_{p^{\ast}}}$ is not a normal subgroup of $\mathrm{Gal}(\widetilde{k}/\mathbb{Q}).$
Then,
$p$ splits completely in $L_k \cap \widetilde{k}$.
\end{prop}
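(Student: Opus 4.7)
The approach is to combine the complex conjugation symmetry between $\mathfrak{p}$ and $\mathfrak{p}^{\ast}$ with Lemma \ref{cond normal}, and to exploit that all the fields of interest sit inside a single $\mathbb{Z}_p$-extension of $k$.

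First I would record the symmetry observation that complex conjugation, lifted to an element of $\mathrm{Gal}(\widetilde{k}/\mathbb{Q})$, carries $\mathfrak{p}$ to $\mathfrak{p}^{\ast}$, so conjugation by it sends $\mathfrak{D}_{\mathfrak{p}^{\ast}}$ to $\mathfrak{D}_{\mathfrak{p}}$. Hence $\mathfrak{D}_{\mathfrak{p}^{\ast}}$ is normal in $\mathrm{Gal}(\widetilde{k}/\mathbb{Q})$ exactly when $\mathfrak{D}_{\mathfrak{p}} = \mathfrak{D}_{\mathfrak{p}^{\ast}}$, and this is the same condition governing the normality of $\mathfrak{D}_{\mathfrak{p}}$. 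Under our assumption, neither is normal. Applying Lemma \ref{cond normal} (and its version with $\mathfrak{p}$ and $\mathfrak{p}^{\ast}$ swapped, which follows by the same argument) yields the strict inequalities
\[
[\mathrm{Gal}(\widetilde{k}/k) : \mathfrak{D}_{\mathfrak{p}^{\ast}}] > [L_k \cap \widetilde{k} : k]
\quad \text{and} \quad
[\mathrm{Gal}(\widetilde{k}/k) : \mathfrak{D}_{\mathfrak{p}}] > [L_k \cap \widetilde{k} : k].
\]

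Next I would observe that both $\widetilde{k}^{\mathfrak{D}_{\mathfrak{p}^{\ast}}}$ and $L_k \cap \widetilde{k}$ lie inside $N_{\infty}$: the former because $\mathfrak{p}^{\ast}$ splits completely and hence is unramified there, and the latter via the identification $L_k \cap \widetilde{k} = k_{\infty}^a \cap N_{\infty}$ established in the proof of Lemma \ref{cond normal}. Since the subfields of the $\mathbb{Z}_p$-extension $N_{\infty}/k$ form a totally ordered chain under inclusion, the strict degree inequality above forces
\[
L_k \cap \widetilde{k} \subsetneq \widetilde{k}^{\mathfrak{D}_{\mathfrak{p}^{\ast}}},
\]
so that $\mathfrak{p}^{\ast}$ splits completely in $L_k \cap \widetilde{k}$. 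Running the parallel argument with $\mathfrak{p}$ replacing $\mathfrak{p}^{\ast}$ and $N_{\infty}^{\ast}$ replacing $N_{\infty}$ gives $L_k \cap \widetilde{k} \subsetneq \widetilde{k}^{\mathfrak{D}_{\mathfrak{p}}}$, whence $\mathfrak{p}$ also splits completely in $L_k \cap \widetilde{k}$. Combined, this yields the claim.

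The main obstacle, conceptual rather than technical, is to recognize that the non-normality hypothesis converts, via Lemma \ref{cond normal}, into a strict degree inequality between two fields which both happen to embed into a common $\mathbb{Z}_p$-extension of $k$; once this is seen, the proof reduces to a bare comparison of degrees in the chain of subfields of $N_{\infty}/k$ (respectively $N_{\infty}^{\ast}/k$).
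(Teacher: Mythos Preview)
Your proof is correct and follows essentially the same route as the paper. The paper invokes Lemma \ref{indexD} to identify $\widetilde{k}^{\mathfrak{D}_{\mathfrak{p}^{\ast}}}$ with $N_{\infty}^{D_{\mathfrak{p}^{\ast}}}$ and then argues by contradiction (if the decomposition field were contained in $L_k\cap\widetilde{k}\subset k_{\infty}^a$ it would be Galois over $\mathbb{Q}$), whereas you cite Lemma \ref{cond normal} directly to obtain the strict degree inequality; in both cases the conclusion comes from the total ordering of subfields of the $\mathbb{Z}_p$-extension $N_{\infty}/k$ (respectively $N_{\infty}^{\ast}/k$).
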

\begin{proof}
We have
$[\mathrm{Gal}(\widetilde{k}/k):\mathfrak{D_{p^{\ast}}}] = [\mathrm{Gal}(N_{\infty}/k):D_{\mathfrak{p}^{\ast}}]$
by Lemma \ref{indexD}.
Thus we have $N_{\infty}^{D_{\mathfrak{p^\ast}}} = \widetilde{k}^{\mathfrak{D_{p^\ast}}}$.
Since ${D_{\mathfrak{p^\ast}}}$ is not a normal subgroup of $\mathrm{Gal}(N_{\infty}/\mathbb{Q})$, we have
\[
N_{\infty}^{D_{\mathfrak{p^\ast}}} \supset L_{k} \cap \widetilde{k},
\quad N_{\infty}^{D_{\mathfrak{p^\ast}}} \neq L_{k} \cap \widetilde{k}.
\]
Indeed, if we suppose that $N_{\infty}^{D_{\mathfrak{p^\ast}}} \subset L_{k} \cap \widetilde{k}$,
then $N_{\infty}^{D_{\mathfrak{p^\ast}}}$ is a subfield of $k_{\infty}^a$.
Hence $N_{\infty}^{D_{\mathfrak{p^\ast}}}$ is galois over $\mathbb{Q}$.
Therefore ${{\mathfrak{D}}_{\mathfrak{p^\ast}}}$ is a normal subgroup of $\mathrm{Gal}(\widetilde{k}/\mathbb{Q})$.
This is a contradiction.
By the same reason, the inclusion
$N_{\infty}^{D_{\mathfrak{p}}}  \supset L_{k} \cap \widetilde{k}$ 
is proper.
These imply that $\mathfrak{p}$ and $\mathfrak{p^{\ast}}$ split completely in $L_k \cap \widetilde{k}$.
Thus we get the conclusion.
\end{proof}
For an algebraic extension $K/k$,
we denote by $M_{\mathfrak{p}}(K)$ (respectively, $M_{\mathfrak{p}^{\ast}}(K)$) the maximal pro-$p$ abelian extension of $K$
unramified outside all prime ideals of $K$ lying above
$\mathfrak{p}$ (respectively, $\mathfrak{p}^{\ast}$).
To prove Theorem \ref{main thm}, we will show two inequalities below
under the assumption that weak GGC does not hold for $p$ and $k$:
\begin{eqnarray*}\label{AB}
\begin{cases}
(\mathrm{A}) ~\mathrm{rank}_{\mathbb{Z}_p} (\mathrm{Gal} (M_{\mathfrak{p^{\ast}}}(N_{\infty})/ \widetilde{k}))
\leq \mathrm{min}\{ [L_k \cap \widetilde{k}:k], [\mathrm{Gal}(\widetilde{k}/k):\mathfrak{D_{p^\ast}}] \} -1,\\
(\mathrm{B})~\mathrm{rank}_{\mathbb{Z}_p} (\mathrm{Gal} (M_{\mathfrak{p^{\ast}}}(N_{\infty})/ \widetilde{k}))
\geq 
\mathrm{min}\{ [L_k \cap \widetilde{k}:k], [\mathrm{Gal}(\widetilde{k}/k):\mathfrak{D_{p^\ast}}] \},
\end{cases}
\end{eqnarray*}
where we denote by $\mathrm{rank}_{\mathbb{Z}_p}(\ast)$ the $\mathbb{Z}_p$-rank of $\ast$. 
If both (A) and (B) hold, then this is a contradiction.
Thus weak GGC holds for $p$ and $k$.
In Sections \ref{A} and \ref{B}, we will prove these inequalities.\par
Finally, we introduce the following proposition needed later. For convenience we include a proof.
\begin{prop}{\rm{(\cite[Theorem 2]{Fuj})}}\label{Fuj}
The following two conditions are equivalent:\\
$\mathrm{(i)}$
The Iwasawa module $X_{\widetilde{k}}$ has a non-trivial pseudo-null $\Lambda$-submodule.\\
$\mathrm{(ii)}$ We have $M_{\mathfrak{p^\ast}}(\widetilde{k}) \neq L_{\widetilde{k}}$.
\end{prop}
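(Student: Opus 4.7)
The plan is to express both conditions in terms of a single short exact sequence of $\Lambda$-modules coming from global class field theory, and then to invoke an Iwasawa-theoretic duality to match them up.

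First, set $Y := \mathrm{Gal}(M_{\mathfrak{p}^\ast}(\widetilde{k})/\widetilde{k})$ and $J := \mathrm{Gal}(M_{\mathfrak{p}^\ast}(\widetilde{k})/L_{\widetilde{k}})$, so that global class field theory yields the natural short exact sequence of $\Lambda$-modules
\begin{equation*}
0 \longrightarrow J \longrightarrow Y \longrightarrow X_{\widetilde{k}} \longrightarrow 0,
\end{equation*}
in which condition (ii) is exactly the assertion $J \neq 0$. The subgroup $J$ is generated by the images in $Y$ of the inertia subgroups at the primes of $\widetilde{k}$ above $\mathfrak{p}^\ast$; by Lemma \ref{Minardi} these primes are finite in number, so $J$ is a finitely generated $\Lambda$-module whose local structure is controlled by local class field theory at $\mathfrak{p}^\ast$.

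The core of the argument is a reflection/duality identification, in the spirit of Jannsen's Iwasawa-adjoint formalism, between the maximal pseudo-null $\Lambda$-submodule $X_{\widetilde{k}}^{\mathrm{pn}}$ of $X_{\widetilde{k}}$ and an invariant read off from $J$ that is non-zero precisely when $J$ is non-zero. A convenient way to package this is to establish that $Y$ itself has no non-trivial pseudo-null $\Lambda$-submodule: this is a two-variable reflection statement for the $\mathfrak{p}^\ast$-ramified tower, in which the absence of ramification at the primes above $\mathfrak{p}$ and the pseudo-null-freeness of the local pro-$p$ units at those primes are the key local inputs. Together with the snake-lemma-type analysis of the sequence above, this yields the canonical bijection between the triviality of $X_{\widetilde{k}}^{\mathrm{pn}}$ and that of $J$.

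Given the identification, the equivalence (i) $\Leftrightarrow$ (ii) is then formal. For (i) $\Rightarrow$ (ii), contrapositively, if $J = 0$ then $Y \cong X_{\widetilde{k}}$ inherits the pseudo-null-freeness of $Y$, forcing $X_{\widetilde{k}}^{\mathrm{pn}} = 0$. For (ii) $\Rightarrow$ (i), the duality step directly converts the non-vanishing of $J$ into the non-vanishing of $X_{\widetilde{k}}^{\mathrm{pn}}$.

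The main obstacle is precisely the reflection/duality step, i.e.\ the control of pseudo-null submodules of $Y$ and the identification of $X_{\widetilde{k}}^{\mathrm{pn}}$ with data coming from $J$. This is where the hypothesis that $p$ splits in the imaginary quadratic field $k$ enters essentially, since it determines the local structure at the two primes above $p$ and thus the behavior of $Y$ as a $\Lambda$-module; once this structural input is in hand, the rest of the argument is a diagram chase.
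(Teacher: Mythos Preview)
Your framework for (i) $\Rightarrow$ (ii) is correct and matches the paper: if $J=0$ then $X_{\widetilde{k}}\cong Y=X_{\mathfrak{p}^\ast}(\widetilde{k})$, and the latter has no non-trivial pseudo-null submodule (this is the result of Perrin-Riou \cite{PR} that the paper invokes).

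The direction (ii) $\Rightarrow$ (i), however, has a genuine gap. You assert that pseudo-null-freeness of $Y$ together with ``snake-lemma-type analysis'' and an unspecified ``reflection/duality'' step forces $X_{\widetilde{k}}^{\mathrm{pn}}\neq 0$ whenever $J\neq 0$. But this is false as a formal consequence of the exact sequence alone: take for instance $Y=\Lambda/(S)\oplus\Lambda/(S)$, $J=\Lambda/(S)\oplus 0$, $X=\Lambda/(S)$; here $Y$ is torsion and pseudo-null-free, $J\neq 0$, yet $X$ has no pseudo-null submodule. So some further arithmetic input is required, and your proposal does not identify it.

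The paper supplies exactly this missing ingredient. The key extra fact is the one-variable statement $M_{\mathfrak{p}^\ast}(k_{\infty}^c)=L_{k_{\infty}^c}$ (\cite[Lemma 2]{Fuj}), which via the descent diagrams of Lemma~\ref{Ozaki} forces the natural map
\[
X_{\mathfrak{p}^\ast}(\widetilde{k})/S\,X_{\mathfrak{p}^\ast}(\widetilde{k})\;\longrightarrow\;X_{\widetilde{k}}/S\,X_{\widetilde{k}}
\]
to be an isomorphism. Applying the snake lemma to multiplication by $S$ on the sequence $0\to J\to X_{\mathfrak{p}^\ast}(\widetilde{k})\to X_{\widetilde{k}}\to 0$ then gives a surjection $X_{\widetilde{k}}[S]\twoheadrightarrow J/SJ$. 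If $J\neq 0$, Nakayama's lemma gives $J/SJ\neq 0$, hence $X_{\widetilde{k}}[S]\neq 0$; and $X_{\widetilde{k}}[S]$ is pseudo-null since it is annihilated by both $S$ and the element $f(S,T)$ of Lemma~\ref{f(S,T)}, which are coprime. This concrete $S$-torsion argument, not a general duality principle, is what makes (ii) $\Rightarrow$ (i) work.
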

\begin{proof}
We first suppose that $M_{\mathfrak{p^\ast}}(\widetilde{k}) = L_{\widetilde{k}}$.
Then we have $X_{\frak{p}^{\ast}}(\widetilde{k})=X_{\widetilde{k}}$,
where $X_{\mathfrak{p}^{\ast}}(\widetilde{k})$ is the galois group of the extension
$M_{\mathfrak{p^\ast}}(\widetilde{k}) / \widetilde{k}$.
By (\cite{PR}), $X_{\mathfrak{p}^{\ast}}(\widetilde{k})$ has no non-trivial pseudo-null $\Lambda$-submodule.
Hence $\mathrm{(i)}$ implies that $\mathrm{(ii)}$ holds.\par
Next we suppose that $\mathrm{(ii)}$ holds.
Using Lemma $2$ in \cite{Fuj},
we obtain
$M_{\mathfrak{p^{\ast}}}(k_{\infty}^c) =L_{k_{\infty}^c}$.
Hence we have a commutative diagram of $\mathbb{Z}_p[[T]]$-modules:
\begin{eqnarray*}\label{diag}
\begin{CD}
0 @>>> X_{\mathfrak{p^{\ast}}}(\widetilde{k})/S  X_{\mathfrak{p^{\ast}}}(\widetilde{k})  @>>> X_{k_{\infty}^c} @>>> \mathrm{Gal}(\widetilde{k}/k_{\infty}^c) @>>> 0
\\
& & @V   VV @| @\vert    & 
\\
0 @>>>X_{\widetilde{k}}/S X_{\widetilde{k}}@>>> X_{k_{\infty}^c} @>>> \mathrm{Gal}(\widetilde{k}/k_{\infty}^c) @>>> 0.
\end{CD}
\end{eqnarray*}
This implies that
\begin{eqnarray}\label{LCT}
X_{\mathfrak{p^{\ast}}}(\widetilde{k})/S  X_{\mathfrak{p^{\ast}}}(\widetilde{k}) 
\cong X_{\widetilde{k}}/S X_{\widetilde{k}} \cong \mathbb{Z}_p^{\oplus \lambda^{\ast}}.
\end{eqnarray}
In the case of $\lambda(k_{\infty}^c /k) =1$, we have proved that
$X_{\widetilde{k}}=0$ by Lemma \ref{trivial}.
Then we have $X_{\mathfrak{p^{\ast}}}(\widetilde{k}) =0$ by (\ref{LCT}).
This is a contradiction.
Hence we have $\lambda(k_{\infty}^c /k) \geq 2$.
In this case, we see that $X_{\widetilde{k}}$ is not trivial.
Hence, without loss of generality, we may assume that $f(S,T)$ annihilates $X_{\mathfrak{p^{\ast}}}(\widetilde{k})$,
where $f(S,T)$ is the same power series defined in Lemma \ref{f(S,T)}.
We put $Y=\mathrm{Gal}(M_{\mathfrak{p^\ast}}(\widetilde{k}) / L_{\widetilde{k}})$.
Then we have a commutative diagram of $\Lambda$-modules:
\begin{eqnarray*}\label{diag}
\begin{CD}
0 @>>> Y @>>>X_{\mathfrak{p^{\ast}}}(\widetilde{k}) @>>>X_{\widetilde{k}} @>>> 0
\\
& & @V ~S\times VV  @V ~S\times VV   @V  ~S\times VV & 
\\
0 @>>> Y @>>> X_{\mathfrak{p^{\ast}}}(\widetilde{k}) @>>> X_{\widetilde{k}} @>>> 0,
\end{CD}
\end{eqnarray*}
where the vertical maps are multiplication by $S$.
Using (\ref{LCT}), we have a surjective homomorphism
\[
X_{\widetilde{k}} [S] \rightarrow Y/SY,
\]
where $X_{\widetilde{k}} [S]$ is the $S$-torsion subgroup of $X_{\widetilde{k}}$.
By Nakayama's lemma, $X_{\widetilde{k}} [S] $ is not trivial because of the assumption that $Y \neq 0$.
Furthermore, $X_{\widetilde{k}} [S]$ is pseudo-null since $S$ is coprime to $f(S,T)$.
Therefore $X_{\widetilde{k}}$ has a non-trivial pseudo-null submodule.
Thus we get the conclusion.
\end{proof}
\section{Proof of the inequality (A)} \label{A}
In this section, we will prove the inequality (A):
\begin{eqnarray*}
\mathrm{rank}_{\mathbb{Z}_p} (\mathrm{Gal} (M_{\mathfrak{p^{\ast}}}(N_{\infty})/ \widetilde{k}))
 \leq \mathrm{min}\{ [L_k \cap \widetilde{k}:k], [\mathrm{Gal}(\widetilde{k}/k):\mathfrak{D_{p^\ast}}] \} -1
\end{eqnarray*}
under the assumption
of Theorem \ref{main thm} and assuming that
weak GGC does not hold for $p$ and $k$.\par
By Lemma \ref{Minardi}, the prime ideal $\frak{p}^{\ast}$ finitely decomposes in $\widetilde{k}/k$.
We put $p^{n_0} = [\mathrm{Gal}(\widetilde{k}/k) : \mathfrak{D_{p^{\ast}}}] $.
Using Lemma \ref{indexD}, we have
$$
p^{n_0}
= [\mathrm{Gal}(\widetilde{k}/k) : \mathfrak{D_{p^{\ast}}}] 
= [\mathrm{Gal}(N_{\infty}/k) : D_{\mathfrak{p^{\ast}}}].
$$
Let $k_{\infty}/k$ be a $\mathbb{Z}_p$-extension.
For each $n\geq 0 $, we denote by $k_n$ the intermediate field of the $\mathbb{Z}_p$-extension $k_\infty$
such that $k_n$ is the unique cyclic extension over $k$ of degree $p^n$.
Let $K/k$ be an algebraic extension.
For a prime ideal $\mathfrak{q}$ of $K$,
let $K_{\mathfrak{q}}$ be the completion of $K$ at $\mathfrak{q}$.
We denote by $U_{\mathfrak{q}}^{(1)}$ the principal local unit group with respect to $\mathfrak{q}$
in $K_{\mathfrak{q}}$.\\
\par
\subsection{Normal case}~
In this subsection, we will prove the inequality (A)
in the case where $\mathfrak{D_{p}}$ is a normal subgroup of $\mathrm{Gal}(\widetilde{k}/\mathbb{Q})$.\par
\begin{prop}\label{rank uper}
Assume the same condition $\mathrm{(i)}$ as in Theorem \ref{main thm}. 
Assume also that weak {\rm{GGC}} does not hold for $p$ and $k$
and that
$\mathfrak{D_{p}}$ is a normal subgroup of $\mathrm{Gal}(\widetilde{k}/\mathbb{Q})$.
Then the inequality $\mathrm{(A)}$ holds.
\end{prop}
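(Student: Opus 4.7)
The plan is to leverage Proposition \ref{Fuj}: under the assumption that weak GGC does not hold, we have $M_{\mathfrak{p}^{\ast}}(\widetilde{k}) = L_{\widetilde{k}}$. First I would show $M_{\mathfrak{p}^{\ast}}(N_{\infty}) \subset L_{\widetilde{k}}$. Since the inertia subgroups $I_{\mathfrak{p}}$ and $I_{\mathfrak{p}^{\ast}}$ in $\mathrm{Gal}(\widetilde{k}/k) \cong \mathbb{Z}_p^{\oplus 2}$ are two distinct $\mathbb{Z}_p$-rank-one subgroups (as $N_{\infty} \neq N_{\infty}^{\ast}$), their intersection is trivial; hence primes of $N_{\infty}$ above $\mathfrak{p}$ are unramified in $\widetilde{k}/N_{\infty}$, and the latter extension is abelian pro-$p$ and unramified outside $\mathfrak{p}^{\ast}$, so $\widetilde{k} \subset M_{\mathfrak{p}^{\ast}}(N_{\infty})$. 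Moreover $M_{\mathfrak{p}^{\ast}}(N_{\infty})/\widetilde{k}$ is abelian (as a subextension of an abelian extension of $N_{\infty}$) and unramified outside $\mathfrak{p}^{\ast}$, which forces $M_{\mathfrak{p}^{\ast}}(N_{\infty}) \subset M_{\mathfrak{p}^{\ast}}(\widetilde{k}) = L_{\widetilde{k}}$. In particular $M_{\mathfrak{p}^{\ast}}(N_{\infty})/\widetilde{k}$ is everywhere unramified.

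Next I would identify $\mathrm{Gal}(M_{\mathfrak{p}^{\ast}}(N_{\infty})/\widetilde{k})$ as a coinvariant module. Setting $\Gamma^{\ast} = \mathrm{Gal}(\widetilde{k}/N_{\infty}) \cong \mathbb{Z}_p$, the short exact sequence
\[
1 \longrightarrow X_{\widetilde{k}} \longrightarrow \mathrm{Gal}(L_{\widetilde{k}}/N_{\infty}) \longrightarrow \Gamma^{\ast} \longrightarrow 1
\]
splits because $\Gamma^{\ast}$ is a free pro-$p$ group of rank one, giving $\mathrm{Gal}(L_{\widetilde{k}}/N_{\infty}) \cong X_{\widetilde{k}} \rtimes \Gamma^{\ast}$. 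Since by the first step $M_{\mathfrak{p}^{\ast}}(N_{\infty})$ is the maximal abelian subextension of $L_{\widetilde{k}}$ over $N_{\infty}$, its Galois group over $N_{\infty}$ is the abelianization $(X_{\widetilde{k}} \rtimes \Gamma^{\ast})^{\mathrm{ab}} \cong (X_{\widetilde{k}})_{\Gamma^{\ast}} \oplus \Gamma^{\ast}$. Taking the kernel of the projection onto $\Gamma^{\ast} = \mathrm{Gal}(\widetilde{k}/N_{\infty})$ yields
\[
\mathrm{Gal}(M_{\mathfrak{p}^{\ast}}(N_{\infty})/\widetilde{k}) \cong (X_{\widetilde{k}})_{\Gamma^{\ast}}.
\]

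The remaining task is to prove $\mathrm{rank}_{\mathbb{Z}_p}(X_{\widetilde{k}})_{\Gamma^{\ast}} \leq p^{n_0} - 1$. For this I would apply class field theory to the $\mathbb{Z}_p$-extension $N_{\infty}/k$. Assumption (i) together with the known vanishing $\mu(N_{\infty}/k) = 0$ (Gillard--Schneps, Oukhaba--Vigui\'{e}) implies that $X_{N_{\infty}}$ is finite. The standard exact sequence
\[
\bigoplus_{\mathfrak{P} \mid \mathfrak{p}^{\ast}} I_{\mathfrak{P}} \longrightarrow X_{\mathfrak{p}^{\ast}}(N_{\infty}) \longrightarrow X_{N_{\infty}} \longrightarrow 0,
\]
where $X_{\mathfrak{p}^{\ast}}(N_{\infty}) := \mathrm{Gal}(M_{\mathfrak{p}^{\ast}}(N_{\infty})/N_{\infty})$, then shows that the $\mathbb{Z}_p$-rank of $X_{\mathfrak{p}^{\ast}}(N_{\infty})$ is controlled by the sum of local inertia. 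By Lemma \ref{indexD}, $\mathfrak{p}^{\ast}$ splits into exactly $p^{n_0}$ primes in $N_{\infty}$, each with local completion the unramified $\mathbb{Z}_p$-extension of $\mathbb{Q}_p$; combining local class field theory with the global unit relations yields $\mathrm{rank}_{\mathbb{Z}_p} X_{\mathfrak{p}^{\ast}}(N_{\infty}) \leq p^{n_0}$. Subtracting the rank-one contribution of $\Gamma^{\ast}$ in the decomposition $X_{\mathfrak{p}^{\ast}}(N_{\infty}) \cong (X_{\widetilde{k}})_{\Gamma^{\ast}} \oplus \Gamma^{\ast}$ gives the desired bound, and since $\mathfrak{D_p}$ is normal, Proposition \ref{index} supplies $\min\{[L_k \cap \widetilde{k}:k],\, p^{n_0}\} = p^{n_0}$, proving (A).

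The main obstacle will be establishing the sharp bound $\mathrm{rank}_{\mathbb{Z}_p} X_{\mathfrak{p}^{\ast}}(N_{\infty}) \leq p^{n_0}$: naive local-unit rank counts overcount, and one must combine the finiteness of $X_{N_{\infty}}$ (from assumption (i)) with a careful Leopoldt-type analysis of the closure of the global units of $N_{\infty}$ inside the principal local units at the $p^{n_0}$ primes above $\mathfrak{p}^{\ast}$.
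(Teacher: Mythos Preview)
Your approach coincides with the paper's, and the argument is correct. The obstacle you anticipate, however, is illusory: having already shown that $M_{\mathfrak{p}^{\ast}}(N_{\infty})/\widetilde{k}$ is unramified, you get for free that each inertia subgroup $I_{\mathfrak{P}}\subset \mathrm{Gal}(M_{\mathfrak{p}^{\ast}}(N_{\infty})/N_{\infty})$ at a prime above $\mathfrak{p}^{\ast}$ meets $\mathrm{Gal}(M_{\mathfrak{p}^{\ast}}(N_{\infty})/\widetilde{k})$ trivially and hence injects into $\Gamma^{\ast}\cong\mathbb{Z}_p$; thus each $I_{\mathfrak{P}}$ has $\mathbb{Z}_p$-rank at most $1$, and combined with the finiteness of $X_{N_{\infty}}$ your exact sequence gives $\mathrm{rank}_{\mathbb{Z}_p}X_{\mathfrak{p}^{\ast}}(N_{\infty})\le p^{n_0}$ directly---this is exactly the paper's argument, with no global-unit or Leopoldt analysis required. (For the final identification of the minimum, the relevant input is Lemma~\ref{cond normal} rather than Proposition~\ref{index}.)
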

\begin{proof}
We assume that weak GGC does not hold for $p$ and $k$.
By Proposition \ref{Fuj}, we have $M_{\mathfrak{p^\ast}}(\widetilde{k}) = L_{\widetilde{k}}$.
By Lemma \ref{cond normal},
$p$ splits completely in $\widetilde{k}^{\mathfrak{D}_{{\mathfrak{p}}}}$.
Since $\mathfrak{D_{p}}$ is a normal subgroup of $\mathrm{Gal}(\widetilde{k}/\mathbb{Q})$,
we have 
$\widetilde{k}^{\mathfrak{D}_{{\mathfrak{p}}}}=\widetilde{k}^{\mathfrak{D}_{{\mathfrak{p}}^{\ast}}}$.
Let
\[
\{ \mathfrak{p}_{\infty,i}^{\ast} ~|~ 1 \leq  i \leq p^{n_0}\}
\]
be the set of prime ideals of $N_{\infty}$ lying above $\mathfrak{p}^\ast$.
We denote by $ I_{\mathfrak{p}_{\infty,i}^{\ast}}$ the inertia subgroup of $\mathrm{Gal}(M_{\mathfrak{p^{\ast}}}(N_{\infty})/N_{\infty})$
for the prime ideal $\mathfrak{p}_{\infty,i}^{\ast}$.
Then we have an exact sequence
\[
0 \rightarrow  \displaystyle{\sum_{i=1}^{p^{n_0}} I_{\mathfrak{p}_{\infty,i}^{\ast}}} \rightarrow \mathrm{Gal}(M_{\mathfrak{p^{\ast}}}(N_{\infty})/ N_{\infty})
\rightarrow \mathrm{Gal}(L_{N_{\infty}}/ N_{\infty}) \rightarrow 0.
\]
Since $M_{\mathfrak{p^{\ast}}}(N_{\infty}) / \widetilde{k}$ is an unramified extension,
we have $\mathrm{Gal}(M_{\mathfrak{p^\ast}}(N_{\infty})/\widetilde{k}) \cap I_{\mathfrak{p}_{\infty,i}^{\ast}} =1$.
Hence $I_{\mathfrak{p}_{\infty,i}^{\ast}}$ is isomorphic to $\mathbb{Z}_p$ for each $i$.
From \cite{Gi, sch, OV},
we obtain $\mu(N_{\infty}/k)=0.$
Furthermore,
since we suppose that $\lambda({N_{\infty}}/k)=0$, $\mathrm{Gal}(L_{N_{\infty}}/N_{\infty})$ is finite.
This implies that
$\mathrm{Gal}(M_{\mathfrak{p^{\ast}}}(N_{\infty})/ N_{\infty})$ is a finitely generated $\mathbb{Z}_p$-module.
Therefore we obtain
$\mathrm{rank}_{\mathbb{Z}_p}(\mathrm{Gal}(M_{\mathfrak{p^{\ast}}}(N_{\infty})/ \widetilde{k})) \leq p^{n_0}-1$.
By Lemma \ref{cond normal},
we have $p^{n_0} \leq [L_k \cap \widetilde{k} :k]$.
Thus we get the conclusion.
\end{proof}~\\
\subsection{Non-normal case}~
In this subsection, we will prove the inequality (A) in the case where $\mathfrak{D_{p}}$ is not a normal subgroup of $\mathrm{Gal}(\widetilde{k}/\mathbb{Q})$.
\begin{prop}\label{rank n0}
With the same notation as above, we have
\[
\mathrm{rank}_{\mathbb{Z}_p} (\mathrm{Gal} (M_{{\mathfrak{p^{\ast}}}}(N_{n_0})/ N_{n_0})) =1.
\]
\end{prop}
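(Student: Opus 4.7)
The plan is to compute $\mathrm{rank}_{\mathbb{Z}_p}(\mathrm{Gal}(M_{\mathfrak{p}^{\ast}}(N_{n_0})/N_{n_0}))$ directly via class field theory. Writing $F = N_{n_0}$ and $S = \{\mathfrak{q} \mid \mathfrak{p}^{\ast}\}$, the relevant input is the standard exact sequence
\[
0 \to \prod_{\mathfrak{q} \in S} U^{(1)}_{\mathfrak{q}}/\overline{E_F} \to \mathrm{Gal}(M_{\mathfrak{p}^{\ast}}(F)/F) \to \mathrm{Cl}(F)^{(p)} \to 0,
\]
where $\overline{E_F}$ is the closure of the diagonal image of the global unit group in the local unit product.

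First I would pin down the decomposition of $\mathfrak{p}^{\ast}$ in $F/k$: by the definition of $n_0$ together with Lemma \ref{indexD}, the decomposition group $D_{\mathfrak{p}^{\ast}}$ in $\mathrm{Gal}(N_{\infty}/k)$ has index $p^{n_0}$, so $\mathfrak{p}^{\ast}$ splits completely into $p^{n_0}$ primes in $F$, each with $e=f=1$. Hence $\prod_{\mathfrak{q} \in S} U^{(1)}_{\mathfrak{q}}$ has $\mathbb{Z}_p$-rank $p^{n_0}$. Since $k \subset F$ is imaginary quadratic, $F$ is totally complex with $r_2 = p^{n_0}$, so $E_F$ has $\mathbb{Z}_p$-rank $p^{n_0} - 1$, and $\mathrm{Cl}(F)^{(p)}$ is finite. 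Thus $\mathrm{rank}_{\mathbb{Z}_p}(\mathrm{Gal}(M_{\mathfrak{p}^{\ast}}(F)/F)) = p^{n_0} - \mathrm{rank}_{\mathbb{Z}_p}(\overline{E_F})$.

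The lower bound $\mathrm{rank}_{\mathbb{Z}_p}(\mathrm{Gal}(M_{\mathfrak{p}^{\ast}}(F)/F)) \geq 1$ is immediate from the inclusion $N_{\infty}^{\ast} \cdot F \subset M_{\mathfrak{p}^{\ast}}(F)$, since $N_{\infty}^{\ast}/k$ is a $\mathbb{Z}_p$-extension ramified only at $\mathfrak{p}^{\ast}$ and disjoint from $N_{\infty}$; this gives $\mathrm{rank}_{\mathbb{Z}_p}(\overline{E_F}) \leq p^{n_0} - 1$. For the matching upper bound, I would invoke the hypothesis $\lambda(N_{\infty}/k) = 0$ together with the known $\mu(N_{\infty}/k) = 0$ from \cite{Gi, sch, OV} to deduce that $X_{N_{\infty}}$ is finite. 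Then I would pass to the Iwasawa-theoretic exact sequence
\[
0 \to \mathcal{U}^{\ast}_{\infty}/\overline{\mathcal{E}_{\infty}} \to X_{\mathfrak{p}^{\ast}}(N_{\infty}) \to X_{N_{\infty}} \to 0
\]
of $\mathbb{Z}_p[[\mathrm{Gal}(N_{\infty}/k)]]$-modules, where $\mathcal{U}^{\ast}_{\infty} = \varprojlim_n \prod_{\mathfrak{q}_n \mid \mathfrak{p}^{\ast}} U^{(1)}_{\mathfrak{q}_n}$ (using that the $p^{n_0}$ primes above $\mathfrak{p}^{\ast}$ in $N_{n_0}$ are inert in $N_{\infty}/N_{n_0}$). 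Taking $\mathrm{Gal}(N_{\infty}/N_{n_0})$-coinvariants and comparing with the level-$N_{n_0}$ class field sequence, the finiteness of $X_{N_{\infty}}$ forces $\mathrm{rank}_{\mathbb{Z}_p}(\overline{E_F}) = p^{n_0} - 1$, giving the desired upper bound.

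The main obstacle is establishing this full-rank embedding of units into the $\mathfrak{p}^{\ast}$-local unit product: it amounts to a form of Leopoldt's conjecture restricted to $\mathfrak{p}^{\ast}$-primes. Since $F = N_{n_0}$ need not be abelian over $\mathbb{Q}$ in the non-normal case, Brumer's theorem is unavailable, and one must exploit the specific control provided by the hypothesis $\lambda(N_{\infty}/k) = 0$, rather than any general Leopoldt-type input, to conclude.
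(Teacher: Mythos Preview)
Your setup via the class-field-theory exact sequence is exactly right and matches the paper. The gap is in your assessment of Brumer's theorem. You write that ``$F=N_{n_0}$ need not be abelian over $\mathbb{Q}$ in the non-normal case, [so] Brumer's theorem is unavailable.'' This is the crucial mistake: Brumer's result \cite{Bu} establishes Leopoldt not only for abelian extensions of $\mathbb{Q}$ but also for abelian extensions of an imaginary quadratic field. Since $N_{n_0}/k$ is cyclic and $k$ is imaginary quadratic, Brumer applies directly, and this is precisely what the paper invokes. The paper's proof is then a one-line rank count: $\sum_i [(N_{n_0})_{\mathfrak{p}^\ast_{n_0,i}}:\mathbb{Q}_p] - \mathrm{rank}_{\mathbb{Z}_p}(\overline{\varphi(E_{N_{n_0}})}) = p^{n_0}-(p^{n_0}-1)=1$, with no appeal to $\lambda(N_\infty/k)=0$ at all. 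So Proposition~\ref{rank n0} is unconditional in the paper, whereas your route would make it conditional.

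You are right to flag a subtlety, though: what is actually needed is that the diagonal image of $E_{N_{n_0}}$ in the \emph{$\mathfrak{p}^\ast$-adic} completions alone already has full rank $p^{n_0}-1$, which is formally stronger than ordinary Leopoldt (projection to half the primes could in principle drop rank). The paper glosses over this by simply citing Leopoldt, but the stronger statement also follows from Brumer's $p$-adic Baker theorem: since $\mathfrak{p}^\ast$ splits completely in $N_{n_0}$, the target $\prod_{\mathfrak{q}\mid\mathfrak{p}^\ast}U^{(1)}_{\mathfrak{q}}\otimes\mathbb{Q}_p$ is a free $\mathbb{Q}_p[G]$-module of rank one (with $G=\mathrm{Gal}(N_{n_0}/k)$), and $E_{N_{n_0}}\otimes\mathbb{Q}_p$ is the augmentation ideal; decomposing by characters, the map on each nontrivial $\chi$-component is multiplication by $\sum_{\sigma\in G}\chi(\sigma)^{-1}\log_p\sigma(u)$ for a Minkowski unit $u$, and Brumer's linear-independence of $p$-adic logarithms over $\overline{\mathbb{Q}}$ forces this to be nonzero. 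Your proposed detour through $X_{N_\infty}$ and coinvariants is therefore unnecessary, and in any case the comparison you sketch between $(X_{\mathfrak{p}^\ast}(N_\infty))_{\mathrm{Gal}(N_\infty/N_{n_0})}$ and $\mathrm{Gal}(M_{\mathfrak{p}^\ast}(N_{n_0})/N_{n_0})$ is not clean, since $N_\infty/N_{n_0}$ is ramified at primes above $\mathfrak{p}$.
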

\begin{proof}
Let $\mathcal{O}_{N_{n_0}}$ be the ring of integers in $N_{n_0}$ and
\[
\{ \mathfrak{p}_{n_0,i}^{\ast} ~|~ 1 \leq  i \leq p^{n_0}\}
\]
be the set of prime ideals of $N_{n_0}$ lying above $\mathfrak{p}^\ast$.
We put $E_{N_{n_0}}=\{ u \in \mathcal{O}_{N_{n_0}}^{\times}~|~u \equiv 1 ~\mathrm{mod}~\frak{p}_{n_0,i}^{\ast}~(1 \leq i \leq p^{n_0}) \}$.
Since $N_{n_0}/k$ is an abelian extension,
Leopoldt's conjecture holds for $p$ and $N_{n_0}$ (\cite{Bu}).
Using class field theory, we obtain
\begin{eqnarray*}
\mathrm{rank}_{\mathbb{Z}_p}(\mathrm{Gal}(M_{\mathfrak{p^{\ast}}}(N_{n_0})/N_{n_0}))
&=&\mathrm{rank}_{\mathbb{Z}_p}(\mathrm{Gal}(M_{\mathfrak{p^{\ast}}}(N_{n_0})/L_{N_{n_0}}))\\
&=&\mathrm{rank}_{\mathbb{Z}_p}\left(
\prod_{i=1}^{p^{n_0}} U_{\mathfrak{p}_{n_0,i}^{\ast}}^{(1)}/\overline{\varphi (E_{N_{n_0}})} \right)\\
&=&\mathrm{rank}_{\mathbb{Z}_p} \left(
\sum_{i=1}^{p^{n_0}}[(N_{n_0})_{\mathfrak{p}_{n_0,i}^{\ast}} : \mathbb{Q}_p] \right)
- \mathrm{rank}_{\mathbb{Z}_p}(\overline{\varphi (E_{N_{n_0}})})\\
&=&1,
\end{eqnarray*}
where
$\varphi : E_{N_{n_0}} \rightarrow \displaystyle{\prod_{i=1}^{p^{n_0}}U_{\mathfrak{p}_{n_0,i}^{\ast}}^{(1)}}$
is the diagonal map,
and $\overline{\varphi (E_{N_{n_0}})}$ is the topological closure of $\varphi (E_{N_{n_0}})$
in $\displaystyle{\prod_{i=1}^{p^{n_0}} U_{\mathfrak{p}_{n_0,i}^{\ast}}^{(1)}}$.
Thus we get the conclusion.
\end{proof}
We obtain the inequality (A) by the following
\begin{prop}\label{(A)}
Assume the same condition $\mathrm{(i)}$ as in Theorem \ref{main thm}.
Assume also that weak {\rm{GGC}} does not hold for $p$ and $k$
and that
$\mathfrak{D_{p}}$ is not a normal subgroup of $\mathrm{Gal}(\widetilde{k}/\mathbb{Q})$.
Then we have the inequality $\mathrm{(A)}$.
\end{prop}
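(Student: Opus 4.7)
The plan is to parallel the proof of Proposition \ref{rank uper} but to exploit the fact that, in the non-normal situation, the prime $p$ splits completely in $F := L_k \cap \widetilde{k}$, which will lead to a sharper upper bound. Assuming that weak GGC fails, Proposition \ref{Fuj} gives $M_{\mathfrak{p}^{\ast}}(\widetilde{k}) = L_{\widetilde{k}}$, so $M_{\mathfrak{p}^{\ast}}(N_{\infty})/\widetilde{k}$ is unramified; each inertia subgroup $I_{\mathfrak{p}^{\ast}_{\infty,i}}$ in $Y := \mathrm{Gal}(M_{\mathfrak{p}^{\ast}}(N_{\infty})/N_{\infty})$ therefore meets $Z := \mathrm{Gal}(M_{\mathfrak{p}^{\ast}}(N_{\infty})/\widetilde{k})$ trivially and surjects onto $\mathrm{Gal}(\widetilde{k}/N_{\infty})$, and is thus isomorphic to $\mathbb{Z}_p$. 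By assumption (i) of Theorem \ref{main thm} together with $\mu(N_{\infty}/k) = 0$, $X_{N_{\infty}}$ is finite, so $\mathrm{rank}_{\mathbb{Z}_p}(Z) = \mathrm{rank}_{\mathbb{Z}_p}(Y) - 1$. By Propositions \ref{split} and \ref{index}, the minimum appearing in $(\mathrm{A})$ equals $p^{\ell} := [F:k]$ in the non-normal case, hence it suffices to prove $\mathrm{rank}_{\mathbb{Z}_p}(Y) \leq p^{\ell}$.

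The new input is to apply the class-field-theoretic argument of Proposition \ref{rank n0} to $F$ itself, in place of $N_{n_0}$. Since $F/k$ is a finite abelian $p$-extension, Leopoldt's conjecture at $p$ holds for $F$ by Brumer's theorem; and since $p$ splits completely in $F$ by Proposition \ref{split}, $F$ has $p^{\ell}$ primes above $\mathfrak{p}^{\ast}$, each of residue degree one. The same computation as in Proposition \ref{rank n0} then yields $\mathrm{rank}_{\mathbb{Z}_p}(\mathrm{Gal}(M_{\mathfrak{p}^{\ast}}(F)/F)) = p^{\ell} - (r_1(F) + r_2(F) - 1) = 1$. The $\mathbb{Z}_p$-extension $N_{\infty}^{\ast}/F$ is unramified outside primes above $\mathfrak{p}^{\ast}$, so $N_{\infty}^{\ast} \subset M_{\mathfrak{p}^{\ast}}(F)$, whence $\widetilde{k} = N_{\infty} N_{\infty}^{\ast} \subset M_{\mathfrak{p}^{\ast}}(F) \cdot N_{\infty}$; combined with $M_{\mathfrak{p}^{\ast}}(F) \cap N_{\infty} = F$ (the intersection is unramified over $F$ at primes above $\mathfrak{p}$, while $N_{\infty}/F$ is totally ramified there), this produces an exact sequence of $\Lambda$-modules
\[
0 \to K \to Y \to Y_F \to 0,
\]
where $Y_F := \mathrm{Gal}(M_{\mathfrak{p}^{\ast}}(F) \cdot N_{\infty}/N_{\infty}) \cong \mathrm{Gal}(M_{\mathfrak{p}^{\ast}}(F)/F)$ has $\mathbb{Z}_p$-rank one, and $K := \mathrm{Gal}(M_{\mathfrak{p}^{\ast}}(N_{\infty})/M_{\mathfrak{p}^{\ast}}(F) \cdot N_{\infty})$.

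The main obstacle is then to prove $\mathrm{rank}_{\mathbb{Z}_p}(K) \leq p^{\ell} - 1$, after which $\mathrm{rank}_{\mathbb{Z}_p}(Y) \leq p^{\ell}$ and hence $(\mathrm{A})$ follow. I plan to attack this by analysing the partition of the $p^{n_0}$ primes of $N_{\infty}$ above $\mathfrak{p}^{\ast}$ into $p^{\ell}$ families of $p^{n_0 - \ell}$ primes each, one family sitting above each of the $p^{\ell}$ primes of $F$ above $\mathfrak{p}^{\ast}$. Local class field theory applied to the local extensions $(N_{\infty})_{\mathfrak{p}^{\ast}_{\infty,i}}/F_{\mathfrak{p}^{\ast}_{F,j(i)}}$, which are the unramified $\mathbb{Z}_p$-extension of $\mathbb{Q}_p$, forces the image in $Y_F$ of an inertia generator $\tau_i \in I_{\mathfrak{p}^{\ast}_{\infty,i}}$ to depend only on the family of $\mathfrak{p}^{\ast}_{\infty,i}$. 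Consequently $K$ is generated, modulo a finite piece, by the differences $\tau_i - \tau_{i'}$ within each family, and a careful $\Lambda$-module analysis using the annihilator $\omega_{\ell}(T) = (1+T)^{p^{\ell}} - 1$ of $Y_F$ arising from the trivial action of $\mathrm{Gal}(N_{\infty}/F)$, together with the Leopoldt-type relations already encoded in $\mathrm{rank}_{\mathbb{Z}_p}(Y_F) = 1$, is expected to yield the required bound.
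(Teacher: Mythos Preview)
Your proposal is not a complete proof: you explicitly leave the crucial bound $\mathrm{rank}_{\mathbb{Z}_p}(K)\le p^{\ell}-1$ as something you ``plan to attack'' and ``expect'' to obtain. Everything up through the exact sequence $0\to K\to Y\to Y_F\to 0$ with $\mathrm{rank}_{\mathbb{Z}_p}(Y_F)=1$ is fine, but that sequence only transforms the target inequality $\mathrm{rank}_{\mathbb{Z}_p}(Y)\le p^{\ell}$ into $\mathrm{rank}_{\mathbb{Z}_p}(K)\le p^{\ell}-1$; no progress has been made. Moreover, the ingredients you list do not produce this bound. The within--family differences $\tau_i-\tau_{i'}$ account for at most $p^{n_0}-p^{\ell}$ independent directions, and the relation $\omega_{\ell}(T)\,Y_F=0$ only says that $\omega_{\ell}(T)\,Y\subset K$, which enlarges $K$ rather than bounding it. Nothing in your sketch forces the $p^{n_0}$ inertia generators $\tau_i$ to span a $\mathbb{Z}_p$-module of rank at most $p^{\ell}$; a priori $\mathrm{rank}_{\mathbb{Z}_p}(Y)$ could be as large as $p^{n_0}$, and hence $\mathrm{rank}_{\mathbb{Z}_p}(K)$ as large as $p^{n_0}-1$.

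The paper obtains the missing collapse by a different mechanism. Using the injection $I_{\mathfrak{p}^{\ast}_{\infty,i}}\hookrightarrow \mathrm{Gal}(\widetilde{k}/N_{\infty})$ and the fact that $\widetilde{k}/N_{n_0}$ is abelian, one sees that $D_{\mathfrak{p}^{\ast}}=\mathrm{Gal}(N_{\infty}/N_{n_0})$ acts trivially on each $I_{\mathfrak{p}^{\ast}_{\infty,i}}$, hence on the free part of $Y$; equivalently, the quotient of $M_{\mathfrak{p}^{\ast}}(N_{\infty})$ by the $\mathbb{Z}_p$-torsion of $Y$ is abelian over $N_{n_0}$. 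One then switches from primes above $\mathfrak{p}^{\ast}$ to the inertia groups at the primes of $N_{n_0}$ above $\mathfrak{p}$, of which there are only $p^{s}$ (since $\mathfrak{p}$ splits completely in $L_k\cap\widetilde{k}$ but no further along $N_{\infty}$). Their fixed field lies inside $M_{\mathfrak{p}^{\ast}}(N_{n_0})$, which has $\mathbb{Z}_p$-rank $1$ by Proposition~\ref{rank n0}, so $\mathrm{rank}_{\mathbb{Z}_p}\bigl(\mathrm{Gal}(M_{\mathfrak{p}^{\ast}}(N_{\infty})/N_{n_0})\bigr)\le p^{s}+1$, and subtracting $\mathrm{rank}_{\mathbb{Z}_p}(\mathrm{Gal}(\widetilde{k}/N_{n_0}))=2$ gives the desired $p^{s}-1$. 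The step you are missing is precisely this trivial-action/abelian-over-$N_{n_0}$ observation, which is what cuts the number of relevant inertia groups from $p^{n_0}$ down to $p^{s}$.
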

\begin{proof}
We note that $\mathfrak{p^{\ast}}$ splits completely in $N_{\infty}^{ D_{\mathfrak{p^{\ast}}}} = N_{n_0}$
and that all prime ideals of $ N_{n_0}$
lying above $\mathfrak{p}^{\ast}$ do not split in $N_{\infty}$.
Let
\[
\{ \mathfrak{p}_{\infty,i}^{\ast} ~|~ 1 \leq  i \leq p^{n_0}\}
\]
be the set of prime ideals of $N_{\infty}$ lying above $\mathfrak{p}^\ast$
and
$I_{\mathfrak{p}_{\infty,i}^{\ast}}$ the inertia subgroup of
$\mathrm{Gal}(M_{\mathfrak{p^{\ast}}}(N_{\infty})/ N_{\infty})$
for the prime ideal
$\mathfrak{p}_{\infty,i}^{\ast}$.
By the same reason as in the proof of Proposition \ref{rank uper},
we have
$I_{\mathfrak{p}_{\infty,i}^{\ast}} \cong \mathbb{Z}_p$ and $\mu(N_{\infty}/k)=0$.
Furthermore, the fixed field of
$M_{\mathfrak{p^{\ast}}}(N_{\infty})$ by $\displaystyle{\sum_{i=1}^{p^{n_0}} I_{\mathfrak{p}_{\infty,i}^{\ast}}}$
coincides with $L_{N_{\infty}}$.
Thus $\mathrm{Gal}(M_{\mathfrak{p^{\ast}}}(N_{\infty})/ N_{\infty})$ is a finitely generated $\mathbb{Z}_p$-module,
since we suppose that $\lambda(N_{\infty}/k)=0$.
Let $M'$ be the submodule of $\mathbb{Z}_p$-torsion in $\mathrm{Gal}(M_{\mathfrak{p^{\ast}}}(N_{\infty})/N_{\infty})$.
Hence we have an exact sequence
\[
0\rightarrow M' \rightarrow \mathrm{Gal}(M_{\mathfrak{p^{\ast}}}(N_{\infty})/N_{\infty}) \rightarrow
\mathrm{Gal}(M_{\mathfrak{p^{\ast}}}(N_{\infty})/N_{\infty})/M' \rightarrow 0 
\]
as $\mathbb{Z}_p[[\mathrm{Gal}(N_{\infty}/k)]]$-modules.
Since $M'$ is a finite $\mathbb{Z}_p$-module, we have
\begin{eqnarray}\label{char}
&~&
\mathrm{char}_{\mathbb{Z}_p[[\mathrm{Gal}(N_{\infty}/k)]]}(\mathrm{Gal}(M_{\mathfrak{p^{\ast}}}(N_{\infty})/N_{\infty})) \\
&=&\mathrm{char}_{\mathbb{Z}_p[[\mathrm{Gal}(N_{\infty}/k)]]}
\left(\displaystyle{\sum_{i=1}^{p^{n_0}} I_{\mathfrak{p}_{\infty,i}^{\ast}}}\right).\nonumber
\end{eqnarray}
For each $i$ with $1 \leq i \leq p^{n_0}$, the decomposition group $D_{\mathfrak{p}^{\ast}}$ acts on
$I_{\mathfrak{p}_{\infty,i}^{\ast}} $
because  all prime ideals of $ N_{n_0}$
lying above $\mathfrak{p}^{\ast}$ do not split in $N_{\infty}$.
Since we have $\mathrm{Gal}(M_{\mathfrak{p^\ast}}(N_{\infty})/\widetilde{k}) \cap I_{\mathfrak{p}_{\infty,i}^{\ast}} =1$,
there exists an injective homomorphism
\begin{eqnarray}\label{inj}
 I_{\mathfrak{p}_{\infty,i}^{\ast}}  \rightarrow \mathrm{Gal}(\widetilde{k}/N_{\infty})
\end{eqnarray}
as $\mathbb{Z}_p[[\mathrm{Gal}(N_{\infty}/N_{n_0})]]$-modules.
Then $D_{\mathfrak{p}^{\ast}}$ acts on $I_{\mathfrak{p}_{\infty,i}^{\ast}}$ trivially
because  $\widetilde{k}/N_{n_0}$ is an abelian extension.
Hence $D_{\mathfrak{p}^{\ast}}$ acts on
$\displaystyle{\sum_{i=1}^{p^{n_0}} I_{\mathfrak{p}_{\infty,i}^{\ast}}}$ trivially.
Let $F$ be the fixed field of $M_{\mathfrak{p^{\ast}}}(N_{\infty})$
by $M'$.
By (\ref{char}) and (\ref{inj}),
$F/N_{n_0}$ is an abelian extension.\par
Let $s$ be a non-negative integer such that $p^s= [L_{k} \cap \widetilde{k}:k]$.
By Proposition \ref{split}, $p$ splits completely in $L_{k} \cap \widetilde{k}$.
We note that all prime ideals of $L_k \cap \widetilde{k}$ lying above $\mathfrak{p}$
do not split in $N_{\infty}$.
Let
\[
\{ \mathfrak{p}_{n_0,i}~|~1 \leq  i \leq p^{s}\}
\]
be the set of prime ideals of $N_{n_0}$ lying above $\mathfrak{p}$
and
$I_{\mathfrak{p}_{n_0,i}}$ the inertia subgroup of $\mathrm{Gal}(F/ N_{n_0})$
for the prime ideal $\mathfrak{p}_{n_0,i}$.
Furthermore, by the same method as in the proof of Proposition \ref{rank uper},
we have $I_{\mathfrak{p}_{n_{0},i}} \cong \mathbb{Z}_p$ for each $i$.
Since the fixed field in $F$
by $\displaystyle{\sum_{i=1}^{p^{s}} I_{\mathfrak{p}_{n_0,i}}}$
is contained in $M_{\mathfrak{p}^{\ast}}(N_{n_0})$,
we obtain
\[
\mathrm{rank}_{\mathbb{Z}_p}(\mathrm{Gal}(M_{\mathfrak{p}^{\ast}}(N_{\infty})/N_{n_0})) =
\mathrm{rank}_{\mathbb{Z}_p}(\mathrm{Gal}(F/N_{n_0})) \leq p^s +1
\]
by Proposition \ref{rank n0}.
Using Proposition \ref{index}, we have $p^s < p^{n_0}$.
Thus we get the conclusion.
\end{proof}
\section{Proof of the inequality (B)} \label{B}
In this section, we will prove the inequality (B):
\begin{eqnarray*}
\mathrm{rank}_{\mathbb{Z}_p} (\mathrm{Gal} (M_{\mathfrak{p^{\ast}}}(N_{\infty})/ \widetilde{k})) \geq
\mathrm{min}\{ [L_k \cap \widetilde{k}:k], [\mathrm{Gal}(\widetilde{k}/k):\mathfrak{D_{p^\ast}}] \}
\end{eqnarray*}
under the assumption
of Theorem \ref{main thm} and assuming that
weak GGC does not hold for $p$ and $k$.~\\
\subsection{Module theory}~
In this subsection, we will prove some module theoretical properties needed later
provided that
weak GGC does not hold for $p$ and $k$.
From Lemma \ref{trivial},
we have $\lambda(k_{\infty}^c/k) \geq 2$ if weak GGC does not hold for $p$ and $k$.
Let $\mathrm{Ass}_{\Lambda}(X_{\widetilde{k}})$ be the set of associated prime ideals of $X_{\widetilde{k}}$.
In other words, we put
\[
\mathrm{Ass}_{\Lambda}(X_{\widetilde{k}})
= \{\mathfrak{p} : \mathrm{~prime~ideal~}|~\mathfrak{p}=\mathrm{Ann}_{\Lambda}(x) ~\mathrm{for~some~element~} x \mathrm{~of}~X_{\widetilde{k}} \},
\]
where we write $\mathrm{Ann}_{\Lambda}(x)=\{ a \in \Lambda ~|~ ax=0\}$.
\begin{lem}\label{principal}
Assume that weak {\rm{GGC}} does not hold for $p$ and $k$.
Let $Y$ be a $\Lambda$-submodule of $X_{\widetilde{k}}$.
If $Y$ is cyclic as a $\Lambda$-module, in other words,
$Y$ is isomorphic to $\Lambda/\mathrm{Ann}_{\Lambda}(Y)$, then $\mathrm{Ann}_{\Lambda}(Y)$ is a principal ideal.
In particular, an associated prime ideal of $X_{\widetilde{k}}$ is principal.
\end{lem}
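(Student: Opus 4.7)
The plan is to translate the statement into the primary decomposition of $I := \mathrm{Ann}_\Lambda(Y)$ in the three-dimensional regular local UFD $\Lambda = \mathbb{Z}_p[[S,T]]$, using the isomorphism $Y \cong \Lambda/I$ together with the failure of weak GGC. Since $X_{\widetilde{k}}$ is a finitely generated torsion $\Lambda$-module, so is $Y$; therefore $I \neq 0$, and every associated prime of $\Lambda/I$ has height at least one.

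The crucial step is to show that every $\mathfrak{p} \in \mathrm{Ass}_\Lambda(\Lambda/I)$ has height \emph{exactly} one. Suppose, for contradiction, that some such $\mathfrak{p}$ has height $\geq 2$. By definition of an associated prime, there is an injection $\Lambda/\mathfrak{p} \hookrightarrow \Lambda/I \cong Y \hookrightarrow X_{\widetilde{k}}$. Since $\mathfrak{p}$ has height $\geq 2$ in the UFD $\Lambda$, it contains two coprime elements, both of which annihilate $\Lambda/\mathfrak{p}$; hence $\Lambda/\mathfrak{p}$ is a nontrivial pseudo-null $\Lambda$-submodule of $X_{\widetilde{k}}$. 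But the failure of weak GGC means precisely that $X_{\widetilde{k}}$ admits no such submodule, a contradiction.

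Once every associated prime of $\Lambda/I$ has height one, the rest is standard commutative algebra over a UFD. Each such prime is principal, say $\mathfrak{p}_i = (f_i)$ with $f_i$ irreducible, and the localization $\Lambda_{\mathfrak{p}_i}$ is a discrete valuation ring. Because $\Lambda/I$ has no embedded primes, $I$ admits a reduced primary decomposition $I = \bigcap_{i=1}^{r} \mathfrak{q}_i$ with each $\mathfrak{q}_i$ being $\mathfrak{p}_i$-primary. Localizing at $\mathfrak{p}_i$ yields $\mathfrak{q}_i \Lambda_{\mathfrak{p}_i} = (f_i^{n_i})$ for some $n_i \geq 1$, and contracting (using that $f_i$ is coprime to every element outside $\mathfrak{p}_i$) gives $\mathfrak{q}_i = (f_i^{n_i})$. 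The $f_i$ being pairwise non-associate irreducibles in $\Lambda$, the intersection $\bigcap_{i=1}^{r} (f_i^{n_i})$ coincides with the principal ideal generated by $\prod_{i=1}^{r} f_i^{n_i}$, so $I$ is principal.

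For the ``in particular'' statement, any $\mathfrak{p} \in \mathrm{Ass}_\Lambda(X_{\widetilde{k}})$ is the annihilator of some element $x \in X_{\widetilde{k}}$; the cyclic submodule $\Lambda x$ then has annihilator $\mathfrak{p}$, and the first part immediately gives that $\mathfrak{p}$ is principal. The main obstacle will be the passage from ``no nontrivial pseudo-null submodule'' to ``all associated primes of $\Lambda/I$ have height one''; once this is in hand, the reduction to a product of prime powers via UFD-primary decomposition is routine.
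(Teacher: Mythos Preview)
Your proof is correct, but it takes a different route from the paper's. The paper argues directly via greatest common divisors: given any $g,h\in\mathrm{Ann}_\Lambda(Y)$, write $g=g'G$, $h=h'G$ with $G=\gcd(g,h)$ and $g',h'$ coprime; then $GY$ is annihilated by the coprime pair $g',h'$, hence is pseudo-null inside $X_{\widetilde{k}}$, hence is zero. Thus $G\in\mathrm{Ann}_\Lambda(Y)$ and $(g,h)\subset(G)\subset\mathrm{Ann}_\Lambda(Y)$; iterating over a finite generating set shows the annihilator is principal. You instead pass through primary decomposition: the absence of pseudo-null submodules forces every associated prime of $\Lambda/I$ to have height one, hence to be principal, and then the primary components in the UFD $\Lambda$ are powers of those irreducibles, whose intersection is the principal ideal generated by their product.

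Both arguments pivot on exactly the same input (no nontrivial pseudo-null submodule of $X_{\widetilde{k}}$), applied at the analogous spot. The paper's version is shorter and needs only the gcd and Noetherianity; your version carries more machinery but also yields, for free, the explicit shape of the primary components of $I$, which is pleasant extra information even if not needed here.
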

\begin{proof}
We suppose that $Y$ is isomorphic to $\Lambda/\mathrm{Ann}_{\Lambda}(Y)$.
Let $g,h$ be elements of $\mathrm{Ann}_{\Lambda}(Y)$.
We denote by $G$ the greatest common divisor for $g$ and $h$.
Then there exist $g',h' \in \Lambda$ such that $g=g'G, h=h'G$, and $g'$ is coprime to $h'$.
Then $GY$ is a pseudo-null $\Lambda$-submodule of $X_{\widetilde{k}}$.
Since $X_{\widetilde{k}}$ has no non-trivial pseudo-null $\Lambda$-submodule, we get $GY=0$.
Hence we have $G \in \mathrm{Ann}_{\Lambda}(Y)$.
This implies that $(g,h) \subset (G) \subset \mathrm{Ann}_{\Lambda}(Y)$ as an ideal of $\Lambda$.
Since $\mathrm{Ann}_{\Lambda}(Y)$ is a finitely generated $\Lambda$-module,
we can prove that
$\mathrm{Ann}_{\Lambda}(Y)$ is a principal ideal, inductively.
\end{proof}
If $X_{\widetilde{k}}$ is cyclic as a $\Lambda$-module, we can determine the isomorphism class
of $X_{\widetilde{k}}$.
\begin{prop}\label{isom str}
Assume that $X_{\widetilde{k}}$ is cyclic as a $\Lambda$-module.
Assume also that weak {\rm{GGC}} does not hold for $p$ and $k$. Then we have
\[
X_{\widetilde{k}} \cong \Lambda/f(S,T) \Lambda
\]
as $\Lambda$-modules, where $f(S,T)$ is the same annihilator of $X_{\widetilde{k}}$ defined in Lemma \ref{f(S,T)}.
\end{prop}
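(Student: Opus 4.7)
The plan is to combine the cyclic hypothesis with Lemma \ref{principal} to produce a principal annihilator, and then to identify that annihilator with $f(S,T)$ by a Weierstrass preparation argument carried out modulo $S$.

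Since $X_{\widetilde{k}}$ is cyclic, one has $X_{\widetilde{k}} \cong \Lambda/\mathrm{Ann}_{\Lambda}(X_{\widetilde{k}})$. Because weak {\rm{GGC}} fails for $p$ and $k$, the module $X_{\widetilde{k}}$ admits no non-trivial pseudo-null $\Lambda$-submodule, so Lemma \ref{principal} (applied with $Y = X_{\widetilde{k}}$) gives a single generator $h(S,T) \in \Lambda$ with $\mathrm{Ann}_{\Lambda}(X_{\widetilde{k}}) = (h)$. Since $f(S,T)$ annihilates $X_{\widetilde{k}}$ by Lemma \ref{f(S,T)}, we may write $f(S,T) = h(S,T)\, q(S,T)$ for some $q \in \Lambda$. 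It then suffices to show $q \in \Lambda^{\times}$.

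Next I would reduce modulo $S$. From the proof of Lemma \ref{f(S,T)} one has $X_{\widetilde{k}}/S X_{\widetilde{k}} \cong \mathbb{Z}_p^{\oplus \lambda^{\ast}}$, and under the cyclic assumption this same quotient is $\Lambda/(S,h) \cong \mathbb{Z}_p[[T]]/(h(0,T))$. By Weierstrass preparation in $\mathbb{Z}_p[[T]]$, the quotient $\mathbb{Z}_p[[T]]/(h(0,T))$ is $\mathbb{Z}_p$-free of rank $\lambda^{\ast}$ precisely when $h(0,T) = u(T) P(T)$ with $u(T) \in \mathbb{Z}_p[[T]]^{\times}$ and $P(T)$ distinguished of degree $\lambda^{\ast}$. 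On the other hand, Lemma \ref{f(S,T)} makes $f(0,T)$ a monic polynomial in $T$ of degree $\lambda^{\ast}$, and Proposition \ref{char cyc} identifies $(T f(0,T))$ with $\mathrm{char}_{\mathbb{Z}_p[[T]]}(X_{k_{\infty}^c})$, whose $\lambda$-invariant is $\lambda^{\ast}+1$; hence the distinguished factor of $f(0,T)$ also has degree $\lambda^{\ast}$ with vanishing $\mu$-invariant.

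Specializing $f = hq$ at $S = 0$ yields $f(0,T) = h(0,T)\, q(0,T)$ in $\mathbb{Z}_p[[T]]$. Matching the distinguished factors on both sides via uniqueness of Weierstrass preparation forces $q(0,T) \in \mathbb{Z}_p[[T]]^{\times}$, and in particular $q(0,0) \in \mathbb{Z}_p^{\times}$. Since $\Lambda$ is local with maximal ideal $(p,S,T)$, this gives $q \in \Lambda^{\times}$ and therefore $(f(S,T)) = (h)$, yielding $X_{\widetilde{k}} \cong \Lambda/f(S,T)\Lambda$ as $\Lambda$-modules. The main technical obstacle is the degree bookkeeping in the reduction modulo $S$: one must verify that both $h(0,T)$ and $f(0,T)$ have Weierstrass distinguished parts of the same degree $\lambda^{\ast}$, and this is precisely where the explicit monic shape from Lemma \ref{f(S,T)} and the characteristic ideal computation of Proposition \ref{char cyc} are indispensable.
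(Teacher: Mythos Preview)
Your argument is correct and follows essentially the same route as the paper: obtain a principal generator $h$ of $\mathrm{Ann}_\Lambda(X_{\widetilde{k}})$ via Lemma~\ref{principal}, write $f=hq$, and show $q$ is a unit by reducing modulo $S$ and comparing with the known structure $X_{\widetilde{k}}/SX_{\widetilde{k}}\cong\mathbb{Z}_p^{\oplus\lambda^*}$. The paper phrases the last step as comparing $(TG(0,T))$ with $(Tf(0,T))$ via Lemma~\ref{Ozaki} and Proposition~\ref{char cyc}, while you phrase it via Weierstrass preparation; these are equivalent, and in fact once you know $h(0,T)$ has $\lambda$-invariant $\lambda^*$ and $f(0,T)$ is monic of degree $\lambda^*$ (hence $\mu=0$, $\lambda\le\lambda^*$), the unit conclusion for $q(0,T)$ follows immediately without even needing the full force of Proposition~\ref{char cyc}.
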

\begin{proof}
Since $X_{\widetilde{k}}$ is cyclic,
we have $X_{\widetilde{k}} \cong \Lambda/ \mathrm{Ann}_{\Lambda}(X_{\widetilde{k}}).$
By Lemma \ref{principal}, $\mathrm{Ann}_{\Lambda}(X_{\widetilde{k}})$ is a principal ideal.
Let $G(S,T) \in \Lambda$ be a generator of $\mathrm{Ann}_{\Lambda}(X_{\widetilde{k}})$.
Using Lemma \ref{Ozaki}, we have $\mathrm{char}_{\mathbb{Z}_p[[T]]}(X_{k_{\infty}^c}) = (TG(0,T))$.
By Lemma \ref{f(S,T)}, there exists a power series $H(S,T) \in \Lambda$ such that
$f(S,T) =G(S,T)H(S,T).$
Using Proposition \ref{char cyc}, we have $\mathrm{char}_{\mathbb{Z}_p[[T]]}(X_{k_{\infty}^c}) = (Tf(0,T))$.
This implies that $H(S,T) \in \Lambda^{\times}$.
Therefore we have $\mathrm{Ann}_{\Lambda}(X_{\widetilde{k}})=(f(S,T))$.
Thus we get the conclusion.
\end{proof}
To consider the case where $X_{\widetilde{k}}$ is not $\Lambda$-cyclic, we prove the following
\begin{lem}\label{an}
Suppose that $\lambda(k_{\infty}^c/k) \geq 2$
and that $X_{\widetilde{k}}$ is not cyclic as a $\Lambda$-module.
Let
\begin{eqnarray*}\label{factor}
\displaystyle{f(S,T)} = \prod_{i=1}^{l} f_{i}(S,T)^{n_i}
\end{eqnarray*}
be a prime factorization, in other words,
$l$ and $n_i$'s are positive integers and
$f_{i}(S,T)$'s are irreducible elements of $\Lambda$.
Assume that
the characteristic ideal of $X_{k_{\infty}^c}$
has a generator which is square-free.
Then $f_{i}(S,T)$ is a zero-divisor of $X_{\widetilde{k}}$ for each $i$.
\end{lem}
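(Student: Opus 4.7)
The plan is to argue by contradiction: I suppose that some factor $f_{i_0}(S,T)$ is not a zero-divisor of $X_{\widetilde{k}}$ and derive a contradiction. Factor $f(S,T) = f_{i_0}(S,T)^{n_{i_0}} h(S,T)$ with $h(S,T) = \prod_{j \neq i_0} f_j(S,T)^{n_j}$. Since multiplication by $f_{i_0}$ on $X_{\widetilde{k}}$ is injective by hypothesis, so is multiplication by $f_{i_0}^{n_{i_0}}$; because $f$ annihilates $X_{\widetilde{k}}$, this forces $h(S,T)$ to annihilate $X_{\widetilde{k}}$, and hence $h(0,T) \in \mathbb{Z}_p[[T]]$ annihilates $X_{\widetilde{k}}/SX_{\widetilde{k}}$.

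The next step is to compute $\mathrm{Ann}_{\mathbb{Z}_p[[T]]}(X_{\widetilde{k}}/SX_{\widetilde{k}})$ exactly. By Lemma \ref{Ozaki} applied to $k_{\infty}^c$, $X_{\widetilde{k}}/SX_{\widetilde{k}}$ embeds into $X_{k_{\infty}^c}$, and Proposition \ref{char cyc} identifies the characteristic ideal of $X_{\widetilde{k}}/SX_{\widetilde{k}}$ with $(f(0,T))$. The square-free hypothesis on $\mathrm{char}_{\mathbb{Z}_p[[T]]}(X_{k_{\infty}^c}) = (Tf(0,T))$ makes $f(0,T)$ square-free, and the monic-in-$T$ form of $f$ from Lemma \ref{f(S,T)} guarantees $p \nmid f(0,T)$. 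Since $X_{k_{\infty}^c}$ has no non-trivial finite $\mathbb{Z}_p[[T]]$-submodule by \cite[Corollary 13.29]{Wa}, neither does $X_{\widetilde{k}}/SX_{\widetilde{k}}$; applying the structure theorem therefore yields an injection $X_{\widetilde{k}}/SX_{\widetilde{k}} \hookrightarrow \bigoplus_{j} \mathbb{Z}_p[[T]]/(\pi_j)$ with $\prod_{j} \pi_j = f(0,T)$ (up to units) and with a finite cokernel $C$ of $p$-power order. For any $a \in \mathrm{Ann}(X_{\widetilde{k}}/SX_{\widetilde{k}})$ and any $y$ in the direct sum, $|C| \cdot y$ lies in the image of $X_{\widetilde{k}}/SX_{\widetilde{k}}$, so $a|C|$ kills the direct sum, which gives $f(0,T) \mid a|C|$; because $\gcd(f(0,T), p) = 1$, this yields $f(0,T) \mid a$, whence $\mathrm{Ann}(X_{\widetilde{k}}/SX_{\widetilde{k}}) = (f(0,T))$.

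Combining these, $h(0,T) \in (f(0,T))$; since $f(0,T) = f_{i_0}(0,T)^{n_{i_0}} h(0,T)$, this forces $f_{i_0}(0,T)^{n_{i_0}}$ to be a unit in $\mathbb{Z}_p[[T]]$. But $f_{i_0}(S,T)$ is irreducible, hence a non-unit, in $\Lambda$, so it lies in the maximal ideal $(p,S,T)$; thus $f_{i_0}(0,T) \in (p,T) \subset \mathbb{Z}_p[[T]]$ is a non-unit. It is also nonzero, since otherwise $S \mid f_{i_0}$ would force $f_{i_0} = S$ up to a unit, contradicting that $f(0,T)$ is the nonzero monic polynomial of Lemma \ref{f(S,T)}. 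This contradiction completes the argument. The main technical hurdle is the exact computation of $\mathrm{Ann}(X_{\widetilde{k}}/SX_{\widetilde{k}})$: both the square-free hypothesis on the characteristic ideal and the monic form of $f$ are essential to control the $p$-power obstruction arising from the cokernel in the annihilator chase.
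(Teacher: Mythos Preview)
Your proof is correct and follows essentially the same route as the paper's: assume the complementary factor $h=\prod_{j\neq i_0}f_j^{n_j}$ annihilates $X_{\widetilde{k}}$, reduce modulo $S$, and compare with $f(0,T)$ to force $f_{i_0}(0,T)$ to be a unit in $\mathbb{Z}_p[[T]]$, contradicting irreducibility of $f_{i_0}$ in $\Lambda$.

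The only difference is in how the comparison modulo $S$ is made. The paper first uses the square-free hypothesis to get $n_i=1$, then passes through characteristic ideals: from the surjection $(\mathbb{Z}_p[[T]]/F_i(0,T))^{\oplus r}\twoheadrightarrow X_{\widetilde{k}}/SX_{\widetilde{k}}$ it obtains $(TF_i(0,T)^r)\subset(Tf(0,T))$, and the square-freeness of $Tf(0,T)$ lets one drop the exponent $r$ to conclude $\deg F_i(0,T)\geq\deg f(0,T)$. You instead compute $\mathrm{Ann}_{\mathbb{Z}_p[[T]]}(X_{\widetilde{k}}/SX_{\widetilde{k}})=(f(0,T))$ directly, via the injective structure map into $\bigoplus_j\mathbb{Z}_p[[T]]/(\pi_j)$ and the $|C|$-trick, using square-freeness to guarantee distinct $\pi_j$ and the monic form of $f(0,T)$ to kill the $p$-power obstruction from the finite cokernel. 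Your route avoids the passage through the exponent $r$ and never needs to first reduce to $n_i=1$, at the cost of a slightly longer annihilator computation; the paper's route is shorter but leans on the multiplicativity of characteristic ideals under surjections. Both invoke the square-free hypothesis at the analogous spot.
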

\begin{proof}
We note that
$\mathrm{char}_{\Lambda}(X_{\widetilde{k}})$ has a square-free generator
because
the characteristic ideal
of
$X_{k_{\infty}^c}$
also has a square-free generator.
Hence we have $n_i=1$ for each $i$.
In the case of $l =1$, in other words, $f(S,T)$ is an irreducible element,
$f(S,T)$ is a zero-divisor of $X_{\widetilde{k}}$ by Lemma \ref{f(S,T)}.
Thus we get the conclusion.
In the following, we suppose that $l \geq 2$.
We put $F_i(S,T)=\displaystyle{\prod_{j=1, j \neq i}^{l} f_{j}(S,T)}$.
We prove that
$F_{i}(S,T)  X_{\widetilde{k}} \neq 0$.
Suppose that
$F_{i}(S,T)  X_{\widetilde{k}} =0$.
Since $X_{\widetilde{k}}$ is a finitely generated $\Lambda$-module, there exists a positive integer $r$ and
a surjective homomorphism
\[
\left( \Lambda/ F_{i}(S,T) \Lambda \right)^{\oplus r}
\rightarrow 
X_{\widetilde{k}}.
\]
This homomorphism induces
\[
\left( \mathbb{Z}_p[[T]]/ F_{i}(0,T) \mathbb{Z}_p[[T]] \right)^{\oplus r}
\rightarrow 
X_{\widetilde{k}}/S X_{\widetilde{k}}.
\]
Since the characteristic ideal of $X_{k_{\infty}^c}$
has a square-free generator, we have
\[
(T F_{i}(0,T)) \subset
\mathrm{char}_{{\mathbb{Z}_p}[[\mathrm{Gal}(k_{\infty}^c/k)]]}(X_{k_{\infty}^c}) = (Tf(0,T))
\]
by Proposition \ref{char cyc}. 
Hence we have
$\displaystyle{ \mathrm{deg}(F_{i}(0,T)) \geq  \mathrm{deg}(f(0,T))}$.
This implies that $f_i(S,T)$ is a unit in $\Lambda$
because we have $f(0,T)=F_i(0,T)f_i(0,T)$.
This is a contradiction.
Thus we get the conclusion.
\end{proof}
\begin{prop}\label{cycsub}
Assume that weak {\rm{GGC}} does not hold for $p$ and $k$.
Assume also that
the characteristic ideal 
of $X_{k_{\infty}^c}$
has a generator which is square-free
or $X_{\widetilde{k}}$ is cyclic as a $\Lambda$-module.
Then there exists a $\Lambda$-submodule $Y$ of $X_{\widetilde{k}}$ such that $Y$ is isomorphic to $\Lambda/(f(S,T))$.
\end{prop}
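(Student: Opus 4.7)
The plan splits according to the two disjunctive hypotheses. If $X_{\widetilde{k}}$ is $\Lambda$-cyclic, Proposition~\ref{isom str} immediately supplies $X_{\widetilde{k}} \cong \Lambda/f(S,T)\Lambda$, and I simply take $Y = X_{\widetilde{k}}$. The real content is the case where $X_{\widetilde{k}}$ is not $\Lambda$-cyclic; then the characteristic ideal of $X_{k_{\infty}^c}$ has a square-free generator, and Lemma~\ref{an} gives the prime factorization $f(S,T) = f_1(S,T)\cdots f_l(S,T)$ in the UFD $\Lambda$ with the $f_i$ pairwise distinct irreducibles (each appearing with exponent one). Moreover, since every $f_i$ is a zero-divisor on $X_{\widetilde{k}}$, Lemma~\ref{principal} shows that $(f_i)$ is a (principal) associated prime of $X_{\widetilde{k}}$, and one checks that $\mathrm{char}_{\Lambda}(X_{\widetilde{k}}) = (f)$.

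The failure of weak GGC for $(p,k)$ means, by definition, that $X_{\widetilde{k}}$ is non-trivial and contains no non-trivial pseudo-null $\Lambda$-submodule. I would apply the structure theorem as recalled in Section~\ref{Preli}: the absence of pseudo-null submodules forces the standard pseudo-isomorphism to be an injection
\[
\iota\colon X_{\widetilde{k}} \hookrightarrow W := \bigoplus_j \Lambda/\mathfrak{q}_j^{n_j}
\]
with only pseudo-null cokernel, where the $\mathfrak{q}_j$ are height-one prime ideals of $\Lambda$. Since $\prod_j \mathfrak{q}_j^{n_j} = \mathrm{char}_{\Lambda}(X_{\widetilde{k}}) = (f_1 \cdots f_l)$ with distinct irreducibles $f_i$, unique factorization of principal ideals in the UFD $\Lambda$ forces, after reindexing, $W = \bigoplus_{i=1}^l \Lambda/(f_i)$.

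The final step, and the crux of the argument, is to produce an element of $X_{\widetilde{k}}$ whose annihilator is exactly $(f)$ via prime avoidance. Let $v := (1,\ldots,1) \in W$; by coprimality of the $f_i$, $\mathrm{Ann}_{\Lambda}(v) = \bigcap_i (f_i) = (f)$. Set $A := \mathrm{Ann}_{\Lambda}(W/\iota(X_{\widetilde{k}}))$; the pseudo-nullity of $W/\iota(X_{\widetilde{k}})$ gives that $A$ has height at least two, so $A \not\subset (f_i)$ for any $i$, and prime avoidance yields $g \in A \setminus \bigcup_{i=1}^l (f_i)$. Then $gv \in \iota(X_{\widetilde{k}})$, and since each $\Lambda/(f_i)$ is an integral domain with $g \notin (f_i)$, the annihilator of $gv$ is still $(f)$. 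Pulling $gv$ back along $\iota$ to some $y \in X_{\widetilde{k}}$, the cyclic submodule $Y := \Lambda y$ is isomorphic to $\Lambda/f(S,T)\Lambda$. The hardest part is this last extraction, where prime avoidance bridges the gap between the pseudo-isomorphism class of $X_{\widetilde{k}}$ (which obviously contains $\Lambda/(f)$ up to pseudo-null noise) and an honest submodule inclusion inside $X_{\widetilde{k}}$ itself.
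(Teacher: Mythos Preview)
Your argument is correct, but the paper takes a different and somewhat more direct route in the non-cyclic case. Both proofs identify the elementary module as $W=\bigoplus_{i=1}^{l}\Lambda/(f_i)$ with the $f_i$ pairwise distinct irreducibles, using Lemma~\ref{an} and the square-free hypothesis. The divergence is in the \emph{direction} of the pseudo-isomorphism taken from the structure theorem. You use the map $\iota\colon X_{\widetilde{k}}\hookrightarrow W$ (injective because $X_{\widetilde{k}}$ has no non-trivial pseudo-null submodule) and then pull the diagonal element $v=(1,\ldots,1)$ back into $\iota(X_{\widetilde{k}})$ by multiplying by a suitable $g$ found via prime avoidance on $\mathrm{Ann}_{\Lambda}(W/\iota(X_{\widetilde{k}}))$. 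The paper instead invokes a pseudo-isomorphism in the \emph{reverse} direction, $\varphi\colon W\to X_{\widetilde{k}}$, which is injective because $W$ itself has no non-trivial pseudo-null submodule; then the composite $\Lambda/(f)\hookrightarrow W\xrightarrow{\varphi} X_{\widetilde{k}}$ immediately furnishes $Y$, with no extraction step needed.

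What each approach buys: the paper's route is shorter at the end (no $g$, no pull-back), but it spends more effort beforehand establishing that $\mathrm{Ass}_{\Lambda}(X_{\widetilde{k}})=\{(f_i)\mid 1\le i\le l\}$ exactly, and it relies on the existence of a pseudo-isomorphism $W\to X_{\widetilde{k}}$, which is standard but not literally the form stated in Section~\ref{Preli}. Your route sticks to the structure theorem exactly as the paper states it ($M\to\bigoplus R/\mathfrak{p}_i^{m_i}$) and compensates with the prime-avoidance extraction; this is a clean, self-contained manoeuvre. One small remark: your ``one checks that $\mathrm{char}_{\Lambda}(X_{\widetilde{k}})=(f)$'' is correct but compressed---it needs the observation (from the proof of Lemma~\ref{an}) that the square-free hypothesis on $\mathrm{char}_{\mathbb{Z}_p[[T]]}(X_{k_\infty^c})$ forces $\mathrm{char}_{\Lambda}(X_{\widetilde{k}})$ to be square-free as well, together with the fact that every $(f_i)$ is an associated prime.
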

\begin{proof}
If $X_{\widetilde{k}}$ is cyclic as a $\Lambda$-module,
then $X_{\widetilde{k}}$ is isomorphic to $\Lambda/(f(S,T))$ by Proposition \ref{isom str}.
Hence we can take $Y=X_{\widetilde{k}}$.
Thus we get the conclusion.
We suppose that $X_{\widetilde{k}}$ is not cyclic as a $\Lambda$-module.
Since $\Lambda$ is a noetherian ring, $\displaystyle{\bigcup_{\mathfrak{p}\in \mathrm{Ass}_{\Lambda}(X_{\widetilde{k}})} \mathfrak{p}}$
is the set of zero-divisors of $X_{\widetilde{k}}$ by \cite[Theorem $6.1$]{Mat}.
Let $\mathfrak{p}$ be an associated prime ideal of $X_{\widetilde{k}}$.
Let $\displaystyle{f(S,T)} = \prod_{i=1}^{l} f_{i}(S,T)$
be the same prime factorization as in Lemma \ref{an}.
Since $f_i(S,T)$ is an irreducible element of $\Lambda$, $(f_i(S,T))$ is a prime ideal.
We claim that $(f_{i}(S,T))$ is an associated prime ideal of $X_{\widetilde{k}}$ for each $i$.
Indeed, $f_{i}(S,T)$ is a zero-divisor of $X_{\widetilde{k}}$
by Lemma \ref{an}.
Hence we have $\displaystyle{ (f_{i}(S,T)) \subset \bigcup_{\mathfrak{p}\in \mathrm{Ass}_{\Lambda}(X_{\widetilde{k}})} \mathfrak{p}}$.
By the prime avoidance theorem,
$(f_{i}(S,T)) \subset \mathfrak{p}$ for some associated prime ideal $\mathfrak{p}$.
By Lemma \ref{principal}, $\mathfrak{p}$ is a principal ideal.
Therefore we obtain $\mathfrak{p}=(f_{i}(S,T))$.
Furthermore, we can prove that
\[
\mathrm{Ass}_{\Lambda}(X_{\widetilde{k}})=
\{
(f_{i}(S,T)) ~|~1\leq i\leq l
\}.
\]
Indeed, let $\mathfrak{p}$ be
an associated prime ideal of $X_{\widetilde{k}}$.
There exists an element $x$ of $X_{\widetilde{k}}$ such that
$\mathfrak{p} = \mathrm{Ann}_{\Lambda}(x)$.
By Lemma \ref{principal}, $\mathfrak{p}$ is a principal ideal of $\Lambda$.
We put $\mathfrak{p}=(g)$ for some irreducible element $g$ in $\Lambda$.
If $g$ is coprime to $f(S,T)$, then
$\Lambda x$ is a non-trivial pseudo-null $\Lambda$-submodule of $X_{\widetilde{k}}$.
This is a contradiction.
Therefore there exists an integer $i$ such that
$(g)=(f_i(S,T))$.\par
By the structure theorem,
there exists a pseudo-isomorphism
\begin{eqnarray}\label{pshom}
\varphi: \bigoplus_{i=1}^{l'} \Lambda/\mathfrak{q}_i \rightarrow X_{\widetilde{k}},
\end{eqnarray}
where $l'$ is a positive integer and
$\mathfrak{q}_i$'s are prime ideals of height one.
We note that
$\mathrm{char}_{\Lambda}(X_{\widetilde{k}})$ has a square-free generator
by the same reason as in the proof of Lemma \ref{an}.
Hence we obtain $\mathfrak{q}_i \neq \mathfrak{q}_j$ for $i \neq j$.
Since $\displaystyle{\bigoplus_{i=1}^{l'} \Lambda/\mathfrak{q}_i}$ has no non-trivial pseudo-null
$\Lambda$-submodule, this homomorphism is injective. 
In the following, we will prove that $\mathfrak{q}_i$ is an associated prime ideal for each $i$ and that
$\mathrm{Ass}_{\Lambda}(X_{\widetilde{k}})$ coincides with
the set $\{\mathfrak{q}_i ~|~1\leq i \leq l' \}$.
Indeed, for each $j$ with $1\leq j\leq l'$,
we have a homomorphism
\[
\pi_j:\Lambda/\mathfrak{q}_j \rightarrow \bigoplus_{i=1}^{l'} \Lambda/\mathfrak{q}_i \xrightarrow{\varphi } X_{\widetilde{k}},
\]
where $\displaystyle{\Lambda/\mathfrak{q}_j \rightarrow \bigoplus_{i=1}^{l'} \Lambda/\mathfrak{q}_i}$ is the
natural injective homomorphism
such that, for an element $a$ of $\Lambda/\mathfrak{q}_j$, we define $a \mapsto (a_i)_i$ satisfying that $a_i =0$ for $i \neq j$ and $a_j=a$.
Hence
$\pi_j$ is an injective homomorphism.
Thus $\mathfrak{q}_j$ is an associated prime ideal.
This implies that
$\{\mathfrak{q}_i| 1\leq i \leq l' \}$ is contained in $\mathrm{Ass}_{\Lambda}(X_{\widetilde{k}})$.
Next we prove that $\mathrm{Ass}_{\Lambda}(X_{\widetilde{k}})$ is contained in $\{\mathfrak{q}_i| 1\leq i \leq l' \}$.
We assume that $\mathrm{Ass}_{\Lambda}(X_{\widetilde{k}}) \not\subset \{\mathfrak{q}_i| 1\leq i \leq l' \}$.
Then there exists an associated prime ideal $\mathfrak{p}$
such that
$\mathfrak{p} \not\in \{\mathfrak{q}_i| 1\leq i \leq l' \}$.
We note that $\mathfrak{p}$ is a prime ideal of height one by Lemma \ref{principal}.
Using localization with respect to
$\mathfrak{p}$, we obtain
\[
(X_{\widetilde{k}})_{\mathfrak{p}} \cong \left(  \bigoplus_{i=1}^{l'} \Lambda/\mathfrak{q}_i \right)_{\mathfrak{p}} = 0
\]
from (\ref{pshom}).
Hence $\mathrm{Ass}_{\Lambda}(X_{\widetilde{k}})$
is not contained in
the support of $X_{\widetilde{k}}$.
This is a contradiction. Thus we get
\[
\mathrm{Ass}_{\Lambda}(X_{\widetilde{k}}) =  \{\mathfrak{q}_i~|~ 1\leq i \leq l' \} = \{ (f_i(S,T))~|~ 1\leq i \leq l \}.
\]
This implies that $l = l'$.
Therefore we have a natural injective homomorphism
\[
\Lambda/(f(S,T)) \rightarrow \bigoplus_{i=1}^{l} \Lambda/(f_{i}(S,T)) = \bigoplus_{i=1}^{l} \Lambda/\mathfrak{q}_i
\rightarrow X_{\widetilde{k}}
\]
since we have a prime factorization $\displaystyle{f(S,T)} = \prod_{i=1}^{l} f_{i}(S,T)$.
Thus we obtain the conclusion.
\end{proof}
By the proposition above, we obtain the following
\begin{cor}\label{psisom}
Let $M$ be a finitely generated torsion $\Lambda$-module.
Assume that $M/SM$ is a free $\mathbb{Z}_p$-module of finite rank
and that
$\mathrm{char}_{\mathbb{Z}_p[[T]]}(M/SM)$
has a generator which is square-free.
Assume also that
$M$ has no non-trivial pseudo-null $\Lambda$-submodule.
Then we have a pseudo-isomorphism
\[
M \rightarrow \Lambda / \mathrm{char}_{\Lambda}(M).
\]
\end{cor}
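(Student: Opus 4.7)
The plan is to extract the argument of Proposition \ref{cycsub} and generalize it to the abstract $\Lambda$-module $M$. Set $\lambda = \mathrm{rank}_{\mathbb{Z}_p}(M/SM)$. I would first mimic Lemma \ref{f(S,T)}: since $M/SM \cong \mathbb{Z}_p^{\oplus \lambda}$, Nakayama's lemma over $\mathbb{Z}_p[[S]]$ produces $x_1, \dots, x_\lambda \in M$ generating $M$ as a $\mathbb{Z}_p[[S]]$-module, and the $T$-action is encoded by a matrix $A \in M_\lambda(\mathbb{Z}_p[[S]])$; then $f(S,T) := \det(T I_\lambda - A)$ is a monic polynomial in $T$ of degree $\lambda$ that annihilates $M$.

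Second, I would identify the characteristic ideals. The natural surjection $(\Lambda/(f(S,T)))^{\oplus\lambda} \twoheadrightarrow M$ reduces modulo $S$ to $(\mathbb{Z}_p[[T]]/(f(0,T)))^{\oplus\lambda} \twoheadrightarrow M/SM$. Since $M/SM$ is $\mathbb{Z}_p$-free of rank $\lambda$, its characteristic ideal $\mathrm{char}_{\mathbb{Z}_p[[T]]}(M/SM)$ is generated by a distinguished polynomial of degree $\lambda$; combined with $f(0,T)$ being monic of degree $\lambda$ and lying in this characteristic ideal, this pins down $\mathrm{char}_{\mathbb{Z}_p[[T]]}(M/SM) = (f(0,T))$. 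The square-free hypothesis on $\mathrm{char}_{\mathbb{Z}_p[[T]]}(M/SM)$ then forces $f(0,T)$ to be square-free in $\mathbb{Z}_p[[T]]$, and this lifts to $\Lambda$: if $p(S,T)^2 \mid f(S,T)$ then $p(0,T)^2 \mid f(0,T)$, forcing $p(0,T)$ to be a unit in $\mathbb{Z}_p[[T]]$, and since $\Lambda$ is local with maximal ideal $(S,T,p)$, $p(S,T)$ itself must be a unit.

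Third, factoring $f(S,T) = \prod_{i=1}^l f_i(S,T)$ into pairwise non-associate irreducibles, I would run the arguments of Lemma \ref{an} and Proposition \ref{cycsub}. A degree count modulo $S$ shows each $f_i$ is a zero-divisor on $M$; combining prime avoidance with Lemma \ref{principal} --- which uses exactly the hypothesis that $M$ has no non-trivial pseudo-null submodule to force associated primes of $M$ to be principal --- each $(f_i)$ is an associated prime of $M$. Conversely, if $(g) \in \mathrm{Ass}_\Lambda(M)$ with $g$ coprime to $f$, then the cyclic submodule $\Lambda x$ (with $\mathrm{Ann}_\Lambda(x) = (g)$) would be annihilated by the coprime pair $\{g, f\}$ and thus be a non-trivial pseudo-null submodule of $M$, a contradiction. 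Hence $\mathrm{Ass}_\Lambda(M) = \{(f_i)\}_{i=1}^l$ and $\mathrm{char}_\Lambda(M) = \prod (f_i) = (f(S,T))$.

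Finally, both $M$ and $\Lambda/(f)$ admit injective pseudo-isomorphisms into $\bigoplus_{i=1}^l \Lambda/(f_i)$: the structure theorem provides $M \hookrightarrow \bigoplus \Lambda/(f_i)$ (injective because $M$ has no non-trivial pseudo-null submodule) with pseudo-null cokernel, and the Chinese-remainder map $\Lambda/(f) \hookrightarrow \bigoplus \Lambda/(f_i)$ is injective (its kernel is $\bigcap (f_i) = (f)$ in the UFD $\Lambda$) with pseudo-null cokernel (supported on $V(f)$ but vanishing after localization at each minimal prime $(f_i)$). This establishes the desired pseudo-isomorphism between $M$ and $\Lambda/\mathrm{char}_\Lambda(M) = \Lambda/(f)$. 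The main technical obstacle is the second step, pinning down $\mathrm{char}_{\mathbb{Z}_p[[T]]}(M/SM) = (f(0,T))$ exactly rather than only up to a divisor; this requires careful matching of $\mathbb{Z}_p$-ranks and polynomial degrees via Weierstrass preparation together with the square-freeness hypothesis.
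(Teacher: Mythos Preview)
Your proposal is correct and follows essentially the same route as the paper: the corollary is stated without its own proof, as a direct consequence of the argument of Proposition~\ref{cycsub}, and you have faithfully abstracted that argument (together with the supporting Lemmas~\ref{f(S,T)}, \ref{principal}, \ref{an} and Proposition~\ref{char cyc}) to a general $M$. The only cosmetic difference is that the paper produces an explicit injective pseudo-isomorphism $\Lambda/(f)\hookrightarrow\bigoplus_i\Lambda/(f_i)\hookrightarrow M$, whereas you conclude by embedding both $M$ and $\Lambda/(f)$ into $\bigoplus_i\Lambda/(f_i)$ and invoking symmetry of the pseudo-isomorphism relation; either variant yields the stated conclusion.
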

Let $k_{\infty}/k$ be a $\mathbb{Z}_p$-extension. Then there exists a pair $(\alpha, \beta) \in \mathbb{Z}_p^{ \oplus 2} - p\mathbb{Z}_p^{\oplus 2}$
such that $k_{\infty}=\widetilde{k}^{\overline{\langle \sigma^{\alpha} \tau^{\beta} \rangle}}$,
where $\sigma$ and $\tau$ are the fixed topological generators defined in Section \ref{Preli}.
It is easy to check the following
\begin{lem}\label{Zp}
Let $k_{\infty}/k$ be a $\mathbb{Z}_p$-extension with $k_{\infty} \cap k_{\infty}^a = k_{m}^a$,
where $ k_{m}^a$ is the $m$-th layer of $ k_{\infty}^a$
and $m$ is a non-negative integer.
Then there exists a unit $u \in \mathbb{Z}_p^{\times}$ such that
$k_{\infty}=\widetilde{k}^{\overline{\langle \sigma^{up^{m}} \tau \rangle}}$.
\end{lem}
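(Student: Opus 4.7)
The plan is to translate the hypothesis into linear algebra on the rank-two free $\mathbb{Z}_p$-module $\mathrm{Gal}(\widetilde{k}/k) \cong \mathbb{Z}_p\sigma \oplus \mathbb{Z}_p\tau$. Under this identification, the subgroup $\mathrm{Gal}(\widetilde{k}/k_{\infty}^{a})$ is the summand $\mathbb{Z}_p \cdot (0,1)$ topologically generated by $\tau$. Since $k_{\infty}/k$ is a $\mathbb{Z}_p$-extension, $\mathrm{Gal}(\widetilde{k}/k_{\infty})$ must be a rank-one $\mathbb{Z}_p$-direct summand of $\mathrm{Gal}(\widetilde{k}/k)$, hence topologically cyclic with some generator $\sigma^{\alpha}\tau^{\beta}$ where $(\alpha,\beta) \in \mathbb{Z}_p^{\oplus 2} \setminus p\mathbb{Z}_p^{\oplus 2}$. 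I would take this parametrization as the starting point.

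Next, I would exploit the intersection hypothesis via Galois theory. The group $\mathrm{Gal}(\widetilde{k}/(k_{\infty}\cap k_{\infty}^{a}))$ is the closed subgroup of $\mathrm{Gal}(\widetilde{k}/k)$ generated by $\mathrm{Gal}(\widetilde{k}/k_{\infty})$ and $\mathrm{Gal}(\widetilde{k}/k_{\infty}^{a})$, which under our identification equals
\[
\mathbb{Z}_p \cdot (\alpha,\beta) + \mathbb{Z}_p \cdot (0,1) = \alpha\mathbb{Z}_p \oplus \mathbb{Z}_p,
\]
since varying the second generator freely saturates the second coordinate. On the other hand, $\mathrm{Gal}(\widetilde{k}/k_{m}^{a}) = p^{m}\mathbb{Z}_p \oplus \mathbb{Z}_p$ follows directly from the definition of the $m$-th layer of $k_{\infty}^{a}/k$. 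Matching the two descriptions of the intersection forces $\alpha\mathbb{Z}_p = p^{m}\mathbb{Z}_p$, so $\alpha = up^{m}$ for some $u \in \mathbb{Z}_p^{\times}$.

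Finally, I would normalize the topological generator so that the exponent of $\tau$ is $1$. With $\alpha = up^{m}$, the condition $(\alpha,\beta) \notin p\mathbb{Z}_p^{\oplus 2}$ forces $\beta \in \mathbb{Z}_p^{\times}$ whenever $m \geq 1$; the residual $m=0$ situation reduces to a direct check ruling out $k_{\infty}=k_{\infty}^{c}$. Because raising a topological generator to a $p$-adic unit power preserves the closed subgroup it generates, one has
\[
\overline{\langle \sigma^{\alpha}\tau^{\beta} \rangle} = \overline{\langle (\sigma^{\alpha}\tau^{\beta})^{\beta^{-1}} \rangle} = \overline{\langle \sigma^{u\beta^{-1}p^{m}}\tau \rangle}.
\]
Setting $u' := u\beta^{-1} \in \mathbb{Z}_p^{\times}$ then yields the desired presentation $k_{\infty} = \widetilde{k}^{\overline{\langle \sigma^{u'p^{m}}\tau \rangle}}$. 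I do not anticipate any substantial obstacle; the only real bookkeeping is tracking which exponents are $p$-adic units throughout the rescaling, together with the harmless edge case at $m=0$.
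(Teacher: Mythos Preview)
Your argument is correct and is exactly the expected linear-algebra computation; the paper itself offers no proof beyond the remark that the lemma is ``easy to check.''

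One small caveat on the $m=0$ edge case: your phrase ``ruling out $k_{\infty}=k_{\infty}^{c}$'' undersells the problem. When $m=0$ the conclusion fails not only for $k_{\infty}^{c}$ but for every $\mathbb{Z}_p$-extension with $\beta\in p\mathbb{Z}_p$ (e.g.\ the fixed field of $\overline{\langle \sigma\tau^{p}\rangle}$), so the lemma as literally stated is false in that range. This is an imprecision in the paper's statement rather than a defect in your proof. In practice the lemma is only applied to $k_{\infty}=N_{\infty}$, and there the total ramification of $\mathfrak{p}^{\ast}$ in $k_{\infty}^{c}/k$ forces the image of $\mathrm{Gal}(\widetilde{k}/N_{\infty})$ in $\mathrm{Gal}(k_{\infty}^{c}/k)\cong\mathbb{Z}_p$ to be surjective, i.e.\ $\beta\in\mathbb{Z}_p^{\times}$ automatically; so the issue never arises where it matters.
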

We put $\Gamma=\mathrm{Gal}(\widetilde{k}/N_{\infty})$.
Recall that $p^s = [L_{k}\cap \widetilde{k}:k]$.
Hence we have $N_{\infty} \cap k_{\infty}^a = L_k \cap \widetilde{k} = k_s^a$.
Then there exists a unit $u$ such that
\[
\mathrm{Gal}(\widetilde{k}/N_{\infty}) = \overline{ \langle \sigma^{up^{s}} \tau \rangle}
\]
by the lemma above.
We put $T_{s}=(1+S)^{up^s}(1+T)-1.$\par
The following proposition plays an important role
to obtain a lower bound of the $\mathbb{Z}_p$-rank of
$\mathrm{Gal} (M_{\mathfrak{p^{\ast}}}(N_{\infty})/ \widetilde{k}) $.
\begin{prop}\label{rank}
Under the same assumption as in Proposition \ref{cycsub},
we have $\mathrm{rank}_{\mathbb{Z}_p}((X_{\widetilde{k}})_{\Gamma}) \geq \mathrm{rank}_{\mathbb{Z}_p}(Y_{\Gamma})$,
where $Y$ is the same cyclic $\Lambda$-submodule of $X_{\widetilde{k}}$ as in Proposition \ref{cycsub}.
\end{prop}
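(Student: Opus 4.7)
The plan is to apply the Tor long exact sequence of $0 \to Y \to X_{\widetilde{k}} \to X_{\widetilde{k}}/Y \to 0$ tensored with $\Lambda/T_s\Lambda$, using the two-term free resolution $0 \to \Lambda \xrightarrow{T_s} \Lambda \to \Lambda/T_s \to 0$. If $X_{\widetilde{k}}$ is $\Lambda$-cyclic then $Y = X_{\widetilde{k}}$ by Proposition \ref{isom str} and the inequality is trivial, so I focus on the case where $X_{\widetilde{k}}$ is not cyclic and the characteristic ideal of $X_{k_{\infty}^c}$ has a square-free generator. Writing $M := X_{\widetilde{k}}/Y$, the Tor sequence yields
\begin{equation*}
0 \to Y^{\Gamma} \to X_{\widetilde{k}}^{\Gamma} \to M^{\Gamma} \to Y_{\Gamma} \to (X_{\widetilde{k}})_{\Gamma} \to M_{\Gamma} \to 0.
\end{equation*}

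First I would show that $T_s$ is a nonzerodivisor on $X_{\widetilde{k}}$, which forces $Y^{\Gamma} = X_{\widetilde{k}}^{\Gamma} = 0$. By the proof of Proposition \ref{cycsub}, the associated primes of $X_{\widetilde{k}}$ are exactly the principal primes $(f_i(S,T))$ arising in the prime factorization $f(S,T) = \prod_{i=1}^{l} f_i(S,T)$. Since $T_s$ is irreducible in $\Lambda$ (because $\Lambda/T_s\Lambda \cong \mathbb{Z}_p[[S]]$ is a domain), it is a zero-divisor on $X_{\widetilde{k}}$ only if $(T_s) = (f_i)$ for some $i$, equivalently only if $T_s \mid f(S,T)$. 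But evaluating at $(S,T) = (0,0)$ gives $f(0,0) = g_0(0) \neq 0$ (as invoked in the proof of Lemma \ref{Momo} via \cite[Proposition 6]{J-S}), so $T_s \nmid f$. The Tor sequence therefore reduces to the four-term exact sequence $0 \to M^{\Gamma} \to Y_{\Gamma} \to (X_{\widetilde{k}})_{\Gamma} \to M_{\Gamma} \to 0$, and since $Y$ and $X_{\widetilde{k}}$ share the characteristic ideal $(f(S,T))$, the quotient $M$ is pseudo-null.

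The key step is to establish $\mathrm{rank}_{\mathbb{Z}_p}(M^{\Gamma}) = \mathrm{rank}_{\mathbb{Z}_p}(M_{\Gamma})$. The annihilator of any pseudo-null $\Lambda$-module has height at least two, so upon inverting $p$ the support of $M[1/p]$ in the two-dimensional ring $\Lambda[1/p]$ is concentrated at maximal ideals, whence $M \otimes_{\mathbb{Z}_p} \mathbb{Q}_p$ is finite-dimensional over $\mathbb{Q}_p$. Tensoring the tautological sequence $0 \to M^{\Gamma} \to M \xrightarrow{T_s} M \to M_{\Gamma} \to 0$ with $\mathbb{Q}_p$ and applying rank-nullity to the resulting finite-dimensional endomorphism of $M \otimes \mathbb{Q}_p$ yields the desired equality of $\mathbb{Z}_p$-ranks. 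Taking the alternating sum of $\mathbb{Z}_p$-ranks across the four-term sequence then gives
\begin{equation*}
\mathrm{rank}_{\mathbb{Z}_p}((X_{\widetilde{k}})_{\Gamma}) = \mathrm{rank}_{\mathbb{Z}_p}(Y_{\Gamma}) + \mathrm{rank}_{\mathbb{Z}_p}(M_{\Gamma}) - \mathrm{rank}_{\mathbb{Z}_p}(M^{\Gamma}) = \mathrm{rank}_{\mathbb{Z}_p}(Y_{\Gamma}),
\end{equation*}
which is the claimed inequality (indeed, as an equality). The main obstacle I foresee is the rank comparison in the pseudo-null quotient $M$; the support-theoretic finiteness argument over $\Lambda[1/p]$ handles it cleanly, but if one preferred to avoid this localization, a substitute would be to work with the embedding $X_{\widetilde{k}} \hookrightarrow \bigoplus_{i} \Lambda/(f_i)$ from the proof of Proposition \ref{cycsub} and track $\Gamma$-coinvariants across the filtration $Y \subset X_{\widetilde{k}} \subset \bigoplus_{i} \Lambda/(f_i)$, exploiting the fact that all three modules share the characteristic ideal $(f)$.
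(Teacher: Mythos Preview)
Your proof is correct, and it takes a different route from the paper's. Both arguments begin identically: set up the short exact sequence $0 \to Y \to X_{\widetilde{k}} \to M \to 0$, apply the snake lemma for multiplication by $T_s$, and kill $(X_{\widetilde{k}})^{\Gamma}$ by showing $T_s$ and $f(S,T)$ are coprime. The divergence is in handling the cokernel $M$. The paper does \emph{not} observe that $M$ is pseudo-null; instead it shows only that $M^{\Gamma}$ is a torsion $\mathbb{Z}_p[[S]]$-module and then invokes Perrin-Riou's characteristic-ideal formula \cite[I,1.3, Lemma~4]{PR},
\[
\mathrm{char}_{\mathbb{Z}_p[[S]]}(M_{\Gamma}) = \pi\bigl(\mathrm{char}_{\Lambda}(M)\bigr)\cdot \mathrm{char}_{\mathbb{Z}_p[[S]]}(M^{\Gamma}),
\]
to conclude $\mathrm{rank}_{\mathbb{Z}_p}(M_{\Gamma}) \ge \mathrm{rank}_{\mathbb{Z}_p}(M^{\Gamma})$, yielding only the inequality. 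Your argument, by contrast, notices (via multiplicativity of characteristic ideals and the equality $\mathrm{char}_{\Lambda}(Y)=\mathrm{char}_{\Lambda}(X_{\widetilde{k}})=(f)$ extracted from the proof of Proposition~\ref{cycsub}) that $M$ itself is pseudo-null, so $M\otimes_{\mathbb{Z}_p}\mathbb{Q}_p$ is finite-dimensional over $\mathbb{Q}_p$ and rank--nullity for the endomorphism $T_s$ gives $\mathrm{rank}_{\mathbb{Z}_p}(M^{\Gamma}) = \mathrm{rank}_{\mathbb{Z}_p}(M_{\Gamma})$ exactly. This is more elementary (no appeal to \cite{PR}) and in fact proves the stronger statement that the two ranks in the proposition are \emph{equal}; the paper's route would still go through even without $M$ being pseudo-null, but in the present situation that extra generality is not needed.
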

\begin{proof}
If $X_{\widetilde{k}}$ is cyclic as a $\Lambda$-module, then we have $Y=X_{\widetilde{k}}$.
Thus we get the conclusion.
We suppose that $X_{\widetilde{k}}$ is not cyclic as a $\Lambda$-module.
By Proposition \ref{cycsub}, we have an exact sequence
\[
0 \rightarrow Y \rightarrow X_{\widetilde{k}} \rightarrow \mathrm{Coker} \rightarrow 0,
\]
where we put $\mathrm{Coker} = X_{\widetilde{k}}/Y$.
Using the snake lemma, we get
\begin{eqnarray*}
0 \rightarrow   Y^{\Gamma} \rightarrow (X_{\widetilde{k}})^{\Gamma}
\rightarrow   \mathrm{Coker}^{\Gamma} 
  \rightarrow   Y_{\Gamma} \rightarrow (X_{\widetilde{k}})_{\Gamma} \rightarrow 
\mathrm{Coker}_{\Gamma} \rightarrow 0.
\end{eqnarray*}
Since $X_{\widetilde{k}}$ has no non-trivial pseudo-null $\Lambda$-submodule,
we have $(X_{\widetilde{k}})^{\Gamma}=0$.
Indeed, we have $T_s (X_{\widetilde{k}})^{\Gamma}=f(S,T)(X_{\widetilde{k}})^{\Gamma}=0$.
Since the characteristic ideal of $X_{k_{\infty}^c}$
is generated by a power series which is square-free,
$T_s$ is coprime to $f(S,T)$.
This implies that $(X_{\widetilde{k}})^{\Gamma}$ is pseudo-null.
Hence we have $(X_{\widetilde{k}})^{\Gamma}=0$.
Then we obtain an exact sequence
\begin{eqnarray}\label{museq}
0 \rightarrow   \mathrm{Coker}^{\Gamma} 
  \rightarrow   Y_{\Gamma} \rightarrow (X_{\widetilde{k}})_{\Gamma} \rightarrow 
\mathrm{Coker}_{\Gamma} \rightarrow 0.
\end{eqnarray}
We identify the following rings below,
using isomorphisms
\begin{eqnarray}\label{is}
\Lambda/T_s \Lambda \cong \mathbb{Z}_p[[S]]
\cong \mathbb{Z}_p[[\mathrm{Gal}(N_{\infty}/k)]],
\end{eqnarray}
where we define
\begin{eqnarray*}
G(S,T) \mapsto G(S,(1+S)^{-up^s}-1) \mapsto G(\sigma \mathrm{Gal}(\widetilde{k}/N_{\infty})-1,\sigma^{-up^s} \mathrm{Gal}(\widetilde{k}/N_{\infty})-1).
\end{eqnarray*}
Here we note that $\mathbb{Z}_p[[S,T_s]]=\mathbb{Z}_p[[S,T]]$
because we have $T=(1+T_s)(1+S)^{-up^s}-1$.
We will prove that
$Y_{\Gamma}$ is a finitely generated $\mathbb{Z}_p$-module.
Indeed,
$\mathrm{Coker}_{\Gamma}$ is a finitely generated $\mathbb{Z}_p$-module
since $(X_{\widetilde{k}})_{\Gamma}$, which is isomorphic to $\mathrm{Gal}(M_{\mathfrak{p}^{\ast}}(N_{\infty})/\widetilde{k})$,
is a finitely generated $\mathbb{Z}_p$-module.
Furthermore, we can show that
$\mathrm{Coker}^{\Gamma}$ is a pseudo-null $\Lambda$-module
by the same method as above.
From (\ref{is}), we have
\begin{eqnarray*}
&~&f(S,T) \equiv f(S,(1+S)^{-up^s}-1)~~\mathrm{~mod~}T_s, \\
&~&f(S,(1+S)^{-up^s}-1) \mathrm{Coker}^{\Gamma}=0.
\end{eqnarray*}
We note that $f(S,(1+S)^{-up^s}-1) \neq 0$ since $f(0,0) \neq 0$.
Hence $\mathrm{Coker}^{\Gamma}$ is a torsion $\mathbb{Z}_p[[S]]$-module.
By \cite[I,1.3, Lemma 4]{PR}, we have
\[
\mathrm{char}_{\mathbb{Z}_p[[S]]}(\mathrm{Coker}_{\Gamma}) =
\pi(\mathrm{char}_{\Lambda}(\mathrm{Coker})) \mathrm{char}_{\mathbb{Z}_p[[S]]}(\mathrm{Coker}^{\Gamma}),
\]
where $\pi$ is the natural projection from $\Lambda$ to $\Lambda/T_s \Lambda$.
This implies that
$Y_{\Gamma}$ is a finitely generated $\mathbb{Z}_p$-module.
Hence all terms in (\ref{museq}) have finite $\mathbb{Z}_p$-rank.
Therefore we obtain
\begin{eqnarray*}
\mathrm{rank}_{\mathbb{Z}_p}( (X_{\widetilde{k}})_{\Gamma})
&=& \mathrm{rank}_{\mathbb{Z}_p}( Y_{\Gamma})
+ \mathrm{rank}_{\mathbb{Z}_p}( \mathrm{Coker}_{\Gamma}) -
\mathrm{rank}_{\mathbb{Z}_p}(  \mathrm{Coker}^{\Gamma})\\ \nonumber
&\geq & \mathrm{rank}_{\mathbb{Z}_p}( Y_{\Gamma}).
\end{eqnarray*}
Thus we get the conclusion.
\end{proof}~\\
\subsection{Normal case}~
In this subsection, we prove the inequality (B)
in the case where $k$ is not $p$-split $p$-rational
assuming that weak GGC does not hold for $p$ and $k$.
We first give a necessary and sufficient condition for $k$ to be $p$-split $p$-rational.
\begin{prop}\label{eqrational}
The following two conditions are equivalent:\\
$\mathrm{(i)}$ The decomposition group $\mathfrak{D_{p}}$ is a normal subgroup of $\mathrm{Gal}(\widetilde{k}/\mathbb{Q})$,
$\displaystyle{[\mathrm{Gal}(\widetilde{k}/k): \mathfrak{D_{p}}]}
= \# \left( \mathbb{Z}_p/g_0(0)\mathbb{Z}_p \right)$,
and $\lambda(k_{\infty}^c/k)\geq 2$, where $g_0(S)$ is the same power series defined in Lemma \ref{f(S,T)}.\\
$\mathrm{(ii)}$ The imaginary quadratic field $k$ is $p$-split $p$-rational.
\end{prop}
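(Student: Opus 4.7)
The plan is to verify the equivalence in both directions by translating each set of conditions through the bridging results already established in this section (Lemmas \ref{cond normal}, \ref{Momo}, \ref{trivial} and Proposition \ref{index}), together with the class-field-theoretic interpretation of $(X_{\widetilde{k}})_{\mathrm{Gal}(\widetilde{k}/k)}$.

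For the implication (i) $\Rightarrow$ (ii), I would first exploit normality of $\mathfrak{D_p}$ in $\mathrm{Gal}(\widetilde{k}/\mathbb{Q})$, which forces $\mathfrak{D_p} = \mathfrak{D_{p^*}}$, so that the fixed field $\widetilde{k}^{\mathfrak{D_p}}$ is simultaneously fixed by both $I_{\mathfrak{p}}$ and $I_{\mathfrak{p}^*}$; by the argument in the proof of Lemma \ref{cond normal}, this yields $\widetilde{k}^{\mathfrak{D_p}} \subset N_{\infty} \cap N_{\infty}^{\ast} = L_k \cap \widetilde{k} \subset L_k$, giving condition (c). Next, combining $\lambda(k_{\infty}^c/k) \geq 2$ with Lemma \ref{trivial} gives $X_{\widetilde{k}} \neq 0$, and then Nakayama's lemma together with Lemma \ref{Momo} yields $\#(\mathbb{Z}_p/g_0(0)\mathbb{Z}_p) > 1$; the index equality then forces $\widetilde{k}^{\mathfrak{D_p}} \neq k$, which is condition (b). To establish $L_k \subset \widetilde{k}$ (condition (a)), I would chain the bounds $\#(\mathbb{Z}_p/g_0(0)\mathbb{Z}_p) = [\mathrm{Gal}(\widetilde{k}/k):\mathfrak{D_p}] \leq [L_k \cap \widetilde{k}:k] \leq [L_k:k]$ (the middle inequality coming from Lemma \ref{cond normal}) and invoke the class-field-theoretic identification $\#(\mathbb{Z}_p/g_0(0)\mathbb{Z}_p) = [L_k:k]$, obtained by identifying $(X_{\widetilde{k}})_{\mathrm{Gal}(\widetilde{k}/k)}$ with $\mathrm{Cl}(k)_p$ via the maximal abelian pro-$p$ extension of $k$ unramified outside $p$ (using that Leopoldt's conjecture holds for imaginary quadratic $k$); this forces equality throughout.

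For the reverse implication (ii) $\Rightarrow$ (i), the conditions $L_k \subset \widetilde{k}$ and $\widetilde{k}^{\mathfrak{D_p}} \subset L_k$ immediately give $\widetilde{k}^{\mathfrak{D_p}} \subset L_k \cap \widetilde{k}$, so that $[\mathrm{Gal}(\widetilde{k}/k):\mathfrak{D_p}] \leq [L_k \cap \widetilde{k}:k]$ and Lemma \ref{cond normal} yields normality. The condition $\widetilde{k}^{\mathfrak{D_p}} \neq k$ gives $[\mathrm{Gal}(\widetilde{k}/k):\mathfrak{D_p}] > 1$, and the exact sequence underlying the proof of Proposition \ref{index} then forces $(X_{\widetilde{k}})_{\mathrm{Gal}(\widetilde{k}/k)} \neq 0$, hence $X_{\widetilde{k}} \neq 0$ and $\lambda(k_{\infty}^c/k) \geq 2$ by Lemma \ref{trivial}. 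For the remaining index equality, I would argue that under (ii) and normality one has $\mathfrak{D_p} \supset I_{\mathfrak{p}} \cdot I_{\mathfrak{p}^*} = \mathrm{Gal}(\widetilde{k}/N_{\infty} \cap N_{\infty}^*) = \mathrm{Gal}(\widetilde{k}/L_k)$, so that the image of $\mathfrak{D_p}$ in $\mathrm{Gal}(L_k/k)$ is the decomposition of $\mathfrak{p}$; combining the upper bound $[\mathrm{Gal}(\widetilde{k}/k):\mathfrak{D_p}] \leq \#(\mathbb{Z}_p/g_0(0)\mathbb{Z}_p)$ from Proposition \ref{index} with the class-field-theoretic identity $\#(\mathbb{Z}_p/g_0(0)\mathbb{Z}_p) = [L_k:k]$ and the containment $\widetilde{k}^{\mathfrak{D_p}} \subset L_k$ forces $\widetilde{k}^{\mathfrak{D_p}} = L_k$ and hence the desired equality.

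The main obstacle I anticipate is the class-field-theoretic identification $(X_{\widetilde{k}})_{\mathrm{Gal}(\widetilde{k}/k)} \cong \mathrm{Cl}(k)_p$, which serves as the bridge between the algebraic Iwasawa-theoretic invariant $g_0(0)$ and the arithmetic invariant $[L_k:k]$; this requires a careful ramification analysis at $\mathfrak{p}$ and $\mathfrak{p}^*$ in the maximal abelian pro-$p$ extension of $k$ unramified outside $p$, together with Leopoldt's conjecture. Once this identification is in hand, the rest of the argument is essentially bookkeeping with the lemmas already proved.
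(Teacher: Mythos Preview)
Your proposal contains a genuine error: the claimed identification $(X_{\widetilde{k}})_{\mathrm{Gal}(\widetilde{k}/k)} \cong \mathrm{Cl}(k)_p$, equivalently $\#(\mathbb{Z}_p/g_0(0)\mathbb{Z}_p) = [L_k:k]$, is false. A concrete counterexample appears later in the paper (Example~\ref{971}): for $p=3$, $k=\mathbb{Q}(\sqrt{-971})$ one has $L_k \subset \widetilde{k}$ and $[L_k:k]=3$, yet $\mathrm{ord}_3(g_0(0))=5$, so $\#(\mathbb{Z}_3/g_0(0)\mathbb{Z}_3)=3^5$. The group $(X_{\widetilde{k}})_{\mathrm{Gal}(\widetilde{k}/k)}$ is the Galois group over $\widetilde{k}$ of the maximal abelian-over-$k$ subextension of $L_{\widetilde{k}}$; this is in general much larger than $A_k$, and no ramification argument over $M_p(k)$ will collapse it to $A_k$. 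Since this identity is the load-bearing step in both your directions (to prove $L_k\subset\widetilde{k}$ in (i)$\Rightarrow$(ii), and to prove the index equality in (ii)$\Rightarrow$(i)), the argument as written does not go through.

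The paper's proof avoids this identity entirely and works directly with the five-term exact sequence~(\ref{ex index}) from the proof of Proposition~\ref{index}:
\[
0 \to D_{k_\infty^c} \to \mathrm{Gal}(\widetilde{k}/k_\infty^c) \xrightarrow{\ \delta\ } (X_{\widetilde{k}})_{\mathrm{Gal}(\widetilde{k}/k)} \to (X_{k_\infty^c})_{\mathrm{Gal}(k_\infty^c/k)} \to \mathrm{Gal}(\widetilde{k}/k_\infty^c) \to 0.
\]
The index hypothesis in (i), combined with Lemma~\ref{Momo}, says precisely that $\delta$ is surjective; by exactness this forces the last map to be an isomorphism, i.e.\ the maximal abelian-over-$k$ subfield $L$ of $L_{k_\infty^c}$ equals $\widetilde{k}$. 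Since $L_k k_\infty^c \subset L$, one gets $L_k \subset \widetilde{k}$ directly. Conversely, for (ii)$\Rightarrow$(i), the hypothesis $L_k\subset\widetilde{k}$ gives $M_p(k)=\widetilde{k}$, hence $L=\widetilde{k}$, so the last map is an isomorphism and $\delta$ is surjective; then $[\mathrm{Gal}(\widetilde{k}/k):\mathfrak{D_p}]=\#\mathrm{Image}(\delta)=\#(X_{\widetilde{k}})_{\mathrm{Gal}(\widetilde{k}/k)}=\#(\mathbb{Z}_p/g_0(0)\mathbb{Z}_p)$. The bridge between $g_0(0)$ and the arithmetic of $L_k$ is therefore the \emph{surjectivity of $\delta$}, not an equality of orders.
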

\begin{proof}
We suppose that (i) holds.
Let $L$ be the fixed field of $L_{k_{\infty}^c}$ by $TX_{k_{\infty}^c}$.
We note that
$\displaystyle{\mathrm{Gal}(L/k_{\infty}^c)=\left( X_{k_{\infty}^c} \right)_{\mathrm{Gal}(k_{\infty}^c/k)}}$.
Using the exact sequence (\ref{ex index}) in the proof of Proposition \ref{index},
we have
\begin{eqnarray*}
\#\mathrm{Image}( \mathrm{Gal}(\widetilde{k}/k_{\infty}^c)\rightarrow \left( X_{\widetilde{k}}\right)_{\mathrm{Gal}(\widetilde{k}/k)})
=\#  (X_{\widetilde{k}})_{\mathrm{Gal}(\widetilde{k}/k)}
\end{eqnarray*}
from $\displaystyle{[\mathrm{Gal}(\widetilde{k}/k): \mathfrak{D_{p}}]}
= \# \left( \mathbb{Z}_p/g_0(0)\mathbb{Z}_p \right)$
and from Lemma \ref{Momo}.
Hence the homomorphism
$\mathrm{Gal}(\widetilde{k}/k_{\infty}^c)\rightarrow \left( X_{\widetilde{k}}\right)_{\mathrm{Gal}(\widetilde{k}/k)}$
is surjective.
This implies that $L=\widetilde{k}$ from (\ref{ex index}).
Since $L_k k_{\infty}^c/k$ is an abelian extension, $L_k k_{\infty}^c$ is contained in $L$.
Therefore, we obtain $L_k \subset \widetilde{k}$.
Then it is easy to check that $M_p(k)=\widetilde{k}$,
where $M_p(k)$ is the maximal pro-$p$ abelian extension field of $k$ unramified outside all prime ideals lying above $p$.
By Lemma \ref{cond normal}, we have $\widetilde{k}^{\mathfrak{D_p}} \subset L_k$
since $\mathfrak{D_p}$ is a normal subgroup.
Furthermore, we have $\widetilde{k}^{\mathfrak{D_p}} \neq k$
from $\lambda(k_{\infty}^c/k)\geq 2$.
Hence $k$ is $p$-split $p$-rational.
\par
Conversely we suppose that $k$ is $p$-split $p$-rational.
By Definition \ref{rational}, we have
\[
L_k \subset \widetilde{k},\quad \widetilde{k}^{\mathfrak{D_{p}}} \neq k ,\quad \widetilde{k}^{\mathfrak{D_{p}}} \subset L_k.
\]
This implies that
$M_p(k) = \widetilde{k}$
and that
$\mathfrak{p}$ splits in $\widetilde{k}$.
Hence the homomorphism
$\mathrm{Gal}(\widetilde{k}/k_{\infty}^c)\rightarrow \left( X_{\widetilde{k}}\right)_{\mathrm{Gal}(\widetilde{k}/k)}$ above
is surjective.
We note that $\lambda(k_{\infty}^c/k)\geq 2$.
By the exact sequence (\ref{ex index}) in the proof of Proposition \ref{index}, we obtain
\[
[\mathrm{Gal}(\widetilde{k}/k): \mathfrak{D_{p}}]
=\#  (X_{\widetilde{k}})_{\mathrm{Gal}(\widetilde{k}/k)}
=\# \left( \mathbb{Z}_p/g_0(0) \mathbb{Z}_p \right).
\]
Since $\widetilde{k}^{\mathfrak{D_{p}}}$ is contained in $L_k$, we have
$\displaystyle{[\mathrm{Gal}(\widetilde{k}/k): \mathfrak{D_{p}}] \leq [L_k :k]}$.
By Lemma \ref{cond normal}, $\mathfrak{D_p}$ is a normal subgroup of $\mathrm{Gal}(\widetilde{k}/\mathbb{Q})$.
Thus we get the conclusion.
\end{proof}
For a power series $V(S) = \displaystyle{\sum_{i=0}^{\infty} b_i S^i }\in \mathbb{Z}_p[[S]]$,
we put
\begin{eqnarray*}
\mu(V(S)) &=& \mathrm{sup} \{ ~i~|~V(S) \equiv 0 \mathrm{~mod~}p^i~ ~\},\\
\lambda(V(S)) &=& \mathrm{inf} \{ ~i~|~b_i \not\equiv 0  \mathrm{~mod~}p~\}.
\end{eqnarray*}
Then we call $\mu(V(S)), \lambda(V(S))$ the $\mu$- and $\lambda$-invariant of $V(S)$, respectively.
\par
Next, we get a lower bound of the $\lambda$-invariant of $g_0(S)$ in the case where
the $\mu$-invariant of $g_0(S)$ is zero. 
Recall that $p^{n_0} = \displaystyle{[\mathrm{Gal}(\widetilde{k}/k): \mathfrak{D_{p^{\ast}}}]}$.
\begin{prop}\label{normal g}
Assume that
$\mathfrak{D_{p}}$ is a normal subgroup of $\mathrm{Gal}(\widetilde{k}/\mathbb{Q})$.
Assume also that $k$ is not $p$-split $p$-rational
and that $\lambda(k_{\infty}^c/k)\geq 2$.
If $\mu(g_0(S))$ is zero, then we have $\lambda(g_0(S)) \geq  p^{n_0}.$ 
\end{prop}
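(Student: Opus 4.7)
The plan is to combine normality of $\mathfrak{D_p}$, the failure of $p$-split $p$-rationality, and a module-theoretic analysis of the $\mathfrak{D_p}$-coinvariants of $X_{\widetilde{k}}$.

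First, I would pin down $\mathfrak{D_p}$ explicitly. Since $\mathfrak{D_p}$ is normal in $\mathrm{Gal}(\widetilde{k}/\mathbb{Q})$, Lemma \ref{cond normal} yields $\widetilde{k}^{\mathfrak{D_p}} \subset L_k \cap \widetilde{k}$, and the proof of that lemma shows $L_k \cap \widetilde{k} \subset k_\infty^a$. Since $[\widetilde{k}^{\mathfrak{D_p}} : k] = p^{n_0}$, the fixed field must be the $n_0$-th layer $k_{n_0}^a$; hence $\mathfrak{D_p} = \overline{\langle \sigma^{p^{n_0}}, \tau \rangle}$ (in particular $\tau \in \mathfrak{D_p}$, reflecting that $\mathfrak{p}$ is totally ramified in $k_\infty^c/k$), and $\mathfrak{p}$ splits completely into $p^{n_0}$ primes in $k_{n_0}^a/k$.

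Second, Proposition \ref{eqrational} characterizes $p$-split $p$-rationality (under normality of $\mathfrak{D_p}$ and $\lambda(k_\infty^c/k) \geq 2$) by the equality $p^{n_0} = \#(\mathbb{Z}_p/g_0(0)\mathbb{Z}_p)$. Since $k$ is assumed not $p$-split $p$-rational, this equality fails; combining with the upper bound in Proposition \ref{index} we obtain the strict inequality $v_p(g_0(0)) \geq n_0 + 1$. The case $n_0 = 0$ is then immediate: $p \mid g_0(0)$ together with $\mu(g_0) = 0$ gives $\lambda(g_0) \geq 1 = p^0$. Thus we may assume $n_0 \geq 1$ henceforth.

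Third, for $n_0 \geq 1$, the goal is to prove $g_0(S) \in (p, \omega_{n_0}(S))\mathbb{Z}_p[[S]]$, where $\omega_{n_0}(S) = (1+S)^{p^{n_0}} - 1 \equiv S^{p^{n_0}} \pmod p$; combined with $\mu(g_0) = 0$ this yields $\lambda(g_0) \geq p^{n_0}$. To establish the containment I would analyze $(X_{\widetilde{k}})_{\mathfrak{D_p}} = X_{\widetilde{k}}/(\omega_{n_0}(S), T)X_{\widetilde{k}}$, on which $g_0(S)$ acts as zero, by re-running the exact sequence in the proof of Proposition \ref{index} with base $k_{n_0}^a$ in place of $k$. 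At this new base each prime above $\mathfrak{p}$ has full decomposition group $\mathfrak{D_p} = \mathrm{Gal}(\widetilde{k}/k_{n_0}^a)$, so the analogue of the ``decomposition index'' is trivial, and the corresponding exact sequence collapses to an \emph{equality} matching $\#(X_{\widetilde{k}})_{\mathfrak{D_p}}$ with the $p$-adic valuation of the constant term of an analogous annihilator at the new level. Transporting this equality back to the original level via the Weierstrass decomposition of $g_0(S)$, and crucially using the strict inequality $v_p(g_0(0)) \geq n_0 + 1$, forces the reduction of $g_0(S)$ modulo $(p, \omega_{n_0}(S))$ to vanish.

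The main obstacle is this third step: the transfer between the Iwasawa-algebra levels over $k$ and over $k_{n_0}^a$ is delicate, since the analog of $g_0(S)$ at the new base is not simply the restriction or reduction of the original. I expect the cleanest bookkeeping goes through the matrix $A$ of Lemma \ref{f(S,T)}, tracking its image modulo $(p, \omega_{n_0}(S))$ to extract the needed Fitting-type ideal containment quantitatively rather than qualitatively.
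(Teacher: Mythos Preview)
Your first two steps are fine and match the paper: you correctly identify $\widetilde{k}^{\mathfrak{D_p}}=k_{n_0}^a$, and the strict inequality $v_p(g_0(0))>n_0$ from Propositions~\ref{index} and~\ref{eqrational} is exactly what is used.

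The gap is in your third step. You propose to show $g_0(S)\in(p,\omega_{n_0}(S))$ by analyzing $(X_{\widetilde{k}})_{\mathfrak{D_p}}$ and ``re-running'' the exact sequence of Proposition~\ref{index} over $k_{n_0}^a$, but you never actually produce an element of that coinvariant module whose annihilator forces the desired containment, and you yourself note that the analog of $g_0$ at the new level is not the reduction of the original. Knowing that $g_0(S)$ annihilates $(X_{\widetilde{k}})_{\mathfrak{D_p}}$ is automatic (it already annihilates $X_{\widetilde{k}}/TX_{\widetilde{k}}$); to extract $g_0(S)\equiv 0 \bmod (p,\omega_{n_0}(S))$ from this you would need a free rank-one summand of $(X_{\widetilde{k}})_{\mathfrak{D_p}}\otimes\mathbb{F}_p$ over $\mathbb{F}_p[S]/(S^{p^{n_0}})$, and nothing in your outline supplies one. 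The appeal to ``the Weierstrass decomposition of $g_0(S)$'' and to the matrix $A$ is speculative.

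The paper's route is different and considerably simpler. It bypasses $(X_{\widetilde{k}})_{\mathfrak{D_p}}$ entirely and works with the anticyclotomic module $X_{k_\infty^a}$. Since $p$ splits completely in $k_{n_0}^a$ and Leopoldt holds there (Brumer), $k_{n_0}^a$ has a $\mathbb{Z}_p^{\,p^{n_0}+1}$-extension $\widetilde{k_{n_0}^a}$, and $\widetilde{k_{n_0}^a}/k_\infty^a$ is unramified abelian with $\mathrm{Gal}(\widetilde{k_{n_0}^a}/k_\infty^a)\cong\mathbb{Z}_p[[S]]/((1+S)^{p^{n_0}}-1)$. The resulting surjection $X_{k_\infty^a}\twoheadrightarrow\mathrm{Gal}(\widetilde{k_{n_0}^a}/k_\infty^a)$ forces $\mathrm{char}_{\mathbb{Z}_p[[S]]}(X_{k_\infty^a})\subset((1+S)^{p^{n_0}}-1)$. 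On the other hand, the surjection $(\Lambda/f)^{\oplus r}\twoheadrightarrow X_{\widetilde{k}}$ reduces mod $T$ to $(\mathbb{Z}_p[[S]]/g_0)^{\oplus r}\twoheadrightarrow X_{\widetilde{k}}/TX_{\widetilde{k}}$, so by Lemma~\ref{Ozaki} one gets $(Sg_0(S)^r)\subset\mathrm{char}(X_{k_\infty^a})\subset((1+S)^{p^{n_0}}-1)$; square-freeness of $(1+S)^{p^{n_0}}-1$ then yields $Sg_0(S)=\{(1+S)^{p^{n_0}}-1\}\,\widetilde{g_0}(S)$ \emph{integrally}. Now your step~2 enters: $v_p(g_0(0))>n_0$ forces $v_p(\widetilde{g_0}(0))>0$, hence $\lambda(\widetilde{g_0})\geq 1$ (as $\mu(g_0)=0$), giving $\lambda(g_0)=(p^{n_0}-1)+\lambda(\widetilde{g_0})\geq p^{n_0}$. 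The key idea you are missing is to produce the divisibility $(1+S)^{p^{n_0}}-1 \mid Sg_0(S)$ via the explicit unramified quotient coming from $\widetilde{k_{n_0}^a}$, rather than via coinvariant bookkeeping.
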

\begin{proof}
Since $k_{n_0}^a/k$ is an abelian extension, Leopoldt's conjecture holds for $p$ and $k_{n_0}^a$ (\cite{Bu}).
Hence we have $\mathrm{Gal}(\widetilde{k_{n_0}^a}/k_{n_0}^a) \cong \mathbb{Z}_p^{\oplus{p^{n_0}+1}}$,
where $\widetilde{k_{n_0}^a}$ is the composite of all $\mathbb{Z}_p$-extension fields over $k_{n_0}^a$.
Since $p$ splits in $k_{n_0}^a$, $\widetilde{k_{n_0}^a}/k_{\infty}^a$ is an unramified abelian extension.
Thus we have a surjective homomorphism
\[
X_{k_{\infty}^a} \rightarrow \mathrm{Gal}(\widetilde{k_{n_0}^a}/k_{\infty}^a).
\]
Furthermore, we see that
\[
 \mathrm{Gal}(\widetilde{k_{n_0}^a}/k_{\infty}^a) \cong  \mathbb{Z}_p[[S]]/((1+S)^{p^{n_0}}-1).
\]
Hence we have $\mathrm{char}_{\mathbb{Z}_p[[S]]}(X_{k_{\infty}^a}) \subset ((1+S)^{p^{n_0}}-1)$.
On the other hand, there exists a surjective homomorphism $(\Lambda/f(S,T)\Lambda)^{\oplus r} \rightarrow X_{\widetilde{k}}$,
where $r$ is a positive integer and
$f(S,T)$ is the same power series defined in Lemma \ref{f(S,T)}.
Then we have a surjective homomorphism
$(\mathbb{Z}_p[[S]]/g_0(S) \mathbb{Z}_p[[S]])^{\oplus r} \rightarrow X_{\widetilde{k}}/TX_{\widetilde{k}}$.
This implies that
\[
(Sg_0(S)^r) \subset \mathrm{char}_{ \mathbb{Z}_p[[S]]}(X_{k_{\infty}^a}) \subset ((1+S)^{p^{n_0}}-1).
\]
Since $(1+S)^{p^{n_0}}-1$ is square-free, we obtain $(Sg_0(S)) \subset ((1+S)^{p^{n_0}}-1).$
Then there exists a power series $\widetilde{g_0}(S) \in \mathbb{Z}_p[[S]]$ such that
$Sg_0(S) = \{(1+S)^{p^{n_0}}-1\}\widetilde{g_0}(S)$.
We have
$p^{n_0}=[\mathrm{Gal}(\widetilde{k}/k): \mathfrak{D_{p}}] < \# \left( \mathbb{Z}_p/g_0(0)\mathbb{Z}_p \right)$
by Propositions \ref{index} and \ref{eqrational}.
Hence, we get
$\mathrm{ord}_p(\widetilde{g_0}(0)) > 0$,
where $\mathrm{ord}_p$ is the normalized additive valuation on $\mathbb{Q}_p$
of $p$-adic numbers such that $\mathrm{ord}_p(p)=1$.
Therefore, we obtain $\lambda(g_0(S)) \geq p^{n_0}$ because $\mu(g_0(S))$ is zero.
\end{proof}
By the proposition above, we can prove the inequality (B).
\begin{prop}\label{normal rankXM}
Assume the same condition as in Proposition \ref{cycsub}.
Assume also that $\mathfrak{D_{p}}$ is a normal subgroup of $\mathrm{Gal}(\widetilde{k}/\mathbb{Q})$
and that $k$ is not $p$-split $p$-rational.
Then
the inequality $\mathrm{(B)}$ holds.
\end{prop}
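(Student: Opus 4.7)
The plan is to combine Proposition~\ref{rank} with the factorization of $g_0(S)$ extracted from the proof of Proposition~\ref{normal g}. By Proposition~\ref{Fuj} and the hypothesis that weak {\rm{GGC}} does not hold, I will identify $\mathrm{Gal}(M_{\mathfrak{p^{\ast}}}(N_{\infty})/\widetilde{k}) \cong (X_{\widetilde{k}})_{\Gamma}$, so Proposition~\ref{rank} applied to the cyclic submodule $Y \cong \Lambda/(f(S,T))$ produced by Proposition~\ref{cycsub} reduces the problem to $\mathrm{rank}_{\mathbb{Z}_p}(Y_{\Gamma}) \geq p^{n_0}$. Under the identification $\Lambda/(T_s) \cong \mathbb{Z}_p[[S]]$ set up just before Proposition~\ref{rank}, one has $Y_{\Gamma} \cong \mathbb{Z}_p[[S]]/(h(S))$ with $h(S) = f(S, T')$ and $T' = (1+S)^{-up^s} - 1$; hence $\mathrm{rank}_{\mathbb{Z}_p}(Y_{\Gamma}) = \lambda(h)$. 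Propositions~\ref{index} and~\ref{eqrational}, combined with normality of $\mathfrak{D_p}$ and the non-$p$-split-$p$-rationality of $k$, force $p^{n_0} \leq p^s$ (so the minimum in $(\mathrm{B})$ equals $p^{n_0}$) and $\mathrm{ord}_p(g_0(0)) \geq n_0 + 1$ (so $\widetilde{g_0}(0) \in p\mathbb{Z}_p \setminus \{0\}$).

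The heart of the argument will be that $u_0(S) := ((1+S)^{p^{n_0}} - 1)/S$ divides both $g_0(S)$ and $T'(S)$ in $\mathbb{Z}_p[[S]]$. The divisibility $u_0 \mid g_0$ is the factorization $g_0 = u_0 \widetilde{g_0}$ appearing in the proof of Proposition~\ref{normal g}; the divisibility $u_0 \mid T'$ uses $n_0 \leq s$, which gives $(1+S)^{p^{n_0}} - 1 \mid (1+S)^{up^s} - 1$, and hence $u_0 \mid T' = -(1+S)^{-up^s}((1+S)^{up^s} - 1)$. Consequently $u_0$ divides every term of $h = g_0 + \sum_{i \geq 1} g_i T'^i$, so writing $h = u_0 \cdot w$ with $w \in \mathbb{Z}_p[[S]]$ and evaluating at $S = 0$ yields $w(0) = g_0(0)/u_0(0) = g_0(0)/p^{n_0} = \widetilde{g_0}(0) \in p\mathbb{Z}_p \setminus \{0\}$. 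A direct computation gives $\lambda(u_0) = p^{n_0} - 1$, and in the regime $\mu(w) = 0$ the multiplicativity of the $\lambda$-invariant yields $\lambda(h) = (p^{n_0} - 1) + \lambda(w) \geq p^{n_0}$, since $w(0) \in p\mathbb{Z}_p$ forces $\lambda(w) \geq 1$.

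The main technical obstacle is controlling the $\mu$-invariant of $w$ so that this multiplicativity is applicable. A reduction modulo $p$, using that $\overline{T'} = (1+S^{p^s})^{-u} - 1$ has $S$-order exactly $p^s \geq p^{n_0}$ in $\mathbb{F}_p[[S]]$, shows that every contribution $g_i T'^i$ with $i \geq 1$ has $S$-order at least $p^s$ after reduction, while the $i=0$ term reduces to $\overline{g_0} = S^{p^{n_0}-1} \overline{\widetilde{g_0}}$. In the generic case $\mu(\widetilde{g_0}) = 0$, every contribution to $\overline{h}$ has $S$-order at least $p^{n_0}$, giving $\lambda(h) \geq p^{n_0}$ immediately. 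The edge case $\mu(\widetilde{g_0}) \geq 1$ corresponds to $\overline{h} = 0$ (i.e.\ $\mu(h) \geq 1$) and will require separate iterative bookkeeping of $p$-adic valuations: one peels off the maximal power of $p$ from $g_0 = u_0 \widetilde{g_0}$ and from the higher $g_i$'s, reapplies the same multiplicativity analysis to $h/p^{\mu(h)}$, and recovers $\lambda(h) \geq p^{n_0}$ in all remaining cases.
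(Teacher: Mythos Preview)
Your strategy matches the paper's exactly: reduce to $\mathrm{rank}_{\mathbb{Z}_p}(Y_\Gamma)\geq p^{n_0}$ via Propositions~\ref{cycsub} and~\ref{rank}, identify $Y_\Gamma\cong\mathbb{Z}_p[[S]]/(h(S))$ with $h(S)=f(S,(1+S)^{-up^s}-1)$, and bound $\lambda(h)$ from below using a mod-$p$ computation together with Proposition~\ref{normal g}. Your factorization $h=u_0 w$ is a pleasant repackaging of that last step.

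The only problem is the ``main technical obstacle'' you raise: it is not one, and by not disposing of it you leave the argument unfinished. The proof of Proposition~\ref{rank} (which you already invoke) shows that $Y_\Gamma$ is a finitely generated $\mathbb{Z}_p$-module; this uses only $\mu(N_\infty/k)=0$, which holds unconditionally. Hence $\mu(h)=0$. The paper's proof of the present proposition cites this fact explicitly at the outset. Once $\mu(h)=0$ is in hand, $\mu(w)=\mu(h)-\mu(u_0)=0$ automatically, and your factorization argument closes immediately: $\lambda(h)=(p^{n_0}-1)+\lambda(w)\geq p^{n_0}$ since $w(0)=\widetilde{g_0}(0)\in p\mathbb{Z}_p$. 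Equivalently, in the mod-$p$ picture you describe, $\mu(h)=0$ guarantees $\overline{h}\neq 0$; since $\overline{h}\equiv\overline{g_0}\pmod{S^{p^s}}$ with $p^s\geq p^{n_0}$, one reads off $\lambda(h)\geq p^{n_0}$ both when $\mu(g_0)=0$ (via Proposition~\ref{normal g}) and when $\mu(g_0)>0$ (trivially $\lambda(h)\geq p^s$). In particular your assertion that the edge case $\mu(\widetilde{g_0})\geq 1$ ``corresponds to $\mu(h)\geq 1$'' is incorrect, and no ``iterative bookkeeping'' is needed.
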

\begin{proof}
We have $\lambda(k_{\infty}^c /k)\geq 2$ since weak GGC does not hold.
By Proposition \ref{cycsub}, there exists a $\Lambda$-submodule $Y$ of $X_{\widetilde{k}}$
such that $Y$ is isomorphic to $\Lambda/(f(S,T))$.
We note that $Y_{\Gamma}$ is a finitely generated $\mathbb{Z}_p$-module
by the proof of Proposition \ref{rank}.
We claim that $\mathrm{rank}_{\mathbb{Z}_p}(Y_{\Gamma}) \geq p^{n_0}$.
By Lemma \ref{Zp}, there exists a unit $u$ such that
\[
\Gamma=\mathrm{Gal}(\widetilde{k}/N_{\infty}) = \overline{ \langle \sigma^{up^{s}} \tau \rangle.}
\]
Then we have
\begin{eqnarray*}
Y_{\Gamma} &\cong & \Lambda/(f(S,T), (1+S)^{up^s}(1+T)-1)\\
&\cong & \mathbb{Z}_p[[S]] /(f(S, (1+S)^{-up^s}-1)).
\end{eqnarray*}
By the definition of $f(S,T)$,
we obtain
\begin{eqnarray*}
&~&f(S,  (1+S)^{-up^s}-1)\\
&=&
\{(1+S)^{-up^s}-1\}^{\lambda^{\ast}} + \sum_{i=0}^{\lambda^{\ast}-1} g_{i}(S) \{ (1+S)^{-up^s}-1\}^{i}\\
&\equiv &
\left\{ \sum_{j=1}^{\infty} \dbinom{-u}{j}S^{jp^s} \right\}^{\lambda^{\ast}}+
\sum_{i=0}^{\lambda^{\ast}-1} g_{i}(S) \left\{ \sum_{j=1}^{\infty} \dbinom{-u}{j}S^{jp^s} \right\}^{i}
\quad \mathrm{~mod~}p\\
&\equiv &
\begin{cases}
g_0(S)  \quad \mathrm{~mod~}(p, S^{p^s}) \quad &\mathrm{~if~} \mu(g_0(S))=0,\\
0      \hspace{6.3mm} \quad \mathrm{~mod~}(p, S^{p^s}) \quad &\mathrm{~if~} \mu(g_0(S))>0.
\end{cases}
\end{eqnarray*}
By Proposition \ref{normal g}, we obtain $\lambda(f(S,  (1+S)^{-up^s}-1)) \geq p^{n_0}$.
This implies that
$\mathrm{rank}_{\mathbb{Z}_p}(Y_{\Gamma}) \geq p^{n_0}$.
Thus we obtain 
$\mathrm{rank}_{\mathbb{Z}_p}((X_{\widetilde{k}})_{\Gamma}) \geq p^{n_0}$
by Proposition \ref{rank}.
Using Proposition \ref{index}, we get the conclusion.
\end{proof}~\\
\subsection{Non-normal case}~
In this subsection, we prove  the inequality (B) in
the case where $\mathfrak{D_{p}}$ is not a normal subgroup of $\mathrm{Gal}(\widetilde{k}/\mathbb{Q})$
under the assumption that weak GGC does not hold for $p$ and $k$.
We first show the following two propositions.
\begin{prop}\label{rankXM}
Assume the same condition as in Proposition \ref{cycsub}.
Assume also that $\mathfrak{D_{p}}$ is not a normal subgroup of $\mathrm{Gal}(\widetilde{k}/\mathbb{Q})$.
Suppose that $\mu(k_{\infty}^a/k)=0$,
then we have $\lambda(k_{\infty}^a/k) \geq [L_{k} \cap \widetilde{k}:k] + 1$.
\end{prop}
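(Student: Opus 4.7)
My plan is to adapt the argument of Proposition \ref{normal g} to the non-normal setting, with $p^{n_0}$ replaced by $p^s$ in the key divisibility, and then to extract the extra ``$+1$'' in $\lambda$ via a derivative computation at $S=0$ that exploits the strict inequality $p^s < p^{n_0}$ available in the non-normal case.

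First I would apply Proposition \ref{split} to get that $p$ splits completely in $k_s^a = L_k \cap \widetilde{k}$, and invoke Leopoldt's conjecture for the abelian extension $k_s^a/\mathbb{Q}$ (\cite{Bu}) to obtain $\mathrm{Gal}(\widetilde{k_s^a}/k_s^a) \cong \mathbb{Z}_p^{\oplus(p^s+1)}$. As in the proof of Proposition \ref{normal g}, the inertia analysis at each of the $2p^s$ primes of $k_s^a$ above $p$---each of local degree $1$ over $\mathbb{Q}_p$ and totally ramified in $k_\infty^a/k_s^a$---shows that $\widetilde{k_s^a}/k_\infty^a$ is unramified. The action of $\mathrm{Gal}(k_\infty^a/k)$ on $\mathrm{Gal}(\widetilde{k_s^a}/k_\infty^a)$ factors through $\mathrm{Gal}(k_s^a/k) \cong \mathbb{Z}/p^s\mathbb{Z}$, so $(1+S)^{p^s}-1$ annihilates this quotient; combined with a $\mathbb{Z}_p$-rank count (the quotient has rank $p^s = \lambda((1+S)^{p^s}-1)$) and the square-freeness of $(1+S)^{p^s}-1$, this gives $\mathrm{char}_{\mathbb{Z}_p[[S]]}(\mathrm{Gal}(\widetilde{k_s^a}/k_\infty^a)) = ((1+S)^{p^s}-1)$, and hence
\[
\mathrm{char}_{\mathbb{Z}_p[[S]]}(X_{k_\infty^a}) \subset ((1+S)^{p^s}-1).
\]

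Computing this characteristic ideal from the $\widetilde{k}$-side: by Proposition \ref{cycsub} together with the square-free analysis of Lemma \ref{an} one has $\mathrm{char}_\Lambda(X_{\widetilde k}) = (f(S,T))$, and the absence of non-trivial pseudo-null submodules of $X_{\widetilde k}$ (the failure of weak GGC) forces $(X_{\widetilde k})^T = 0$, since it is annihilated by $T$ and by $f(S,T)$, which are coprime. Applying \cite[I.1.3, Lemma 4]{PR} then gives $\mathrm{char}_{\mathbb{Z}_p[[S]]}(X_{\widetilde k}/TX_{\widetilde k}) = (g_0(S))$, and combining with the exact sequence of Lemma \ref{Ozaki} for $k_\infty^a$ (and the identity $\mathrm{char}_{\mathbb{Z}_p[[S]]}(\mathrm{Gal}(\widetilde k/k_\infty^a)) = (S)$) yields
\[
\mathrm{char}_{\mathbb{Z}_p[[S]]}(X_{k_\infty^a}) = (Sg_0(S)).
\]

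Combining both sides, $(1+S)^{p^s}-1$ divides $Sg_0(S)$, so I may write $Sg_0(S) = ((1+S)^{p^s}-1)\tilde g_0(S)$ with $\tilde g_0 \in \mathbb{Z}_p[[S]]$. Differentiating at $S = 0$ gives $g_0(0) = p^s\tilde g_0(0)$. The non-normal case of Proposition \ref{index} provides $p^s < p^{n_0} \le \#(\mathbb{Z}_p/g_0(0)\mathbb{Z}_p)$, so $\mathrm{ord}_p(g_0(0)) > s$ and therefore $\mathrm{ord}_p(\tilde g_0(0)) > 0$. The hypothesis $\mu(k_\infty^a/k) = 0$ forces $\mu(g_0) = 0$ and then $\mu(\tilde g_0) = 0$ by additivity of $\mu$ in the equality $Sg_0 = ((1+S)^{p^s}-1)\tilde g_0$; combined with $\tilde g_0(0) \equiv 0 \pmod p$ this yields $\lambda(\tilde g_0) \ge 1$, and therefore
\[
\lambda(k_\infty^a/k) \;=\; \lambda(Sg_0) \;=\; p^s + \lambda(\tilde g_0) \;\ge\; p^s + 1.
\]
The crux of the argument---and the source of the extra ``$+1$''---is this derivative step combined with the strict inequality $p^s < p^{n_0}$ from Proposition \ref{index}, which is available precisely because $\mathfrak{D}_{\mathfrak{p}}$ is not normal; this is the non-normal analogue of, and slightly refines, the ``$\mathrm{ord}_p(\tilde g_0(0))>0$'' step in the proof of Proposition \ref{normal g}.
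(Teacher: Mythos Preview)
Your proof is correct and follows the same overall strategy as the paper: both establish the containment $\mathrm{char}_{\mathbb{Z}_p[[S]]}(X_{k_\infty^a}) \subset ((1+S)^{p^s}-1)$ via the surjection onto $\mathrm{Gal}(\widetilde{k_s^a}/k_\infty^a)$, relate $\mathrm{char}_{\mathbb{Z}_p[[S]]}(X_{k_\infty^a})$ to $g_0(S)$, and finish with the strict inequality $s < \mathrm{ord}_p(g_0(0))$ coming from the non-normal case of Proposition~\ref{index}.

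The execution differs in one substantive way. The paper argues by contradiction (assuming $\lambda(k_\infty^a/k)=p^s$, so that $\mathrm{char}(X_{k_\infty^a})=((1+S)^{p^s}-1)$) and then sandwiches $\mathrm{char}(X_{\widetilde{k}}/TX_{\widetilde{k}})$: the surjection $(\Lambda/f)^{\oplus r}\twoheadrightarrow X_{\widetilde{k}}$ together with square-freeness of $(1+S)^{p^s}-1$ gives one containment, and the snake-lemma rank inequality coming from the embedded $Y\cong\Lambda/(f)$ gives the other, yielding $Sg_0(S)\mid (1+S)^{p^s}-1$ and hence $\mathrm{ord}_p(g_0(0))\le s$, a contradiction. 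You instead observe directly that $\mathrm{char}_\Lambda(X_{\widetilde{k}})=(f(S,T))$ (this is indeed contained in the proof of Proposition~\ref{cycsub}, though not in its statement) and apply Perrin-Riou's lemma with $(X_{\widetilde{k}})^T=0$ to get $\mathrm{char}(X_{\widetilde{k}}/TX_{\widetilde{k}})=(g_0(S))$ exactly, hence $\mathrm{char}(X_{k_\infty^a})=(Sg_0(S))$; the divisibility then runs the other way, $(1+S)^{p^s}-1\mid Sg_0(S)$, and the conclusion follows without contradiction. Your route is a bit more streamlined and avoids the snake-lemma bookkeeping; the paper's route, on the other hand, does not need to extract the equality $\mathrm{char}_\Lambda(X_{\widetilde{k}})=(f)$ from inside a proof.
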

\begin{proof}
We suppose that $\mu(k_{\infty}^a/k)=0$.
By Proposition \ref{split},
$p$ splits completely in $L_k \cap \widetilde{k}$.
Hence we have $\lambda(k_{\infty}^a/k) \geq [L_{k} \cap \widetilde{k}:k]=p^s$.
We assume
that $\lambda(k_{\infty}^a/k) = [L_{k} \cap \widetilde{k}:k]$.
By the same reason as in the proof of Proposition \ref{normal g},
we have a surjective homomorphism
$
X_{k_{\infty}^a} \rightarrow \mathrm{Gal}(\widetilde{k_s^a}/k_{\infty}^a).
$
Then we have $\mathrm{char}_{\mathbb{Z}_p[[S]]}(X_{k_{\infty}^a})$ $=\mathrm{char}_{\mathbb{Z}_p[[S]]}(\mathrm{Gal}(\widetilde{k_s^a}/k_{\infty}^a))$
because we suppose that $\lambda(k_{\infty}^a/k)$ $= [L_{k} \cap \widetilde{k}:k]$.
Using
$\mathrm{char}_{\mathbb{Z}_p[[S]]}(\mathrm{Gal}(\widetilde{k_s^a}/k_{\infty}^a)) = ((1+S)^{p^s}-1)$,
we get  $\mathrm{char}_{\mathbb{Z}_p[[S]]}(X_{k_{\infty}^a}) =((1+S)^{p^s}-1).$
By Proposition \ref{cycsub}, we have a $\Lambda$-submodule $Y$ of $X_{\widetilde{k}}$
such that
$Y$
is isomorphic to $\Lambda/(f(S,T))$
and have a exact sequence
\[
0 \rightarrow Y \rightarrow X_{\widetilde{k}} \rightarrow \mathrm{Coker} \rightarrow 0.
\]
The $\Lambda$-modules
$(X_{\widetilde{k}})^{\mathrm{Gal}(\widetilde{k}/k_{\infty}^a)}$ and
$\mathrm{Coker}^{\mathrm{Gal}(\widetilde{k}/k_{\infty}^a)}$
are pseudo-null as $\Lambda$-modules
because
$T$ is coprime to $f(S,T)$.
Hence we get the exact sequence
\begin{eqnarray*}
0
\rightarrow   \mathrm{Coker}^{\mathrm{Gal}(\widetilde{k}/k_{\infty}^a)}
\rightarrow    Y_{\mathrm{Gal}(\widetilde{k}/k_{\infty}^a)} \rightarrow (X_{\widetilde{k}})_{\mathrm{Gal}(\widetilde{k}/k_{\infty}^a)} \rightarrow 
\mathrm{Coker}_{\mathrm{Gal}(\widetilde{k}/k_{\infty}^a)} \rightarrow 0.
\end{eqnarray*}
Since we suppose that $\mu(k_{\infty}^a/k)=0$, $(X_{\widetilde{k}})_{\mathrm{Gal}(\widetilde{k}/k_{\infty}^a)}$ is a finitely
generated $\mathbb{Z}_p$-module.
By the same method as in the proof of Proposition \ref{rank},
we see that $Y_{\mathrm{Gal}(\widetilde{k}/k_{\infty}^a)} $ is finitely generated as a $\mathbb{Z}_p$-module.
This implies that $\mu(g_0(S))=0$ and that
\[
\mathrm{rank}_{\mathbb{Z}_p}(X_{\widetilde{k}}/T X_{\widetilde{k}})
\geq \mathrm{rank}_{\mathbb{Z}_p}(\mathbb{Z}_p[[S]]/ g_0(S) \mathbb{Z}_p[[S]]).
\]
By Lemma \ref{f(S,T)}, there exists a surjective homomorphism
$\left( \Lambda /f(S,T) \Lambda \right)^{\oplus r} \rightarrow  X_{\widetilde{k}}$,
where $r$ is a positive integer.
Hence
this homomorphism induces a surjective homomorphism
$\left( \mathbb{Z}_p[[S]] /g_0(S) \mathbb{Z}_p[[S]] \right)^{\oplus r}
\rightarrow   X_{\widetilde{k}}/T X_{\widetilde{k}}$.
Then we have $(g_0(S)^r) \subset \mathrm{char}_{\mathbb{Z}_p[[S]]}(X_{\widetilde{k}}/T X_{\widetilde{k}})$.
Since $(1+S)^{p^s}-1$ is square-free, we obtain
$\mathrm{char}_{\mathbb{Z}_p[[S]]}(X_{\widetilde{k}}/T X_{\widetilde{k}})$ 
$\subset (g_0(S)).$
This implies that
\[
\mathrm{char}_{\mathbb{Z}_p[[S]]}(X_{k_{\infty}^a}) \subset (Sg_0(S)).
\]
Therefore there exists a power series $v(S)$ such that
$(1+S)^{p^s}-1=Sg_0(S)v(S)$.
Hence we have $p^s=g_0(0)v(0)$ and $s \geq \mathrm{ord}_p(g_0(0))$.
This contradicts Proposition \ref{index}.
Thus we get the conclusion.
\end{proof}
\begin{prop}\label{g0}
Assume the same condition as in Proposition \ref{cycsub}.
Assume also that $\mathfrak{D_{p}}$ is not a normal subgroup of $\mathrm{Gal}(\widetilde{k}/\mathbb{Q})$.
Suppose that $\mu(g_{0}(S)) =0$, then we have $\lambda(g_{0}(S)) \geq [L_{k} \cap \widetilde{k}:k]$.
\end{prop}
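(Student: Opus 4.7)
The plan is to follow the template of Proposition \ref{normal g}, replacing the role of $k_{n_0}^a$ (in which $p$ splits completely in the normal case) by the field $k_s^a = L_k \cap \widetilde{k}$, which inherits complete splitting of $p$ from Proposition \ref{split}.

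First I would produce the chain
\[
(S g_0(S)^r) \subset \mathrm{char}_{\mathbb{Z}_p[[S]]}(X_{k_{\infty}^a}) \subset ((1+S)^{p^s}-1)
\]
for some positive integer $r$. The left inclusion follows exactly as in the proof of Proposition \ref{rankXM}: Lemma \ref{f(S,T)} yields a surjection $(\Lambda/f(S,T)\Lambda)^{\oplus r} \to X_{\widetilde{k}}$, which upon reduction modulo $T$ produces $(g_0(S)^r) \subset \mathrm{char}_{\mathbb{Z}_p[[S]]}(X_{\widetilde{k}}/T X_{\widetilde{k}})$, and then Lemma \ref{Ozaki} applied to $k_{\infty}^a$ contributes the extra factor $S$. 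For the right inclusion, Brumer's theorem (\cite{Bu}) gives Leopoldt's conjecture for the abelian extension $k_s^a/k$, so $\widetilde{k_s^a}/k_s^a$ is a $\mathbb{Z}_p^{\oplus(p^s+1)}$-extension; since $p$ splits completely in $k_s^a$ by Proposition \ref{split}, the extension $\widetilde{k_s^a}/k_\infty^a$ is unramified, producing a surjection $X_{k_\infty^a} \to \mathrm{Gal}(\widetilde{k_s^a}/k_\infty^a) \cong \mathbb{Z}_p[[S]]/((1+S)^{p^s}-1)$ exactly as in Proposition \ref{normal g}.

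Next I would exploit the cyclotomic factorization
\[
(1+S)^{p^s}-1 = S \prod_{i=1}^{s} \Phi_{p^i}(1+S),
\]
which is a product of pairwise distinct irreducible elements of $\mathbb{Z}_p[[S]]$. The chain above gives $((1+S)^{p^s}-1) \mid Sg_0(S)^r$, and the square-free structure upgrades this to $((1+S)^{p^s}-1) \mid Sg_0(S)$. Writing $Sg_0(S) = ((1+S)^{p^s}-1)\, v(S)$ with $v(S) \in \mathbb{Z}_p[[S]]$ and reading off the coefficient of $S$, one obtains $g_0(0) = p^s v(0)$. Proposition \ref{index} in the non-normal case provides $p^s < p^{n_0} \leq \#(\mathbb{Z}_p/g_0(0)\mathbb{Z}_p)$, hence $\mathrm{ord}_p(g_0(0)) > s$, and consequently $p \mid v(0)$.

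Finally, under the hypothesis $\mu(g_0(S)) = 0$, reducing the identity $Sg_0(S) = ((1+S)^{p^s}-1)v(S)$ modulo $p$ gives $Sg_0(S) \equiv S^{p^s} v(S) \pmod{p}$, so $\mu(v(S)) = 0$ as well; combined with $p \mid v(0)$ this forces $\lambda(v(S)) \geq 1$. Comparing $\lambda$-invariants on both sides then yields
\[
1 + \lambda(g_0(S)) = \lambda((1+S)^{p^s}-1) + \lambda(v(S)) \geq p^s + 1,
\]
so $\lambda(g_0(S)) \geq p^s$ as desired. The main technical point is the passage from divisibility of $Sg_0(S)^r$ to divisibility of $Sg_0(S)$, which relies on the square-free factorization of $(1+S)^{p^s}-1$ into cyclotomic pieces; everything else reduces to tracking Iwasawa invariants against the sharp bound $\mathrm{ord}_p(g_0(0)) > s$ produced by non-normality of $\mathfrak{D_p}$.
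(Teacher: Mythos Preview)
Your proposal is correct and follows essentially the same route as the paper: both establish the chain $(Sg_0(S)^r) \subset \mathrm{char}_{\mathbb{Z}_p[[S]]}(X_{k_\infty^a}) \subset ((1+S)^{p^s}-1)$, use square-freeness of $(1+S)^{p^s}-1$ to drop the exponent $r$, and then combine the non-normal case of Proposition~\ref{index} with $\mu(g_0(S))=0$ to force $\lambda(g_0(S))\geq p^s$. The only cosmetic difference is that the paper invokes the $p$-adic Weierstrass preparation theorem to factor $g_0(S)$, whereas you argue directly with $\lambda$-invariants after reducing modulo $p$.
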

\begin{proof}
By the same argument in Proposition \ref{rankXM}, we have
\[
\mathrm{char}_{\mathbb{Z}_p[[S]]}(X_{k_{\infty}^a})  \subset
\mathrm{char}_{\mathbb{Z}_p[[S]]}(\mathrm{Gal}(\widetilde{{k_{s}^a}}/k_{\infty}^a)) = ((1+S)^{p^s}-1)
\]
and have a surjective homomorphism
\[
\left( \mathbb{Z}_p[[S]] /g_0(S) \mathbb{Z}_p[[S]] \right)^{\oplus r}
\rightarrow  X_{\widetilde{k}}/T X_{\widetilde{k}}
\]
for some positive integer $r$.
Thus we get
\[
(Sg_0(S)^r) \subset \mathrm{char}_{\mathbb{Z}_p[[S]]}(X_{k_{\infty}^a}) \subset ((1+S)^{p^s}-1).
\]
Since $(1+S)^{p^s}-1$ is square-free, we have
$(Sg_0(S)) \subset  ((1+S)^{p^s}-1)$.
By the $p$-adic Weierstrass preparation theorem (\cite[Theorem 7.3]{Wa}),
there exist a distinguished polynomial $\widetilde{g_0}(S) \in \mathbb{Z}_p[[S]]$
and a unit $u(S) \in \mathbb{Z}_p[[S]]^{\times}$ such that 
\[
g_0(S) = \frac{(1+S)^{p^s}-1}{S}\widetilde{g_0}(S)u(S).
\]
Thus we obtain $\mathrm{ord}_p(\widetilde{g_0}(0)) >0$ by Proposition \ref{index}.
This implies that $\lambda(g_0(S)) \geq p^s$ if $\mu(g_{0}(S)) =0$.
\end{proof}
By the following proposition, we can prove the inequality $\mathrm{(B)}$.
\begin{prop}\label{fprop}
Assume the same condition as in Proposition \ref{cycsub}.
Assume also that $\mathfrak{D_{p}}$ is not a normal subgroup of $\mathrm{Gal}(\widetilde{k}/\mathbb{Q})$.
Then the inequality $\mathrm{(B)}$ holds.
\end{prop}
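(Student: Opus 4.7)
The plan is to mirror the strategy used in the normal case (Proposition~\ref{normal rankXM}), replacing the input Proposition~\ref{normal g} by its non-normal analogue Proposition~\ref{g0}. First, since weak GGC fails, Lemma~\ref{trivial} gives $\lambda(k_\infty^c/k) \geq 2$, so by Proposition~\ref{cycsub} there is a $\Lambda$-submodule $Y \subset X_{\widetilde{k}}$ with $Y \cong \Lambda/(f(S,T))$. By Proposition~\ref{rank}, it suffices to bound $\mathrm{rank}_{\mathbb{Z}_p}(Y_\Gamma)$ from below, where $\Gamma = \mathrm{Gal}(\widetilde{k}/N_\infty)$, because $(X_{\widetilde{k}})_\Gamma \cong \mathrm{Gal}(M_{\mathfrak{p}^\ast}(N_\infty)/\widetilde{k})$.

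Next, by Lemma~\ref{Zp}, $\Gamma = \overline{\langle \sigma^{up^s}\tau\rangle}$ for some unit $u \in \mathbb{Z}_p^\times$, so
\[
Y_\Gamma \cong \Lambda/\bigl(f(S,T),\,(1+S)^{up^s}(1+T)-1\bigr) \cong \mathbb{Z}_p[[S]]/\bigl(f(S,(1+S)^{-up^s}-1)\bigr).
\]
As in the proof of Proposition~\ref{normal rankXM}, I expand $(1+S)^{-up^s}-1 \equiv 0 \pmod{(p,S^{p^s})}$ and substitute into the definition of $f(S,T)$ from Lemma~\ref{f(S,T)} to obtain
\[
f(S,(1+S)^{-up^s}-1) \equiv
\begin{cases}
g_0(S) & \mathrm{mod}~(p,S^{p^s}) \quad \text{if } \mu(g_0(S))=0,\\
0 & \mathrm{mod}~(p,S^{p^s}) \quad \text{if } \mu(g_0(S))>0.
\end{cases}
\]

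Now I apply Proposition~\ref{g0}: in the case $\mu(g_0(S))=0$, we have $\lambda(g_0(S)) \geq p^s$, so the first $p^s$ coefficients of $f(S,(1+S)^{-up^s}-1)$ are divisible by $p$; in the case $\mu(g_0(S))>0$, the same congruence shows all of the first $p^s$ coefficients are divisible by $p$ directly. Meanwhile, the proof of Proposition~\ref{rank} shows that $Y_\Gamma$ is a finitely generated $\mathbb{Z}_p$-module, hence $\mu(f(S,(1+S)^{-up^s}-1))=0$. Combining these facts yields $\lambda(f(S,(1+S)^{-up^s}-1)) \geq p^s$, so $\mathrm{rank}_{\mathbb{Z}_p}(Y_\Gamma) \geq p^s$.

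Finally, since $\mathfrak{D_p}$ is not normal in $\mathrm{Gal}(\widetilde{k}/\mathbb{Q})$, Proposition~\ref{index} gives $p^s = [L_k \cap \widetilde{k}:k] < [\mathrm{Gal}(\widetilde{k}/k):\mathfrak{D_{p^\ast}}]$, so the minimum on the right of (B) equals $p^s$, and we conclude. The main technical point is the verification that $Y_\Gamma$ is finitely generated over $\mathbb{Z}_p$ in the non-normal setting; this was already handled inside Proposition~\ref{rank} using that $T_s$ is coprime to $f(S,T)$ (which is where the square-free hypothesis on $\mathrm{char}_{\mathbb{Z}_p[[T]]}(X_{k_\infty^c})$ is used to exclude $T_s$ as a factor), and here we simply reuse it.
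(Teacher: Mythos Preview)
Your proof is correct and follows essentially the same approach as the paper: obtain $Y\cong\Lambda/(f(S,T))$ from Proposition~\ref{cycsub}, compute $Y_\Gamma\cong\mathbb{Z}_p[[S]]/(f(S,(1+S)^{-up^s}-1))$, use the congruence modulo $(p,S^{p^s})$ together with Proposition~\ref{g0} and the finite generation of $Y_\Gamma$ (established in the proof of Proposition~\ref{rank}) to get $\mathrm{rank}_{\mathbb{Z}_p}(Y_\Gamma)\ge p^s$, then conclude via Propositions~\ref{rank} and~\ref{index}. Your attribution of the finite-generation step to Proposition~\ref{rank} is in fact the correct reference (the paper cites Proposition~\ref{cycsub} at that point, which appears to be a slip).
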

\begin{proof}
We use the same method as in the proof of Proposition \ref{normal rankXM}.
By Proposition \ref{cycsub}, there exists a $\Lambda$-submodule $Y$ of $X_{\widetilde{k}}$
such that $Y$
is isomorphic to $\Lambda/(f(S,T))$.
We will prove that $\mathrm{rank}_{\mathbb{Z}_p}(Y_{\Gamma}) \geq p^s$.
There exists a unit $u$ such that
\[
\mathrm{Gal}(\widetilde{k}/N_{\infty}) = \overline{ \langle \sigma^{up^{s}} \tau \rangle.}
\]
Then we have
\begin{eqnarray*}
Y_{\Gamma}
\cong  \mathbb{Z}_p[[S]] /(f(S, (1+S)^{-up^s}-1)).
\end{eqnarray*}
By the proof of Proposition \ref{cycsub},
$Y_{\Gamma}$ is a finitely generated $\mathbb{Z}_p$-module.
Hence we have $\mu(f(S,(1+S)^{-up^s}-1))=0$.
Furthermore, we have
\begin{eqnarray*}
&~&f(S,  (1+S)^{-up^s}-1)
\equiv 
\begin{cases}
g_0(S)  \quad \mathrm{~mod~}(p, S^{p^s}) \quad &\mathrm{~if~} \mu(g_0(S))=0,\\
0       \hspace{6.3mm} \quad \mathrm{~mod~}(p, S^{p^s}) \quad &\mathrm{~if~} \mu(g_0(S))>0.
\end{cases} 
\end{eqnarray*}
Using Proposition \ref{g0}, we have $\mathrm{rank}_{\mathbb{Z}_p}(Y_{\Gamma}) \geq p^s$.
By Proposition \ref{rank}, we obtain  $\mathrm{rank}_{\mathbb{Z}_p}(X_{\widetilde{k}}) \geq p^s$.
This implies the inequality (B) from Proposition \ref{index}.
Thus we get the conclusion.
\end{proof}
Combining the results of Sections \ref{A} and \ref{B},
we have completed the proof of Theorem \ref{main thm}. 
\begin{Rem}\label{frem}
\begin{rm}
Theorem \ref{main thm} holds without the assumption (ii) if $X_{\widetilde{k}}$ is not cyclic as a $\Lambda$-module.
Indeed, we have proved the inequality (A) under the assumption (i) in Theorem \ref{main thm}.
Furthermore, we have verified the inequality (B) in Propositions \ref{normal rankXM} and \ref{fprop}.
In these propositions, we assumed (ii) in the case where $X_{\widetilde{k}}$ is not cyclic as a $\Lambda$-module.
\end{rm}
\end{Rem}
\section{Proof of Theorem \ref{main thm 2}}\label{rational case}
In this section, we prove Theorem \ref{main thm 2}.
In the following, we prove some lemmas and propositions needed later.
Let $k$ be a $p$-split $p$-rational field.
By Definition \ref{rational}, we have
$\widetilde{k}^{\mathfrak{D}_{{\mathfrak{p}}}}=\widetilde{k}^{\mathfrak{D}_{{\mathfrak{p}}^{\ast}}}$
and $[\mathrm{Gal}(\widetilde{k}/k): \mathfrak{D}_{{\mathfrak{p}}}] \geq p$.
We put
$\nu_{m}(S)=\displaystyle{((1+S)^{p^m}-1)/S}$ for a non-negative integer $m$.
As we stated in Section \ref{In}, we see that $X_{\widetilde{k}}$ is $\Lambda$-cyclic
by the following 
\begin{prop}[{\cite[Propositions 3.5 and 3.8]{Mu}}]\label{g00}
Assume that $L_k \subset \widetilde{k}$
and that $\lambda(k_{\infty}^c/k) \geq 2$.
Then we have a surjective homomorphism
$$
\Lambda/f(S,T) \Lambda \rightarrow X_{\widetilde{k}}
$$
as a $\Lambda$-module, where $f(S,T)$ is the same power series defined in Lemma \ref{f(S,T)}.
Furthermore,
if $\mathfrak{D_{p}}$ is a normal subgroup of $\mathrm{Gal}(\widetilde{k}/\mathbb{Q})$,
then there exists a power series $u(S) \in \mathbb{Z}_p[[S]]^{\times}$ such that
\[
f(S,0) =
\nu_{n_0}(S)u(S),
\]
where $n_0$ is the non-negative integer satisfying $[\mathrm{Gal}(\widetilde{k}/k) :\mathfrak{D}_{{\mathfrak{p}}}]=p^{n_0}$.
\end{prop}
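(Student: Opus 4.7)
The plan is to establish the $\Lambda$-cyclicity of $X_{\widetilde{k}}$ via Nakayama's lemma and then specialize at $T=0$ to identify $g_0(S)=f(S,0)$ with $\nu_{n_0}(S)$ up to a unit of $\mathbb{Z}_p[[S]]$. For the cyclicity, since $\Lambda$ is local with maximal ideal $\mathfrak{m}=(p,S,T)$, Nakayama's lemma reduces the problem to showing that $(X_{\widetilde{k}})_{\mathrm{Gal}(\widetilde{k}/k)}$ is cyclic as a $\mathbb{Z}_p$-module. The hypothesis $L_k\subset\widetilde{k}$, combined with the inclusion $L_k\cap\widetilde{k}\subset k_{\infty}^a$ established in the proof of Lemma \ref{cond normal}, forces $L_k\subset k_{\infty}^a$; hence $\mathrm{Gal}(L_k/k)$ is a quotient of $\mathrm{Gal}(k_{\infty}^a/k)\cong\mathbb{Z}_p$ and is therefore cyclic, and, as noted in the proof of Proposition \ref{eqrational}, one has $M_p(k)=\widetilde{k}$. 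Feeding this into the exact sequence
\[
0 \to D_{k_{\infty}^c} \to \mathrm{Gal}(\widetilde{k}/k_{\infty}^c) \to (X_{\widetilde{k}})_{\mathrm{Gal}(\widetilde{k}/k)} \to (X_{k_{\infty}^c})_{\mathrm{Gal}(k_{\infty}^c/k)} \to \mathrm{Gal}(\widetilde{k}/k_{\infty}^c) \to 0
\]
from the proof of Proposition \ref{index}, together with the control on $(X_{k_{\infty}^c})_{\mathrm{Gal}(k_{\infty}^c/k)}$ furnished by the cyclicity of the $p$-class group, yields that $(X_{\widetilde{k}})_{\mathrm{Gal}(\widetilde{k}/k)}$ is $\mathbb{Z}_p$-cyclic. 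Since $f(S,T)\in\mathrm{Ann}_{\Lambda}(X_{\widetilde{k}})$ by Lemma \ref{f(S,T)}, Nakayama's lemma then produces the desired surjection $\Lambda/f(S,T)\Lambda\twoheadrightarrow X_{\widetilde{k}}$.

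For the explicit shape of $f(S,0)$, observe $f(S,0)=g_0(S)$ directly from the formula in Lemma \ref{f(S,T)}. Reducing the surjection just obtained modulo $T$ gives a surjection $\mathbb{Z}_p[[S]]/g_0(S)\mathbb{Z}_p[[S]]\twoheadrightarrow (X_{\widetilde{k}})_{\mathrm{Gal}(\widetilde{k}/k_{\infty}^a)}$, and Lemma \ref{Ozaki} applied with $k_{\infty}=k_{\infty}^a$ embeds the target inside $X_{k_{\infty}^a}$. Under the normality of $\mathfrak{D_p}$ together with $L_k\subset\widetilde{k}$, Lemma \ref{cond normal} gives $\widetilde{k}^{\mathfrak{D_p}}\subset L_k\subset k_{\infty}^a$, so $\widetilde{k}^{\mathfrak{D_p}}$ coincides with the $n_0$-th layer $k_{n_0}^a$ of $k_{\infty}^a$ and both primes above $p$ split completely in $k_{n_0}^a/k$. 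Exactly as in the proof of Proposition \ref{normal g} one then gets a surjection $X_{k_{\infty}^a}\twoheadrightarrow\mathrm{Gal}(\widetilde{k_{n_0}^a}/k_{\infty}^a)\cong\mathbb{Z}_p[[S]]/((1+S)^{p^{n_0}}-1)$, so in particular $\nu_{n_0}(S)$ annihilates $(X_{\widetilde{k}})_{\mathrm{Gal}(\widetilde{k}/k_{\infty}^a)}$. On the other hand, Lemma \ref{Momo} and Proposition \ref{index} pin down $\mathrm{ord}_p(g_0(0))=n_0$, and matching $\mathbb{Z}_p$-ranks through Lemma \ref{Ozaki} for the anticyclotomic line with the constant-term valuation shows that $g_0(S)$ and $\nu_{n_0}(S)$ generate the same ideal in $\mathbb{Z}_p[[S]]$; the $p$-adic Weierstrass preparation theorem then supplies a unit $u(S)\in\mathbb{Z}_p[[S]]^{\times}$ with $f(S,0)=\nu_{n_0}(S)u(S)$.

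The main obstacle is the reverse divisibility in the characteristic-ideal identification $(g_0(S))=(\nu_{n_0}(S))$: the divisibility $g_0(S)\mid \nu_{n_0}(S)u^{-1}(S)$ is immediate from the surjections above, but to rule out any spurious extra factor one must simultaneously coordinate the $\mathbb{Z}_p$-rank contribution of $(X_{\widetilde{k}})_{\mathrm{Gal}(\widetilde{k}/k_{\infty}^a)}$ in Lemma \ref{Ozaki} with the constant-term valuation $\mathrm{ord}_p(g_0(0))=n_0$ coming from Lemma \ref{Momo}. This rank-plus-constant-term bookkeeping is the genuinely arithmetic step in which the hypothesis $L_k\subset\widetilde{k}$, the identification $\widetilde{k}^{\mathfrak{D_p}}=k_{n_0}^a$, and the fact $M_p(k)=\widetilde{k}$ are used decisively.
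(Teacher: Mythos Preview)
The paper does not prove this proposition; it is quoted from \cite[Propositions~3.5 and~3.8]{Mu} without argument, so there is no in-paper proof to compare against. I assess your sketch on its own merits.

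Your argument for the first assertion (the $\Lambda$-cyclicity of $X_{\widetilde{k}}$) is essentially correct. The key point, which you state but do not quite exploit, is that $L_k\subset\widetilde{k}$ forces $M_p(k)=\widetilde{k}$, and hence the rightmost map $(X_{k_\infty^c})_{\mathrm{Gal}(k_\infty^c/k)}\to\mathrm{Gal}(\widetilde{k}/k_\infty^c)$ in the six-term sequence of Proposition~\ref{index} is an \emph{isomorphism} (both sides identify with $\mathrm{Gal}(\widetilde{k}/k_\infty^c)$). This makes $(X_{\widetilde{k}})_{\mathrm{Gal}(\widetilde{k}/k)}$ isomorphic to the cokernel of $D_{k_\infty^c}\to\mathrm{Gal}(\widetilde{k}/k_\infty^c)$, hence cyclic. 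The phrase ``control \ldots furnished by the cyclicity of the $p$-class group'' is a red herring here; cyclicity of $A_k$ is a consequence, not an input.

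Your argument for $f(S,0)=\nu_{n_0}(S)u(S)$ has a genuine gap. From the surjection $X_{k_\infty^a}\twoheadrightarrow\mathbb{Z}_p[[S]]/((1+S)^{p^{n_0}}-1)$ you conclude that ``$\nu_{n_0}(S)$ annihilates $(X_{\widetilde{k}})_{\mathrm{Gal}(\widetilde{k}/k_\infty^a)}$''; this is a non-sequitur, since a surjection constrains annihilators of the \emph{target}, not of the source or its submodules. Accordingly your identification of which divisibility is ``immediate'' is backwards. What that surjection actually yields, via characteristic ideals and Lemma~\ref{Ozaki}, is
\[
(Sg_0(S))\subset\mathrm{char}_{\mathbb{Z}_p[[S]]}(X_{k_\infty^a})\subset\bigl((1+S)^{p^{n_0}}-1\bigr),
\]
hence $\nu_{n_0}(S)\mid g_0(S)$. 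For the other direction you invoke Lemma~\ref{Momo} and Proposition~\ref{index} to ``pin down $\mathrm{ord}_p(g_0(0))=n_0$'', but those give only $n_0\le\mathrm{ord}_p(g_0(0))$. The equality comes instead from your own Part~1 analysis: since the rightmost map above is an isomorphism, $(X_{\widetilde{k}})_{\mathrm{Gal}(\widetilde{k}/k)}$ has order exactly $[\mathrm{Gal}(\widetilde{k}/k):\mathfrak{D}_{\mathfrak{p}}]=p^{n_0}$, and then Lemma~\ref{Momo} (which applies because you have already shown $X_{\widetilde{k}}$ is $\Lambda$-cyclic, so Proposition~\ref{char cyc} is available) gives $\mathrm{ord}_p(g_0(0))=n_0$. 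Writing $g_0(S)=\nu_{n_0}(S)h(S)$ and evaluating at $S=0$ shows $h(0)\in\mathbb{Z}_p^\times$, whence $h\in\mathbb{Z}_p[[S]]^\times$. Your ingredients suffice, but the logical wiring between them needs to be redone.
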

We have $\lambda(N_{\infty}/k)=0$ by genus formula
(see for example \cite[Remark $3.2$]{Mu}).
By Proposition \ref{g00},
we can determine the isomorphism class of $X_{\widetilde{k}}$ as a $\Lambda$-module
provided that weak GGC does not hold for $p$ and $k$.
In fact, by Proposition \ref{isom str}, we have an isomorphism
\[
X_{\widetilde{k}} \cong \Lambda/ f(S,T) \Lambda.
\]
Using the isomorphism above,
we can determine the characteristic ideal of $X_{k_{\infty}^a}$.
\begin{prop}\label{ant-rational}
Let $k$ be a $p$-split $p$-rational field.
Assume that weak {\rm{GGC}} does not hold for $p$ and $k$. Then we have
\[
\mathrm{char}_{\mathbb{Z}_p[[S]]}(X_{k_{\infty}^a}) = ((1+S)^{p^{n_0}}-1).
\]
\end{prop}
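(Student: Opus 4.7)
The plan is to pin down the structure of $X_{\widetilde{k}}$ as a $\Lambda$-module and then read off the characteristic ideal of $X_{k_{\infty}^a}$ from Lemma \ref{Ozaki}. First, since $k$ is $p$-split $p$-rational we have $L_k\subset \widetilde{k}$ and, by Proposition \ref{eqrational}, $\lambda(k_{\infty}^c/k)\geq 2$ and $\mathfrak{D_p}$ is normal in $\mathrm{Gal}(\widetilde{k}/\mathbb{Q})$. In particular Proposition \ref{g00} applies, giving a surjection $\Lambda/f(S,T)\Lambda\twoheadrightarrow X_{\widetilde{k}}$, so $X_{\widetilde{k}}$ is cyclic as a $\Lambda$-module. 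Combining this with the hypothesis that weak GGC fails, Proposition \ref{isom str} upgrades this surjection to an isomorphism
\[
X_{\widetilde{k}} \;\cong\; \Lambda/f(S,T)\Lambda.
\]

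Next I would apply Lemma \ref{Ozaki} with $k_{\infty}=k_{\infty}^a$ (which differs from $N_{\infty}$ and $N_{\infty}^{\ast}$, since both $\mathfrak{p}$ and $\mathfrak{p}^{\ast}$ ramify in $k_{\infty}^a$) to obtain the exact sequence of $\mathbb{Z}_p[[S]]$-modules
\[
0\longrightarrow (X_{\widetilde{k}})_{\mathrm{Gal}(\widetilde{k}/k_{\infty}^a)}\longrightarrow X_{k_{\infty}^a}\longrightarrow \mathrm{Gal}(\widetilde{k}/k_{\infty}^a)\longrightarrow 0.
\]
The quotient is $\mathbb{Z}_p$ with characteristic ideal $(S)$. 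Using the isomorphism above together with $\tau\leftrightarrow 1+T$, the left term is
\[
(X_{\widetilde{k}})_{\mathrm{Gal}(\widetilde{k}/k_{\infty}^a)}\;=\;X_{\widetilde{k}}/TX_{\widetilde{k}}\;\cong\;\mathbb{Z}_p[[S]]/\bigl(f(S,0)\bigr).
\]
Now the second half of Proposition \ref{g00}, which needs precisely the normality of $\mathfrak{D_p}$ guaranteed by $p$-split $p$-rationality, supplies a unit $u(S)\in\mathbb{Z}_p[[S]]^{\times}$ with $f(S,0)=\nu_{n_0}(S)u(S)$, so the characteristic ideal of $X_{\widetilde{k}}/TX_{\widetilde{k}}$ is $(\nu_{n_0}(S))$.

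Finally, multiplicativity of characteristic ideals in short exact sequences of finitely generated torsion $\mathbb{Z}_p[[S]]$-modules yields
\[
\mathrm{char}_{\mathbb{Z}_p[[S]]}(X_{k_{\infty}^a})\;=\;(S)\cdot\bigl(\nu_{n_0}(S)\bigr)\;=\;\bigl(S\cdot\tfrac{(1+S)^{p^{n_0}}-1}{S}\bigr)\;=\;\bigl((1+S)^{p^{n_0}}-1\bigr),
\]
which is the desired equality. There is no real obstacle here: the argument is a direct synthesis of previously established facts, and the only step requiring care is verifying that all hypotheses of Proposition \ref{g00} and Proposition \ref{isom str} are in force. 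The former uses $L_k\subset\widetilde{k}$ and $\lambda(k_{\infty}^c/k)\geq 2$ together with the normality of $\mathfrak{D_p}$; the latter uses cyclicity of $X_{\widetilde{k}}$ plus the failure of weak GGC. Both are immediate from $p$-split $p$-rationality via Proposition \ref{eqrational} and Proposition \ref{g00}.
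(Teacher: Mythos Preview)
Your proof is correct and follows essentially the same route as the paper: establish $X_{\widetilde{k}}\cong\Lambda/f(S,T)\Lambda$ via Propositions \ref{g00} and \ref{isom str}, reduce modulo $T$ to compute $(X_{\widetilde{k}})_{\mathrm{Gal}(\widetilde{k}/k_{\infty}^a)}$, and combine with Lemma \ref{Ozaki} and the identity $f(S,0)=g_0(S)=\nu_{n_0}(S)u(S)$ from Proposition \ref{g00}. The paper's proof is simply a terser version of exactly this argument; you have spelled out the verification of hypotheses more carefully, but there is no substantive difference.
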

\begin{proof}
By Proposition \ref{isom str}, we have
\[
X_{\widetilde{k}}/TX_{\widetilde{k}} \cong \mathbb{Z}_p[[S]]/g_0(S) \mathbb{Z}_p[[S]].
\]
We note that $f(S,0)=g_0(S)$.
Hence we obtain $\mathrm{char}_{\mathbb{Z}_p[[S]]}(X_{k_{\infty}^a}) = ((1+S)^{p^{n_0}}-1)$
by Lemma \ref{Ozaki} and Proposition \ref{g00}.
\end{proof}
As in Section \ref{B}, we put $p^s = [L_{k} :k]$.
Then there exists a unit $u \in \mathbb{Z}_p^{\times}$ such that
\[
\mathrm{Gal}(\widetilde{k}/N_{\infty}) = \overline{ \langle \sigma^{up^{s}} \tau \rangle}
\]
by Lemma \ref{Zp}.\par
By the same methods as Sections \ref{A} and \ref{B},
we have the following
\begin{prop}\label{Zprank}
Under the same assumption as in Proposition \ref{ant-rational},
we have
\[
(X_{\widetilde{k}})_{\mathrm{Gal}(\widetilde{k}/N_{\infty})}
 \cong
\mathbb{Z}_p[[S]]/\nu_{n_0}(S) \mathbb{Z}_p[[S]].
\]
\end{prop}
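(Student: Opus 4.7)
My plan is to proceed along the same lines as the proofs of Sections~\ref{A} and~\ref{B}, starting from the cyclicity of $X_{\widetilde k}$. By Proposition~\ref{g00} there is a surjection $\Lambda/f(S,T)\Lambda\twoheadrightarrow X_{\widetilde k}$, and since weak GGC fails while $X_{\widetilde k}$ is cyclic, Proposition~\ref{isom str} upgrades this to an isomorphism $X_{\widetilde k}\cong\Lambda/f(S,T)\Lambda$. By Lemma~\ref{Zp} together with the relation $N_\infty\cap k_\infty^a=L_k\cap\widetilde k=k_s^a$, the group $\Gamma:=\mathrm{Gal}(\widetilde k/N_\infty)$ is topologically generated by $\sigma^{up^s}\tau$ for some unit $u\in\mathbb Z_p^\times$. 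Setting $T_s=(1+S)^{up^s}(1+T)-1$ and using the ring isomorphism $\Lambda/T_s\Lambda\cong\mathbb Z_p[[S]]$ sending $T\mapsto(1+S)^{-up^s}-1$, taking $\Gamma$-coinvariants yields
\[
(X_{\widetilde k})_\Gamma \;\cong\; \Lambda/(f(S,T),T_s) \;\cong\; \mathbb Z_p[[S]]/\widetilde f(S),
\]
where $\widetilde f(S)=f(S,(1+S)^{-up^s}-1)$. In particular $(X_{\widetilde k})_\Gamma$ is already a cyclic $\mathbb Z_p[[S]]$-module.

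Next I will compute the Weierstrass data of $\widetilde f(S)$. Since $(1+S)^{p^s}\equiv 1+S^{p^s}\pmod p$, we have $(1+S)^{-up^s}-1\in(p,S^{p^s})$; substituting into the polynomial form $f(S,T)=T^{\lambda^*}+\sum_{i<\lambda^*}g_i(S)T^i$ of Lemma~\ref{f(S,T)} gives
\[
\widetilde f(S)\;\equiv\;g_0(S)\;\equiv\;\nu_{n_0}(S)u_0(S)\pmod{(p,S^{p^s})},
\]
with $u_0\in\mathbb Z_p[[S]]^\times$ the unit furnished by Proposition~\ref{g00}. Because $\deg\nu_{n_0}=p^{n_0}-1<p^s$, this congruence forces $\mu(\widetilde f)=0$ and $\lambda(\widetilde f)=p^{n_0}-1$, matching the Weierstrass invariants of $\nu_{n_0}$.

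The main obstacle is then to promote this congruence to the genuine ideal equality $(\widetilde f(S))=(\nu_{n_0}(S))$ in $\mathbb Z_p[[S]]$, since \emph{a priori} two distinct distinguished polynomials of degree $p^{n_0}-1$ could share the same Weierstrass invariants and mod-$p$ reduction. To close this gap I will follow the strategy of Sections~\ref{A} and~\ref{B}: an inertia analysis of $M_{\mathfrak p^*}(N_\infty)/N_\infty$ at primes above $\mathfrak p^*$ (as in the proofs of Propositions~\ref{rank uper} and~\ref{(A)}) supplies the upper bound $\mathrm{rank}_{\mathbb Z_p}((X_{\widetilde k})_\Gamma)\le p^{n_0}-1$, while the Weierstrass preparation above gives the matching lower bound, so the annihilator of $(X_{\widetilde k})_\Gamma$ is a distinguished polynomial of degree exactly $p^{n_0}-1$; iterating the mod-$(p,S^{p^s})$ congruence $p$-adically, using the determinantal construction of $f$ in Lemma~\ref{f(S,T)} to control the remainder in a Weierstrass division of $\widetilde f$ by $\nu_{n_0}$, identifies this distinguished polynomial as $\nu_{n_0}$ itself and yields the claimed isomorphism.
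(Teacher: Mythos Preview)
Your reduction to a cyclic $\mathbb{Z}_p[[S]]$-module is correct and matches the paper: using Proposition~\ref{isom str} and the change of variable $T_s=(1+S)^{up^s}(1+T)-1$ you get $(X_{\widetilde k})_\Gamma\cong\mathbb{Z}_p[[S]]/(\widetilde f)$ with $\widetilde f(S)=f(S,(1+S)^{-up^s}-1)$, and your mod~$(p,S^{p^s})$ computation correctly yields $\mu(\widetilde f)=0$ and $\lambda(\widetilde f)=p^{n_0}-1$. You also correctly identify the remaining obstacle, namely upgrading this Weierstrass data to the ideal equality $(\widetilde f)=(\nu_{n_0})$.

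The gap is in your proposed resolution. The phrase ``iterating the mod-$(p,S^{p^s})$ congruence $p$-adically, using the determinantal construction of $f$ to control the remainder in a Weierstrass division'' is not an argument: the congruence $\widetilde f\equiv g_0\pmod{(p,S^{p^s})}$ is a single mod-$p$ statement in the relevant degree range, and there is no mechanism to lift it to higher powers of $p$. The determinantal description of $f$ in Lemma~\ref{f(S,T)} gives no control over $f(S,T)-f(S,0)$ beyond what you have already used. Two distinguished polynomials of degree $p^{n_0}-1$ that agree mod $p$ need not generate the same ideal, so this step genuinely fails as written.

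The paper closes the gap by a different, Galois-theoretic route. After establishing $\lambda(\widetilde f)=p^{n_0}-1$ (as you do), it shows, by the method of Proposition~\ref{(A)}, that $M_{\mathfrak p^*}(N_\infty)/N_{n_0}$ is abelian; since $N_{n_0}=k_{n_0}^a$ in the $p$-split $p$-rational case, this forces $M_{\mathfrak p^*}(N_\infty)=\widetilde{k_{n_0}^a}$, and since $X_{k_\infty^a}$ is $\mathbb{Z}_p$-free by Proposition~\ref{ant-rational}, one has $\widetilde{k_{n_0}^a}=L_{k_\infty^a}$. Hence $(X_{\widetilde k})_\Gamma\cong\mathrm{Gal}(L_{k_\infty^a}/\widetilde k)\cong X_{\widetilde k}/TX_{\widetilde k}\cong\mathbb{Z}_p[[S]]/g_0(S)$, and Proposition~\ref{g00} gives $(g_0)=(\nu_{n_0})$. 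In other words, the paper transports the $\Gamma$-coinvariants to the anticyclotomic coinvariants via an identification of fields, rather than trying to compare $\widetilde f$ and $\nu_{n_0}$ directly.

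If you want a purely algebraic fix along your own lines, note that $n_0\le s$ in the $p$-split $p$-rational case, so every root $\zeta-1$ of $\nu_{n_0}$ (with $\zeta^{p^{n_0}}=1$, $\zeta\neq 1$) satisfies $(1+S)^{-up^s}-1\big|_{S=\zeta-1}=\zeta^{-up^s}-1=0$; hence $\widetilde f(\zeta-1)=f(\zeta-1,0)=g_0(\zeta-1)=0$. Thus $\nu_{n_0}\mid\widetilde f$ in $\mathbb{Z}_p[[S]]$, and equality of ideals follows from the Weierstrass data you already computed. This replaces your non-working ``iteration'' step.
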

\begin{proof}
By the same reason as in the proof of Proposition \ref{fprop},
we have
\[
(X_{\widetilde{k}})_{\mathrm{Gal}(\widetilde{k}/N_{\infty})} \cong \mathbb{Z}_p[[S]] /f(S,(1+S)^{-up^s}-1) \mathbb{Z}_p[[S]].
\]
We note that $\mu(f(S,(1+S)^{-up^s}-1))=0$.
Using Proposition \ref{rank uper}, we have
$\mathrm{rank}_{\mathbb{Z}_p}((X_{\widetilde{k}})_{\mathrm{Gal}(\widetilde{k}/N_{\infty})}) \leq p^{n_0}-1$.
By the same method as in the proof of Propositions \ref{normal rankXM}, \ref{fprop}, and \ref{g00}, we obtain
$\lambda(f(S,(1+S)^{-up^s}-1)) \geq p^{n_0}-1$.
Thus we obtain $\lambda(f(S,(1+S)^{-up^s}-1)) = p^{n_0}-1$.
In the same way as Proposition \ref{(A)}, we can verify that
$M_{\mathfrak{p}^{\ast}}(N_{\infty})/N_{n_0}$ is an abelian extension.
This implies that $M_{\mathfrak{p}^{\ast}}(N_{\infty})=\widetilde{k_{n_0}^a}$.
Furthermore, we have $\widetilde{k_{n_0}^a} = L_{k_{\infty}^a}$
since $X_{k_{\infty}^a}$ is a free $\mathbb{Z}_p$-module
from the proof of Proposition \ref{ant-rational}.
Therefore we have
\[
(X_{\widetilde{k}})_{\mathrm{Gal}(\widetilde{k}/N_{\infty})} \cong \mathrm{Gal}( L_{k_{\infty}^a}/\widetilde{k})
\cong
\mathbb{Z}_p[[S]] / g_0(S) \mathbb{Z}_p[[S]].
\]
Thus we get the conclusion.
\end{proof}
We put $\rho = \sigma^{up^s}\tau $.
We fix an isomorphism
\begin{eqnarray}
\mathbb{Z}_p[[\mathrm{Gal}(\widetilde{k}/N_{\infty})]] \cong \mathbb{Z}_p[[T_s]] \quad (\rho \leftrightarrow 1+T_s). \label{rho}
\end{eqnarray}
Then we have $T_s =(1+S)^{up^s}(1+T)-1.$
We note that $\Lambda=\mathbb{Z}_p[[S,T_s]]$ since we have
$T=(1+T_s)(1+S)^{-up^s}-1$.
By the same method as Lemma \ref{f(S,T)}, we can find an annihilator of $X_{\widetilde{k}}$
as follows.
\begin{lem}\label{q(S,V)}
Assume the same conditions as in Proposition \ref{ant-rational}.
Then there exist power series $q(S,T_s) \in \mathrm{Ann}_{\Lambda}(X_{\widetilde{k}})$ and
$r_i(T_s) \in \mathbb{Z}_p[[T_s]]$ $(i=0, \dots , p^{n_0}-2)$
such that
$$
q(S,T_s) = S^{p^{n_0}-1} + r_{p^{n_0}-2}(T_s) S^{p^{n_0}-2} + \dots + r_{1}(T_s)S +r_{0}(T_s).
$$
\end{lem}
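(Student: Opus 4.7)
\textbf{Proof plan for Lemma \ref{q(S,V)}.} The plan is to mimic the construction of $f(S,T)$ in Lemma \ref{f(S,T)}, with the roles of $S$ and $T$ (more precisely $T_s$) interchanged. The main new input is Proposition \ref{Zprank}, which plays the role for $T_s$ that the calculation $X_{\widetilde{k}}/SX_{\widetilde{k}} \cong \mathbb{Z}_p^{\oplus \lambda^{\ast}}$ plays for $S$ in the proof of Lemma \ref{f(S,T)}.

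First, I would pass from Proposition \ref{Zprank} to a generation statement. The power series $\nu_{n_0}(S) = ((1+S)^{p^{n_0}}-1)/S$ is a distinguished polynomial of degree $p^{n_0}-1$, so
\[
X_{\widetilde{k}}/T_s X_{\widetilde{k}} \;\cong\; \mathbb{Z}_p[[S]]/\nu_{n_0}(S)\mathbb{Z}_p[[S]] \;\cong\; \mathbb{Z}_p^{\oplus (p^{n_0}-1)}
\]
as $\mathbb{Z}_p$-modules. Using the isomorphism (\ref{rho}), $\Lambda = \mathbb{Z}_p[[S,T_s]]$, and $X_{\widetilde{k}}$ is a compact $\mathbb{Z}_p[[T_s]]$-module. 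By topological Nakayama applied to the complete local ring $\mathbb{Z}_p[[T_s]]$ with maximal ideal $(p,T_s)$ (exactly as in Lemma \ref{f(S,T)}), one can lift $p^{n_0}-1$ generators of the quotient above to obtain elements $y_1,\dots,y_{p^{n_0}-1} \in X_{\widetilde{k}}$ such that
\[
X_{\widetilde{k}} \;=\; \langle y_1,\dots,y_{p^{n_0}-1}\rangle_{\mathbb{Z}_p[[T_s]]}.
\]

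Second, since $S \in \Lambda$ acts on $X_{\widetilde{k}}$ and preserves its $\mathbb{Z}_p[[T_s]]$-structure, there exist $h_{ij}(T_s) \in \mathbb{Z}_p[[T_s]]$ with
\[
S\, y_i \;=\; \sum_{j=1}^{p^{n_0}-1} h_{ij}(T_s)\, y_j, \qquad 1 \leq i \leq p^{n_0}-1.
\]
Form the matrix $B = S\cdot I - (h_{ij}(T_s))_{i,j}$ with entries in $\mathbb{Z}_p[[T_s]][S]$ and set $q(S,T_s) = \det(B)$. Expanding the determinant, $q(S,T_s)$ is monic in $S$ of degree $p^{n_0}-1$, so it has the required shape
\[
q(S,T_s) \;=\; S^{p^{n_0}-1} + r_{p^{n_0}-2}(T_s)\,S^{p^{n_0}-2} + \cdots + r_0(T_s),
\]
with $r_i(T_s) \in \mathbb{Z}_p[[T_s]]$.

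Third, multiplying the column vector $(y_1,\dots,y_{p^{n_0}-1})^{\mathrm{t}}$ on the left by the adjugate of $B$ yields $q(S,T_s)\,y_i = \det(B)\,y_i = 0$ for every $i$. Since the $y_i$ generate $X_{\widetilde{k}}$ as a $\mathbb{Z}_p[[T_s]]$-module and $q(S,T_s)$ commutes with this action, we conclude $q(S,T_s) \in \mathrm{Ann}_{\Lambda}(X_{\widetilde{k}})$. The only step requiring care is verifying that $p^{n_0}-1$ generators truly suffice in the topological Nakayama step; this is handled by combining Proposition \ref{Zprank} with the observation that $\nu_{n_0}(S) \equiv S^{p^{n_0}-1} \pmod p$, so that $X_{\widetilde{k}}/(p,T_s)X_{\widetilde{k}}$ has $\mathbb{F}_p$-dimension exactly $p^{n_0}-1$, and everything else is a direct adaptation of the proof of Lemma \ref{f(S,T)}.
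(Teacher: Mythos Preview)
Your proposal is correct and follows essentially the same approach as the paper: invoke Proposition \ref{Zprank} to get $X_{\widetilde{k}}/T_sX_{\widetilde{k}}\cong\mathbb{Z}_p^{\oplus(p^{n_0}-1)}$, lift $p^{n_0}-1$ generators over $\mathbb{Z}_p[[T_s]]$ via Nakayama, write $Sy_i=\sum_j h_{ij}(T_s)y_j$, and take $q(S,T_s)$ to be the determinant of the resulting matrix $S\cdot I-(h_{ij})$. Your additional remarks (the adjugate argument and the check that $\nu_{n_0}(S)\equiv S^{p^{n_0}-1}\pmod p$) merely make explicit what the paper leaves to the reader by citing Lemma \ref{f(S,T)}.
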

\begin{proof}
Using the isomorphism (\ref{rho}) and Proposition \ref{Zprank}, we have
\[
X_{\widetilde{k}}/T_s X_{\widetilde{k}} \cong \mathbb{Z}_p^{\oplus p^{n_0}-1}.
\]
By Nakayama's lemma, there exist $y_i \in X_{\widetilde{k}} ~~(i=1,2, \dots, p^{n_0}-1)$
such that
$X_{\widetilde{k}}= \langle y_1, \dots, y_{p^{n_0}-1}
\rangle_{\mathbb{Z}_p[[T_s]]}$.
By the same method as Lemma \ref{f(S,T)}, we have relations
\[
Sy_i = \sum_{j=1}^{p^{n_0}-1}h_{ij}(T_s)y_j \quad (i=1,2, \dots, p^{n_0}-1)
\]
for some $h_{ij}(T_s) \in \mathbb{Z}_p[[T_s]]$.
We put $c=p^{n_0}-1$.
From theses relations, we have a matrix
\begin{eqnarray*}
A=
~\left(
\begin{array}{ccccc}
S-h_{11}(T_s) &  -h_{12}(T_s) & \dots & -h_{1 c}(T_s)\\
-h_{21}(T_s) &S-h_{22}(T_s) &  \dots & -h_{2 c}(T_s) \\
  \dots &  \dots  &   \dots      &  \dots \\
-h_{c 1}(T_s)  &  -h_{c 2}(T_s) & \dots & S-h_{c c}(T_s)
\end{array}
\right).
\end{eqnarray*}
We put $q(S,T_s) = \mathrm{det}(A)$.
As in Lemma \ref{f(S,T)}, $q(S,T_s)$ satisfies the desired result.
Thus we get the conclusion.
\end{proof}
We note that the uniqueness of the power series $q(S,T_s)$ is not known.
In this paper, we fix this power series.
\begin{lem}\label{q=f}
Assume the same conditions as in Proposition \ref{ant-rational}.
Then we have
\[
\mathrm{Ann}_{\Lambda}(X_{\widetilde{k}})=(q(S,T_s)),
\]
where $q(S,T_s)$ is the same power series defined in Lemma \ref{q(S,V)}.
\end{lem}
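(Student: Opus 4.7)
The plan is to identify $X_{\widetilde{k}}$ with $\Lambda/(f(S,T))$, which pins down the annihilator ideal, and then to use a Weierstrass-type degree argument on the specialization at $T_s=0$ to show that $q(S,T_s)$ generates the same ideal as $f(S,T)$.

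First I would observe that under the standing assumptions the module $X_{\widetilde{k}}$ is already $\Lambda$-cyclic: since $L_k \subset \widetilde{k}$, Proposition \ref{g00} yields a surjection $\Lambda/(f(S,T)) \twoheadrightarrow X_{\widetilde{k}}$, and the failure of weak GGC together with Lemma \ref{trivial} forces $\lambda(k_{\infty}^c/k)\geq 2$. Combining this with Proposition \ref{isom str} upgrades the surjection to an isomorphism $X_{\widetilde{k}} \cong \Lambda/f(S,T)\Lambda$, and in particular $\mathrm{Ann}_{\Lambda}(X_{\widetilde{k}}) = (f(S,T))$. Since $q(S,T_s)$ annihilates $X_{\widetilde{k}}$ by Lemma \ref{q(S,V)}, I can write $q(S,T_s)=f(S,T)\,h(S,T_s)$ for some $h \in \Lambda$, and the lemma reduces to the single assertion $h \in \Lambda^{\times}$.

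Next I would specialize at $T_s=0$. Using $T=(1+T_s)(1+S)^{-up^s}-1$, the equality of power series becomes
\[
q(S,0) = f(S,(1+S)^{-up^s}-1)\cdot h(S,0) \quad\text{in}\ \mathbb{Z}_p[[S]].
\]
On the other hand, reducing the cyclic presentation of $X_{\widetilde{k}}$ modulo $T_s$ gives $\mathbb{Z}_p[[S]]/(f(S,(1+S)^{-up^s}-1)) \cong X_{\widetilde{k}}/T_s X_{\widetilde{k}}$, while Proposition \ref{Zprank} identifies the right-hand side with $\mathbb{Z}_p[[S]]/\nu_{n_0}(S)\mathbb{Z}_p[[S]]$. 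Hence $f(S,(1+S)^{-up^s}-1)$ differs from $\nu_{n_0}(S)$ by a unit of $\mathbb{Z}_p[[S]]$; since $\nu_{n_0}(S)$ is distinguished of degree $p^{n_0}-1$ and reduces to $S^{p^{n_0}-1}$ modulo $p$, this forces $\mu(f(S,(1+S)^{-up^s}-1))=0$ and $\lambda(f(S,(1+S)^{-up^s}-1))=p^{n_0}-1$.

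Finally I would close by a degree comparison. By the explicit shape in Lemma \ref{q(S,V)}, $q(S,0)$ is a monic polynomial in $S$ of degree $p^{n_0}-1$ with coefficients in $\mathbb{Z}_p$, so $\mu(q(S,0))=0$ and $\lambda(q(S,0)) \leq p^{n_0}-1$. Multiplicativity of $\mu$ and $\lambda$ on nonzero elements of $\mathbb{Z}_p[[S]]$, applied to $q(S,0) = f(S,(1+S)^{-up^s}-1)\,h(S,0)$, then forces $\mu(h(S,0))=0$ and $\lambda(h(S,0))=0$; that is, $h(S,0)$ is a unit in $\mathbb{Z}_p[[S]]$. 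Evaluating at $S=0$ yields $h(0,0) \in \mathbb{Z}_p^{\times}$, and since $\Lambda$ is local with maximal ideal $(p,S,T_s)$, this gives $h \in \Lambda^{\times}$ and hence $(q(S,T_s))=(f(S,T))=\mathrm{Ann}_{\Lambda}(X_{\widetilde{k}})$. The only delicate point is drawing the identification $f(S,(1+S)^{-up^s}-1) \sim \nu_{n_0}(S)$ out of Proposition \ref{Zprank}; once that is in hand, the rest is bookkeeping of Iwasawa invariants on a one-variable quotient.
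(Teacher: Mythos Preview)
Your proof is correct and follows essentially the same route as the paper's: identify $\mathrm{Ann}_{\Lambda}(X_{\widetilde{k}})=(f(S,T))$ via Proposition \ref{isom str}, write $q=f\cdot h$, specialize at $T_s=0$, and use Proposition \ref{Zprank} to pin down $\lambda(f(S,(1+S)^{-up^s}-1))=p^{n_0}-1$ so that the $\mu,\lambda$-bookkeeping forces $h(S,0)\in\mathbb{Z}_p[[S]]^{\times}$. The only cosmetic difference is that the paper extracts the rank bound via the surjection $\mathbb{Z}_p[[S]]/(q(S,0))\twoheadrightarrow X_{\widetilde{k}}/T_sX_{\widetilde{k}}$ rather than by directly equating the two cyclic presentations of $X_{\widetilde{k}}/T_sX_{\widetilde{k}}$, but the content is the same.
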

\begin{proof}
By Proposition \ref{isom str} and Lemma \ref{q(S,V)},
we have $q(S,T_s) \in \mathrm{Ann}_{\Lambda}(X_{\widetilde{k}})=(f(S,T))$.
Then, there exists a power series $Q(S,T_s) \in \Lambda$ such that
\[
q(S,T_s) =f(S,(1+T_s)(1+S)^{-up^s}-1) Q(S,T_s).
\]
Hence we have $q(S,0)=f(S,(1+S)^{-up^s}-1)Q(S,0)$.
By the definition of $q(S,T_s)$, we have  $\mu(q(S,0))=0$ and $\lambda(q(S,0)) \leq p^{n_0}-1$.
By Lemma \ref{q(S,V)} and Proposition \ref{g00}, we have a surjective homomorphism
$
\Lambda/q(S,T_s) \Lambda \rightarrow X_{\widetilde{k}}.
$
This homomorphism induces a surjective homomorphism
$
\mathbb{Z}_p[[S]]/q(S,0) \mathbb{Z}_p[[S]] \rightarrow
\mathbb{Z}_p[[S]]/f(S,(1+S)^{-up^s}-1) \mathbb{Z}_p[[S]].
$
Using Lemma \ref{Zprank},
we obtain
\[
\mathrm{rank}_{\mathbb{Z}_p}(q(S,0)) \geq p^{n_0}-1.
\]
Thus the surjective homomorphism above is an isomorphism.
Hence we obtain $Q(S,0) \in \mathbb{Z}_p[[S]]^{\times}$.
This implies that $Q(S,T)$ is a unit in $\Lambda$.
Thus we get the conclusion.
\end{proof}
We put $H=N_{s+1}^{\ast}$ and $H_{\infty}=N_{\infty} N_{s+1}^{\ast}$.
Then $H_{\infty}/H$ is a $\mathbb{Z}_p$-extension of $H$
unramified outside all prime ideals of $H$ lying above $\mathfrak{p}$.
We prove the following
\begin{prop}\label{Zprank2}
Under the same assumption as in Proposition \ref{ant-rational},
we have
\[
\mathrm{rank}_{\mathbb{Z}_p}((X_{\widetilde{k}})_{\mathrm{Gal}(\widetilde{k}/H_{\infty})}) = p(p^{n_0}-1).
\]
\end{prop}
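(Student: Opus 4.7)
The plan is to compute the coinvariants directly, using the explicit cyclic description of $X_{\widetilde{k}}$ established in this section. First I would identify $\mathrm{Gal}(\widetilde{k}/H_{\infty})$ as the unique subgroup of index $p$ of $\overline{\langle\rho\rangle}=\mathrm{Gal}(\widetilde{k}/N_{\infty})$. Since $L_{k}\subset\widetilde{k}$ and $L_{k}\subset k_{\infty}^{a}$ force $L_{k}=k_{s}^{a}$, Lemma~\ref{Zp} applied both to $N_{\infty}$ and to $N_{\infty}^{\ast}$ (which by the $p$-split $p$-rational condition both meet $k_{\infty}^{a}$ in $k_{s}^{a}$, using that ramification of $\mathfrak{p}$ and $\mathfrak{p}^{\ast}$ in the anti-cyclotomic direction is symmetric) yields $L_{k}\subset N_{\infty}\cap N_{\infty}^{\ast}$. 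On the other hand $N_{\infty}\cap N_{\infty}^{\ast}$ is unramified everywhere over $k$, hence contained in $L_{k}$, so $N_{\infty}\cap N_{\infty}^{\ast}=L_{k}=N_{s}^{\ast}$. Thus $N_{s+1}^{\ast}\cap N_{\infty}=N_{s}^{\ast}$ and $[H_{\infty}:N_{\infty}]=p$, so under the identification $\rho\leftrightarrow 1+T_{s}$ of~(\ref{rho}) we have $\mathrm{Gal}(\widetilde{k}/H_{\infty})=\overline{\langle\rho^{p}\rangle}$.

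Next I would combine Propositions~\ref{isom str} and~\ref{g00} with Lemma~\ref{q=f} to obtain $X_{\widetilde{k}}\cong\Lambda/q(S,T_{s})\Lambda$, so that
\[
(X_{\widetilde{k}})_{\mathrm{Gal}(\widetilde{k}/H_{\infty})}\cong \Lambda\big/\bigl(q(S,T_{s}),\,(1+T_{s})^{p}-1\bigr).
\]
Writing $(1+T_{s})^{p}-1=T_{s}\,\nu_{1}(T_{s})$ with $\nu_{1}(T_{s})=\Phi_{p}(1+T_{s})$ Eisenstein of degree $p-1$, the standard exact sequence
\[
0\to\Lambda/(\nu_{1}(T_{s}))\xrightarrow{\;\cdot T_{s}\;}\Lambda/((1+T_{s})^{p}-1)\to\Lambda/(T_{s})\to 0
\]
remains short exact after reduction modulo $q$, because in the UFD $\Lambda$ the element $q$ is coprime to each of $T_{s}$ and $\nu_{1}(T_{s})$ (both non-vanishings $q(S,0)\neq 0$ and $q(S,\zeta_{p}-1)\neq 0$ are visible from the Weierstrass analysis below), so multiplication by $T_{s}$ is injective on $\Lambda/(q)$. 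It therefore suffices to compute the $\mathbb{Z}_{p}$-ranks of $\Lambda/(q,T_{s})$ and $\Lambda/(q,\nu_{1}(T_{s}))$ and add them.

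The first rank is $p^{n_{0}}-1$: indeed $\Lambda/(q,T_{s})\cong\mathbb{Z}_{p}[[S]]/(q(S,0))$ has rank $p^{n_{0}}-1$ by Proposition~\ref{Zprank}, since Proposition~\ref{g00} gives that $q(S,0)$ agrees with $\nu_{n_{0}}(S)$ up to a unit in $\mathbb{Z}_{p}[[S]]$. For the second, I would use the isomorphism $\mathbb{Z}_{p}[[T_{s}]]/(\nu_{1}(T_{s}))\cong\mathbb{Z}_{p}[\zeta_{p}]$, $T_{s}\mapsto\zeta_{p}-1$, to identify $\Lambda/(q,\nu_{1}(T_{s}))\cong\mathbb{Z}_{p}[\zeta_{p}][[S]]/(q(S,\zeta_{p}-1))$. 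Reducing $q(S,\zeta_{p}-1)$ modulo the uniformizer $\zeta_{p}-1$ of $\mathbb{Z}_{p}[\zeta_{p}]$ gives $q(S,0)\bmod p$, which by Proposition~\ref{g00} equals $\nu_{n_{0}}(S)u(S)\bmod p\equiv u(0)\,S^{p^{n_{0}}-1}\pmod{(p,S^{p^{n_{0}}})}$ for a unit $u(S)\in\mathbb{Z}_{p}[[S]]^{\times}$. Thus $q(S,\zeta_{p}-1)$ has $\mu$-invariant $0$ and $\lambda$-invariant $p^{n_{0}}-1$ over $\mathbb{Z}_{p}[\zeta_{p}]$, giving $\mathbb{Z}_{p}$-rank $(p-1)(p^{n_{0}}-1)$. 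Adding yields the claimed total $p(p^{n_{0}}-1)$. The main delicate step is this last one: one must verify that the specialization $T_{s}\mapsto\zeta_{p}-1$ preserves the expected Iwasawa invariants, and the crucial input is Proposition~\ref{g00}, which pins down $q(S,0)$ up to a unit as $\nu_{n_{0}}(S)$, a fact specific to the $p$-split $p$-rational case.
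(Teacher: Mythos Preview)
Your proof is correct, but the paper's argument is considerably shorter. The key observation you are not exploiting directly is that, by the very construction in Lemma~\ref{q(S,V)}, the annihilator $q(S,T_s)$ is \emph{monic in $S$} of degree $p^{n_0}-1$ over $\mathbb{Z}_p[[T_s]]$. Weierstrass division therefore gives a $\mathbb{Z}_p[[T_s]]$-module isomorphism
\[
\Lambda/(q(S,T_s))\;\cong\;\mathbb{Z}_p[[T_s]]^{\oplus(p^{n_0}-1)}
\]
with basis $1,S,\dots,S^{p^{n_0}-2}$. Reducing modulo $(1+T_s)^p-1$ then yields $(\mathbb{Z}_p[[T_s]]/((1+T_s)^p-1))^{\oplus(p^{n_0}-1)}\cong\mathbb{Z}_p^{\oplus p(p^{n_0}-1)}$ in one line, with no need to factor $(1+T_s)^p-1$, invoke Proposition~\ref{Zprank}, or pass to $\mathbb{Z}_p[\zeta_p]$.

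Your route---splitting along $(1+T_s)^p-1=T_s\cdot\nu_1(T_s)$ and computing the two pieces separately via Iwasawa invariants---recovers the same rank but duplicates work: the freeness over $\mathbb{Z}_p[[T_s]]$ already encodes both pieces simultaneously. (Incidentally, your citation of Proposition~\ref{g00} for ``$q(S,0)=\nu_{n_0}(S)\cdot\text{unit}$'' is slightly off: that proposition concerns $f(S,0)=g_0(S)$, not $q$. The statement you want follows instead from Proposition~\ref{Zprank} together with Lemma~\ref{q=f}, as you also indicate; or more directly, $q(S,0)$ is monic of degree $p^{n_0}-1$ by construction, which already suffices.) Your approach would have an advantage if one needed finer information about the $\mathbb{Z}_p[[S]]$-structure at each eigencomponent, but for the bare $\mathbb{Z}_p$-rank the paper's one-step argument is preferable.
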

\begin{proof}
We note that
$\mathrm{Gal}(\widetilde{k}/ H_{\infty}) = \overline{ \langle \rho^p \rangle}$.
By Lemma \ref{q=f}, we have
\begin{eqnarray*}
(X_{\widetilde{k}})_{\mathrm{Gal}(\widetilde{k}/H_{\infty})}
& \cong & \mathbb{Z}_p[[S,T_s]] /(q(S,T_s), (1+T_s)^p -1)\\
& \cong & (\mathbb{Z}_p[[T_s]]/((1+T_s)^p-1)\mathbb{Z}_p[[T_s]])^{\oplus p^{n_0}-1}\\
& \cong & \mathbb{Z}_p^{\oplus p(p^{n_0}-1)}.
\end{eqnarray*}
Thus we get the conclusion.
\end{proof}
We will prove Theorem \ref{main thm 2}.
\begin{proof}[Proof of Theorem \ref{main thm 2}]
We assume that weak {\rm{GGC}} does not hold for $p$ and $k$.
From Proposition \ref{Fuj}, $M_{\mathfrak{p^\ast}}(\widetilde{k})$ coincides with $L_{\widetilde{k}}$.
We note that $\mathfrak{p^{\ast}}$ splits completely in $N_{\infty}^{ D_{\mathfrak{p^{\ast}}}} = N_{n_0}$.
Hence all prime ideals of $H$
lying above $\mathfrak{p}^{\ast}$ do not split in $H_{\infty}$.
Let
\[
\{ \mathfrak{P}_{\infty,i}^{\ast}~|~1 \leq  i \leq p^{n_0} \}
\]
be the set of prime ideals of
$H_{\infty}$ lying above $\mathfrak{p}^\ast$.
We denote by $I_{\mathfrak{P}_{\infty,i}^{\ast}}$ the inertia subgroup of
$\mathrm{Gal}(M_{\mathfrak{p^{\ast}}}(H_{\infty}) /H_{\infty})$
for the prime $\mathfrak{P}_{\infty,i}^{\ast}$,
where $M_{\mathfrak{p^{\ast}}}(H_{\infty})$ is the maximal abelian pro-$p$ extension of $H_{\infty}$
unramified outside all prime ideals of $H_{\infty}$ lying above $\mathfrak{p}^{\ast}$.
We have $\mathrm{Gal}(M_{\mathfrak{p^\ast}}(H_{\infty})/\widetilde{k}) \cap I_{\mathfrak{P}_{\infty,i}^{\ast}} =1$ for each $i$
because $M_{\mathfrak{p^{\ast}}}(H_{\infty} ) / \widetilde{k}$ is unramified.
Hence we have $I_{\mathfrak{P}_{\infty,i}^{\ast}} \cong \mathbb{Z}_p$.
Using \cite{Gi, sch, OV},
we obtain $\mu(H_{\infty} / H)=0$
since $H/k$ is an abelian extension.
This implies that
$\mathrm{Gal}(M_{\mathfrak{p^{\ast}}}(H_{\infty} )/ H_{\infty})$ is a finitely generated as a $\mathbb{Z}_p$-module.
Furthermore, since we suppose that
$\lambda(H_{\infty}/ H)=0$, $L_{H_{\infty} }/H_{\infty}$ is a finite extension.
Thus we have
$\mathrm{rank}_{\mathbb{Z}_p}(\mathrm{Gal}(M_{\mathfrak{p^{\ast}}}(H_{\infty} )/ H_{\infty})) \leq  p^{n_0}$.
Therefore we obtain
\[
\mathrm{rank}_{\mathbb{Z}_p}(\mathrm{Gal}(M_{\mathfrak{p^{\ast}}}(H_{\infty})/ \widetilde{k})) \leq p^{n_0}-1.
\]
On the other hand, we have
\begin{eqnarray*}
\mathrm{rank}_{\mathbb{Z}_p}(\mathrm{Gal}(M_{\mathfrak{p^{\ast}}}(H_{\infty} )/ \widetilde{k}))
&=&
\mathrm{rank}_{\mathbb{Z}_p}((X_{\widetilde{k}})_{\mathrm{Gal}(\widetilde{k}/H_{\infty})})\\
&=& 
p(p^{n_0}-1)\\
&>&
p^{n_0}-1
\end{eqnarray*}
by Proposition \ref{Zprank2}.
This is a contradiction.
Thus we have completed the proof.
\end{proof} 
\section{Numerical examples}\label{example}
In this section, we introduce some numerical examples which were computed using PARI/GP.\par
Let $A_{k_{n}^c}$ be the $p$-Sylow subgroup of the ideal class group of $k_{n}^{c}$.
Then, by class field theory, we have
$\displaystyle{X_{k_{\infty}^{c}} \cong
\plim[n]A_{k_n^{c}}
}$, where the inverse limit is
taken with respect to the relative norms. 
As in Sections \ref{Preli} and \ref{decomp}, 
$X_{k_{\infty}^{c}}$ is a finitely generated torsion $\mathbb{Z}_p[[T]]$-module
via an fixed isomorphism (\ref{isom cyc}) in Section \ref{decomp}.
Let $h(T)$ be the distinguished polynomial which generates
the characteristic ideal of $X_{k_{\infty}^{c}}$.
We can calculate the polynomial $h(T)$ mod $p^n$ for small $n$ numerically
using PARI/GP.
By this method,
we can check whether the characteristic ideal of $X_{k_{\infty}^c}$ has a square-free generator.
\par
We first give a criterion whether weak GGC holds,
using the characteristic ideal of $X_{k_{\infty}^{c}}$:
\begin{prop}\label{ex1}
Assume that $L_k \subset \widetilde{k}$, $\lambda(k_{\infty}^c/k)\geq 2$, and that
$\mathrm{ord}_p(g_0(0)) > [L_k :k]$,
where $g_0(S)$ is the same power series defined in Lemma \ref{f(S,T)}.
Assume also that
the characteristic ideal of $X_{k_{\infty}^c}$
has a square-free generator.
Then weak {\rm{GGC}} holds for $p$ and $k$.
\end{prop}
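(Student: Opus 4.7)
The plan is to argue by contradiction: assume that weak GGC fails for $p$ and $k$, and apply Theorem~\ref{main thm} to derive the contradiction. To that end, I must verify the three hypotheses of Theorem~\ref{main thm}: that $k$ is not $p$-split $p$-rational; that $\lambda(N_\infty/k)=0$; and that the characteristic ideal of $X_{k_\infty^c}$ has a square-free generator. The third is assumed directly in the proposition, so the work is concentrated on the first two.

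To establish that $k$ is not $p$-split $p$-rational, I would argue by contradiction using Proposition~\ref{eqrational}. If $k$ were $p$-split $p$-rational, then (using the standing assumption $\lambda(k_\infty^c/k)\geq 2$) that proposition would force $\mathfrak{D}_{\mathfrak{p}}$ normal in $\mathrm{Gal}(\widetilde{k}/\mathbb{Q})$ and $[\mathrm{Gal}(\widetilde{k}/k):\mathfrak{D}_{\mathfrak{p}}]=\#(\mathbb{Z}_p/g_0(0)\mathbb{Z}_p)=p^{\mathrm{ord}_p(g_0(0))}$. Since $L_k\subset\widetilde{k}$ gives $L_k\cap\widetilde{k}=L_k$, Lemma~\ref{cond normal} would then yield $p^{\mathrm{ord}_p(g_0(0))}\leq [L_k:k]$. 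But the elementary inequality $p^m>m$ for $p\geq 2$ and $m\geq 0$ together with the hypothesis $\mathrm{ord}_p(g_0(0))>[L_k:k]$ gives $p^{\mathrm{ord}_p(g_0(0))}>p^{[L_k:k]}\geq [L_k:k]$, a contradiction.

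To verify $\lambda(N_\infty/k)=0$, I would invoke Minardi's criterion \cite[Proposition~1.C]{Mi}, which equates this vanishing with the capitulation of the $p$-part of the ideal class group of $k$ in $N_\infty$. Since $L_k\subset\widetilde{k}$, the principal ideal theorem gives capitulation in the full $\mathbb{Z}_p^{\oplus 2}$-extension $\widetilde{k}$; the task is to descend the capitulation to $N_\infty$ via a genus-theoretic analysis of how $\mathrm{Gal}(L_k/k)$ sits inside $\mathrm{Gal}(\widetilde{k}/k)\cong\mathbb{Z}_p^{\oplus 2}$ relative to $\mathrm{Gal}(N_\infty/k)$, exploiting Proposition~\ref{split} and the conclusion of the previous step that $k$ is not $p$-split $p$-rational.

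This descent of capitulation from $\widetilde{k}$ to the intermediate $\mathbb{Z}_p$-extension $N_\infty$ is the main obstacle, since capitulation is not in general well-behaved under subextensions, and one must exploit the specific structure of the imaginary quadratic setting (that $\mathfrak{p}$ ramifies in $N_\infty$ while $\mathfrak{p}^{\ast}$ does not) to push it through. Once both missing hypotheses are in place, Theorem~\ref{main thm} applies directly and yields weak GGC for $p$ and $k$, contradicting our starting assumption and completing the proof.
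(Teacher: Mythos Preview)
Your overall plan---verify the hypotheses of Theorem~\ref{main thm} and apply it---is exactly the paper's strategy, and your argument that $k$ is not $p$-split $p$-rational via Proposition~\ref{eqrational} and Lemma~\ref{cond normal} is correct (the paper reaches the same conclusion, going through Proposition~\ref{g00} to show the stronger fact that $\mathfrak{D}_{\mathfrak{p}}$ is not normal, but your route is fine).

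The gap is in your treatment of $\lambda(N_\infty/k)=0$. You propose to obtain capitulation of $A_k$ in $\widetilde{k}$ and then ``descend'' it to $N_\infty$, and you flag this descent as the main obstacle. In fact no descent is required, and the obstacle is illusory. From the proof of Lemma~\ref{cond normal} one has $L_k\cap\widetilde{k}=k_\infty^a\cap N_\infty$; since you are assuming $L_k\subset\widetilde{k}$, this gives $L_k=k_\infty^a\cap N_\infty\subset N_\infty$ directly. So the Hilbert $p$-class field already sits inside $N_\infty$, and one does not need to pass through $\widetilde{k}$ at all. The paper dispatches this step in one line by citing the genus formula (\cite[Remark~3.2]{Mu}), which packages exactly this observation. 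Once you replace your proposed descent by this remark, the rest of your argument goes through unchanged.
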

\begin{proof}
We have $\lambda(N_{\infty}/k)=0$ by $L_k \subset \widetilde{k}$ from genus formula
(see for example \cite[Remark $3.2$]{Mu}).
By Proposition \ref{g00},
we have $\mathrm{ord}_p(g_0(0)) = [\mathrm{Gal}(\widetilde{k}/k) :\mathfrak{D}_{{\mathfrak{p}}}]$.
From $\mathrm{ord}_p(g_0(0)) > [L_k :k]$,
$\mathfrak{D_{p}}$ is not a normal subgroup of $\mathrm{Gal}(\widetilde{k}/\mathbb{Q})$
by Lemma \ref{cond normal}.
Using Proposition \ref{eqrational}, we see that $k$ is not a $p$-split $p$-rational
field.
Furthermore, $X_{\widetilde{k}}$ is cyclic as a $\Lambda$-module by Proposition \ref{g00}.
Therefore weak GGC holds for $p$ and $k$
by Theorem \ref{main thm} and Remark \ref{frem}.
\end{proof}~\par
We use the following criteria to check whether $L_{k}$ is contained in $\widetilde{k}$.
\begin{lem}[{\cite[Corollary of Proposition 6.B]{Mi} and \cite[Theorem 1]{Br0}}]\label{brink}
Let $k=\mathbb{Q}(\sqrt{-d})$ with a square-free positive integer $d$.\\
$\mathrm{(i)}$ 
If $p=3$ and $d \not\equiv 3 \mathrm{~mod~}9$, then
$L_k \subset \widetilde{k}$
if and only if 
the class number of $\mathbb{Q}(\sqrt{3d})$ is not divisible by $3$.\\
$\mathrm{(ii)}$ Assume that $p\geq 5$ and
that $k$ has the same $p$-class number as $k({\zeta_p})$,
then
$L_k \subset \widetilde{k}$,
where $\zeta_p$ is a primitive $p$-th root of unity.
\end{lem}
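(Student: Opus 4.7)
The plan is to prove the two parts separately, following the ideas in the references \cite{Mi} and \cite{Br0}; indeed, since both parts are already established in the literature, my proposal is to outline the structural reasons rather than re-derive every step.

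For part (i), the first reduction I would make is to observe (as in the proof of Lemma \ref{cond normal}) that $L_k \cap \widetilde{k} \subset k_{\infty}^a$, so the condition $L_k \subset \widetilde{k}$ is equivalent to $L_k \subset k_{\infty}^a$. Next I would exploit the fact that, for $p=3$, the field $\mathbb{Q}(\zeta_3)$ equals $\mathbb{Q}(\sqrt{-3})$, so that $k(\zeta_3) = \mathbb{Q}(\sqrt{-d},\sqrt{-3})$ contains the real quadratic field $\mathbb{Q}(\sqrt{3d})$. This is the bridge by which the class number of $\mathbb{Q}(\sqrt{3d})$ enters: the anti-cyclotomic $\mathbb{Z}_3$-extension $k_{\infty}^a/k$ corresponds, via class field theory and the action of complex conjugation, to a piece of the $3$-class field of $k(\zeta_3)$. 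The principal step is then to apply Scholz's reflection principle, which constrains the $3$-ranks of $\mathrm{Cl}(\mathbb{Q}(\sqrt{-d}))$ and $\mathrm{Cl}(\mathbb{Q}(\sqrt{3d}))$ to differ by at most one, and to combine this with genus theory to translate $L_k \subset k_{\infty}^a$ into the statement that $\mathrm{Cl}(\mathbb{Q}(\sqrt{3d}))$ has trivial $3$-part. The congruence hypothesis $d\not\equiv 3~\mathrm{mod}~9$ is exactly what is needed to pin down the ramification behavior at $3$ in $k/\mathbb{Q}$ and in $\mathbb{Q}(\sqrt{3d})/\mathbb{Q}$ so that the reflection comparison reads as an iff rather than an inequality.

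For part (ii), I would use that for $p \geq 5$ the extension $k(\zeta_p)/k$ has degree dividing $p-1$, which is coprime to $p$. Consequently, for any $p$-group on which $G:=\mathrm{Gal}(k(\zeta_p)/k)$ acts, taking $G$-invariants is exact and identifies $A_k$ with $A_{k(\zeta_p)}^{G}$. The hypothesis that $k$ and $k(\zeta_p)$ have the same $p$-class number then forces $A_{k(\zeta_p)} = A_{k(\zeta_p)}^{G}$, so that $G$ acts trivially on $A_{k(\zeta_p)}$, and hence $L_k \cdot k(\zeta_p) = L_{k(\zeta_p)}$. Inside the cyclotomic field $k(\zeta_p)$ one applies Kummer/Leopoldt-type reflection to embed $L_{k(\zeta_p)}$ into the compositum of all $\mathbb{Z}_p$-extensions of $k(\zeta_p)$ (which is a consequence of the standard comparison between the $p$-Hilbert class field and the $p$-unit group modulo $p$-th powers in the presence of $\zeta_p$). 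Descending back to $k$ via the coprime-degree extension $k(\zeta_p)/k$ yields $L_k \subset \widetilde{k}$.

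The main obstacle in (i) is the careful bookkeeping between genus theory, Scholz's reflection, and the precise ramification at $3$ governed by the congruence condition; in (ii) the analogous tension is in verifying that the reflection in $k(\zeta_p)$ really lands the $p$-Hilbert class field inside the multiple $\mathbb{Z}_p$-extension, which is essentially Leopoldt's conjecture for $k(\zeta_p)$ but is available here since $k(\zeta_p)$ is abelian over $\mathbb{Q}$ (invoked in the same way as in \cite{Bu} in the body of the paper).
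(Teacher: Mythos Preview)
The paper does not prove this lemma at all: it is stated with a citation to \cite[Corollary of Proposition 6.B]{Mi} and \cite[Theorem 1]{Br0} and is used as a black box in the numerical examples of Section~\ref{example}. There is therefore no ``paper's own proof'' to compare your proposal against.

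That said, a few comments on your sketch. For part~(i) your outline is in the right spirit and matches Brink's approach: the reduction $L_k\subset\widetilde{k}\Leftrightarrow L_k\subset k_\infty^a$ is correct, and the entry of $\mathbb{Q}(\sqrt{3d})$ via $k(\zeta_3)=\mathbb{Q}(\sqrt{-d},\sqrt{-3})$ together with a Scholz-type reflection is exactly how the criterion is obtained; the congruence $d\not\equiv 3\pmod 9$ controls the ramification of $3$ so that the reflection comparison becomes an equivalence.

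For part~(ii), however, there is a genuine gap. From the hypothesis you correctly deduce that $G=\mathrm{Gal}(k(\zeta_p)/k)$ acts trivially on $A_{k(\zeta_p)}$ and hence that $L_{k(\zeta_p)}=L_k\cdot k(\zeta_p)$. But the next step, ``apply Kummer/Leopoldt-type reflection to embed $L_{k(\zeta_p)}$ into the compositum of all $\mathbb{Z}_p$-extensions of $k(\zeta_p)$'', is not justified and is not a general fact: the $p$-Hilbert class field of a number field containing $\zeta_p$ is \emph{not} automatically contained in its $\widetilde{(\,\cdot\,)}$, and Leopoldt's conjecture (even for abelian fields) does not give this. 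What you actually need is the vanishing of the torsion subgroup of $\mathrm{Gal}(M_p(k)/k)$, and the route to that in Minardi's argument goes through a direct analysis of how the local units at the primes above $p$ map onto $A_k$ (equivalently, how the classes of the primes above $p$ sit inside $A_k$), using the equality of $p$-class numbers to rule out new contributions from $k(\zeta_p)$. Your descent ``back to $k$ via the coprime-degree extension'' presupposes the very inclusion $L_{k(\zeta_p)}\subset\widetilde{k(\zeta_p)}$ that remains unproven in your sketch.
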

As we stated in Section \ref{In}, we can prove GGC from weak GGC if the following condition is satisfied.
\begin{prop}[{\cite[Proposition $3.1$]{Fuj10}}]\label{Fu}
Suppose that weak {\rm{GGC}} holds for $p$ and $k$
and that
$\mathrm{char}_{\mathbb{Z}_p[[\mathrm{Gal}(k_{\infty}^c/k)]]}((X_{\widetilde{k}})_{\mathrm{Gal}(\widetilde{k}/k_{\infty}^c)})$
is a prime ideal.
Then {\rm{GGC}} holds for $p$ and $k$.
\end{prop}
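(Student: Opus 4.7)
The plan is to argue by contradiction, combining the specialization formula for characteristic ideals with Proposition~\ref{Fuj} and Kataoka's theorem on the absence of non-trivial finite submodules of $X_{\widetilde{k}}$. If $X_{\widetilde{k}}=0$, then GGC holds trivially, so I shall assume $X_{\widetilde{k}}\neq 0$. Suppose for contradiction that GGC fails, so that $\mathrm{char}_{\Lambda}(X_{\widetilde{k}})=(F)$ for some non-unit $F\in\Lambda$; since $\Lambda$ is a regular local ring with maximal ideal $(p,S,T)$, a power series is a unit if and only if its constant term lies in $\mathbb{Z}_p^{\times}$, so $F(0,T)$ is a non-unit in $\mathbb{Z}_p[[T]]$.

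First I would observe that $\lambda(k_{\infty}^c/k)\geq 2$ (otherwise Lemma~\ref{trivial} forces $X_{\widetilde{k}}=0$), so that Lemma~\ref{f(S,T)} produces an annihilator $f(S,T)$ of $X_{\widetilde{k}}$ which is distinguished in $T$ over $\mathbb{Z}_p[[S]]$; in particular $S\nmid f(S,T)$. Since every irreducible factor of $F$ must divide every annihilator of $X_{\widetilde{k}}$, the element $S$ is coprime to $F$. Then I would apply the specialization formula of Perrin-Riou (as invoked in the proof of Proposition~\ref{rank}) with $\Gamma=\mathrm{Gal}(\widetilde{k}/k_{\infty}^c)=\overline{\langle\sigma\rangle}$ and $\xi=\sigma-1=S$, yielding
\[
\mathrm{char}_{\mathbb{Z}_p[[T]]}\!\bigl((X_{\widetilde{k}})_{\mathrm{Gal}(\widetilde{k}/k_{\infty}^c)}\bigr)
=\bigl(F(0,T)\bigr)\cdot\mathrm{char}_{\mathbb{Z}_p[[T]]}\!\bigl(X_{\widetilde{k}}[S]\bigr).
\]

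Next I would show that the second factor is also a proper ideal. Since weak GGC holds and $X_{\widetilde{k}}\neq 0$, condition~(i) of Proposition~\ref{Fuj} is satisfied, hence so is condition~(ii), namely $M_{\mathfrak{p}^{\ast}}(\widetilde{k})\neq L_{\widetilde{k}}$. Setting $Y=\mathrm{Gal}(M_{\mathfrak{p}^{\ast}}(\widetilde{k})/L_{\widetilde{k}})\neq 0$, the construction in the proof of Proposition~\ref{Fuj} gives a surjection $X_{\widetilde{k}}[S]\twoheadrightarrow Y/SY$, and Nakayama's lemma forces $Y/SY\neq 0$; hence $X_{\widetilde{k}}[S]\neq 0$. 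Being annihilated by the coprime pair $S$ and $f(S,T)$, the module $X_{\widetilde{k}}[S]$ is pseudo-null over $\Lambda$. Kataoka's theorem \cite{Kata20} rules out any non-trivial finite $\Lambda$-submodule of $X_{\widetilde{k}}$, so $X_{\widetilde{k}}[S]$ must be infinite; as a finitely generated $\mathbb{Z}_p[[T]]$-torsion module which is not finite, it has a proper characteristic ideal.

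I would conclude by noting that $\mathrm{char}_{\mathbb{Z}_p[[T]]}((X_{\widetilde{k}})_{\mathrm{Gal}(\widetilde{k}/k_{\infty}^c)})$ has been expressed as a product of two proper principal ideals in the UFD $\mathbb{Z}_p[[T]]$, which cannot be prime; this contradicts the hypothesis, so $F$ must be a unit and GGC holds. The main technical obstacle is to verify the coprimality hypothesis $S\nmid F$ that makes the specialization formula applicable, and then to combine Proposition~\ref{Fuj} (which supplies non-triviality of $X_{\widetilde{k}}[S]$ from weak GGC) with Kataoka's theorem (which upgrades this non-triviality to infiniteness, and hence to a proper characteristic ideal).
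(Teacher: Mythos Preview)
The paper does not supply its own proof of this proposition; it is quoted verbatim from Fujii \cite[Proposition~3.1]{Fuj10}. So there is no in-paper argument to compare against, and your task reduces to whether your proof stands on its own.

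Your argument is correct. The chain of reasoning---$S\nmid F$ via the distinguished annihilator $f(S,T)$ of Lemma~\ref{f(S,T)}, the Perrin--Riou specialization identity
\[
\mathrm{char}_{\mathbb{Z}_p[[T]]}\bigl((X_{\widetilde{k}})_{\mathrm{Gal}(\widetilde{k}/k_{\infty}^c)}\bigr)=(F(0,T))\cdot\mathrm{char}_{\mathbb{Z}_p[[T]]}\bigl(X_{\widetilde{k}}[S]\bigr),
\]
non-triviality of $X_{\widetilde{k}}[S]$ via the surjection onto $Y/SY$ extracted from the proof of Proposition~\ref{Fuj}, and finally infinitude of $X_{\widetilde{k}}[S]$ via Kataoka's theorem---is sound, and the conclusion that a prime ideal cannot factor as a product of two proper principal ideals in the UFD $\mathbb{Z}_p[[T]]$ finishes it cleanly. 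The hypotheses of the Perrin--Riou lemma are met because $(X_{\widetilde{k}})_{\mathrm{Gal}(\widetilde{k}/k_{\infty}^c)}\cong\mathbb{Z}_p^{\oplus\lambda^\ast}$ is visibly $\mathbb{Z}_p[[T]]$-torsion.

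One remark on provenance: your proof leans on Kataoka's result \cite{Kata20}, which postdates Fujii's paper by a decade, so Fujii's original argument necessarily proceeds differently at the step where you upgrade $X_{\widetilde{k}}[S]\neq 0$ to $X_{\widetilde{k}}[S]$ infinite. Your route is legitimate within the present paper (Kataoka is cited in the introduction), but you should be aware that it is not a reconstruction of Fujii's proof; it is an alternative that happens to be available here.
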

Using Propositions \ref{ex1}, \ref{Fu}, and Lemma \ref{brink} above,
we can prove GGC for $(p,k)=(3,\mathbb{Q}(\sqrt{-971}))$, $(3,\mathbb{Q}(\sqrt{-17291}))$,
and $(5,\mathbb{Q}(\sqrt{-2239}))$.~\\
\begin{Ex}~\label{971}
\begin{rm}
Put $p=3$ and put $k=\mathbb{Q}(\sqrt{-971})$.
Then $3$ splits completely in $k$.
By PARI/GP, we have
$
A_k
\cong \mathbb{Z}/3\mathbb{Z}$.
We can check that $L_k \subset \widetilde{k}$ by Lemma \ref{brink}.
Indeed, the class number of $\mathbb{Q}(\sqrt{2913})$ is $7$. 
Hence, we have $\lambda(N_{\infty}/k)=0$.
Using PARI/GP,
we have 
\[
h(T)\equiv T^2+64638T~~\mathrm{mod}~3^{11}.
\]
By Hensel's lemma, there exists an integer $\alpha \in \mathbb{Z}_3$ such that
\[
h(T)=T(T+\alpha ),
\]
where $\alpha$ satisfies that $\alpha \equiv 486$ mod $3^6$.
Hence the characteristic ideal of $X_{k_{\infty}^c}$
has a square-free generator.
Let $f(S,T)$ be the same power series defined in Lemma \ref{f(S,T)}.
By Proposition \ref{char cyc},
we have $\mathrm{ord}_3(f(0,0))=\mathrm{ord}_3(\alpha)=5$.
Hence
$k$ is not $p$-split $p$-rational
and
weak GGC holds by Proposition \ref{ex1}.
Furthermore, by Proposition \ref{Fu}, GGC holds since we have $\lambda(k_{\infty}^c/k)=2$.
\end{rm}
\end{Ex}~\par
\begin{Ex}~\label{17291}
\begin{rm}
Put $p=3$ and put $k=\mathbb{Q}(\sqrt{-17291})$.
Then $3$ splits completely in $k$.
By PARI/GP, we have $A_k \cong \mathbb{Z}/3\mathbb{Z}$.
We can check that $L_k \subset \widetilde{k}$ by Lemma \ref{brink}.
Indeed, the class number of $\mathbb{Q}(\sqrt{51873})$ is $1$. 
Hence we have $\lambda(N_{\infty}/k)=0$.
Furthermore we have 
\[
h(T) \equiv T^4 + 405T^3 + 72T^2 + 522T ~~\mathrm{mod}~3^7.
\]
This implies that there exists an irreducible polynomial $g(T) \in \mathbb{Z}_p[[T]]$ such that
$h(T)=Tg(T).$
Hence the characteristic ideal of $X_{k_{\infty}^c}$
has a square-free generator.
Let $f(S,T)$ be the same power series defined in Lemma \ref{f(S,T)}.
By Proposition \ref{char cyc},
we have $\mathrm{ord}_3(f(0,0))=\mathrm{ord}_3(522)=2$.
Hence
$k$ is not $p$-split $p$-rational
and
weak GGC holds by Proposition \ref{ex1}.
Furthermore, by Proposition \ref{Fu}, GGC holds 
since $g(T)$ is irreducible.
\end{rm}
\end{Ex}~\par
\begin{Ex}~\label{2239}
\begin{rm}
Put $p=5$ and put $k=\mathbb{Q}(\sqrt{-2239})$.
Then $5$ splits completely in $k$.
By PARI/GP, we have $A_k \cong \mathbb{Z}/5\mathbb{Z}$.
We can check that $L_k \subset \widetilde{k}$ by Lemma \ref{brink}.
Indeed, the class number of $k(\zeta_5)$ is $560$. 
Hence we have $\lambda(N_{\infty}/k)=0$.
Furthermore we have 
\[
h(T)\equiv T^2+3100T~~\mathrm{mod}~5^5.
\]
By Hensel's lemma, there exists an integer $\alpha \in \mathbb{Z}_5$ such that
\[
h(T)=T(T+\alpha ),
\]
where $\alpha$ satisfies that $\alpha \equiv 100$ mod $5^3$.
Hence the characteristic ideal of $X_{k_{\infty}^c}$
has a square-free generator.
Let $f(S,T)$ be the same power series defined in Lemma \ref{f(S,T)}.
By Proposition \ref{char cyc},
we have $\mathrm{ord}_5(f(0,0))=\mathrm{ord}_5(\alpha)=2$.
Hence
$k$ is not $p$-split $p$-rational.
From Proposition \ref{ex1},
weak GGC for $p$ and $k$ holds.
Furthermore, by Proposition \ref{Fu}, GGC holds for $k$ 
since $\lambda(k_{\infty}^c/k)=2$.
\end{rm}
\end{Ex}~\par
Let $k_{\infty}/k$ be any $\mathbb{Z}_p$-extension.
For a non-negative integer $n$, 
let $k_n$ be the $n$-th layer of $k_{\infty}/k$ and $\mathcal{O}_{k_n}$ the ring of integers of $k_n$.
We denote by $A_{k_n}$ the $p$-Sylow subgroup of the ideal class group of $k_{n}$.
We define a homomorphism $i_{k_n}:A_{k} \rightarrow A_{k_n}$
by sending the ideal class of $\mathfrak{a}$ to the class of $\mathfrak{a}\mathcal{O}_{k_n}$
for every ideal $\mathfrak{a}$ of $k$.\par
We use the following criteria to check whether $\lambda(N_{\infty}/k)=0$.
\begin{prop}[{\cite[Proposition 1.C]{Mi}}]\label{Mi-cri}
The Iwasawa $\lambda$-invariant of $k_{\infty}/k$ is zero
if and only if
$A_{k} = \displaystyle{\bigcup_{n\geq 0} \mathrm{Ker}(i_n)}$, in other words,
every ideal class of $A_{k}$ capitulates in $k_{\infty}$.
\end{prop}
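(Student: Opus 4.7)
The plan is to identify $A_{k_n}$ and the capitulation maps in $\Lambda$-module terms on the Iwasawa module $X := X_{k_{\infty}}$, and then deduce both directions from the structure theorem combined with the known $\mu$-vanishing.

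First, under the setting in which Minardi works (the prime $\mathfrak{p}$ above $p$ is the unique ramified prime of $k_{\infty}/k$ and is totally ramified, as holds for $N_{\infty}/k$ with $k$ imaginary quadratic in which $p$ splits), a splitting of the inertia at $\mathfrak{p}$ in $\mathrm{Gal}(L_{k_{\infty}}/k) \cong X \rtimes \Gamma$ together with the commutator computation $[X,\Gamma] = TX$ yields canonical isomorphisms $X/\omega_n X \xrightarrow{\sim} A_{k_n}$, where $\omega_n = (1+T)^{p^n} - 1$. Under these, norms correspond to the natural surjections $X/\omega_n X \twoheadrightarrow X/\omega_m X$ and the transfer maps $i_{k_n}: A_k \to A_{k_n}$ correspond to multiplication by $U_n := \omega_n/T \in \Lambda$. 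Hence capitulation of $A_k$ in $k_{\infty}$ becomes the assertion that $U_n X \subset \omega_n X = T \cdot U_n X$ for all $n$ sufficiently large.

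For the implication $\lambda(k_{\infty}/k) = 0 \Rightarrow$ capitulation, I would combine the hypothesis with the known $\mu(k_{\infty}/k) = 0$ (Gillard, Schneps, Oukhaba-Vigui\'{e}) to conclude that $X$ is a finite $p$-group, and then apply the Fitting decomposition $X = X_{\mathrm{inv}} \oplus X_{\mathrm{nil}}$ with respect to $T$. On $X_{\mathrm{inv}}$ the containment $U_n X_{\mathrm{inv}} = T \cdot U_n X_{\mathrm{inv}}$ is automatic since $T$ acts as a unit. On $X_{\mathrm{nil}}$, with $T^N X_{\mathrm{nil}} = 0$ and $p^a X_{\mathrm{nil}} = 0$ for suitable $N,a$, the expansion $U_n = \sum_{j \geq 0} \binom{p^n}{j+1} T^j$ combined with the estimate $v_p(\binom{p^n}{j+1}) = n - v_p(j+1)$ forces $U_n$ to annihilate $X_{\mathrm{nil}}$ for $n$ large enough, which trivially yields the containment.

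For the converse, suppose every class of $A_k$ capitulates; then $T$ acts surjectively, hence bijectively by the Noetherian property, on $U_n X$ for all large $n$. I would argue by contradiction: if $\lambda > 0$, then $\mu = 0$ together with the finiteness of $X/TX \cong A_k$ (which forces $T$ to be coprime to $\mathrm{char}(X)$) gives, via the structure theorem, a pseudo-isomorphism between $X$ and a nontrivial direct sum $\bigoplus_j \Lambda/(f_j^{b_j})$ with distinguished polynomials $f_j$ of positive degree coprime to $T$. On any summand $Y = \Lambda/(f_j^{b_j})$ one identifies $U_n Y \cong \Lambda/(f_j^{b_j}/\gcd(U_n, f_j^{b_j}))$ and shows that the cokernel of $T$ acting on $U_n Y$ is a nonzero quotient of $\mathbb{Z}_p/f_j(0)\mathbb{Z}_p$ for all large $n$, contradicting the bijectivity of $T$ on $U_n X$ after tracking the (finite) kernel and cokernel of the pseudo-isomorphism. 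The main obstacle is this converse, specifically ruling out the possibility that $\gcd(U_n, f_j^{b_j})$ eventually absorbs all of $f_j^{b_j}$, which would require $f_j$ to consist entirely of cyclotomic factors $T - (\zeta - 1)$ for $p$-power roots of unity $\zeta$; in our imaginary quadratic setting this is excluded using specific features of the characteristic ideal of $X_{N_{\infty}}$.
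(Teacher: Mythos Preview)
The paper does not prove this proposition; it is quoted from Minardi's thesis \cite[Proposition~1.C]{Mi} and used as a black box, so there is no argument in the present paper against which to compare your attempt.

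Regarding the attempt itself, the forward direction is sound. The converse has a gap that you explicitly flag but do not close: the phrase ``excluded using specific features of the characteristic ideal of $X_{N_\infty}$'' is a placeholder, not an argument. In fact the missing step is elementary and does not require anything special about the imaginary quadratic setting. Since each $A_{k_n}$ is a \emph{finite} group and (once ramification is total) $A_{k_n}\cong X/\omega_n X$, the element $\omega_n$ is automatically coprime to $\mathrm{char}_{\Lambda}(X)$; hence so is $U_n=\omega_n/T$, and therefore $\gcd(U_n,f_j^{b_j})=1$ for every $j$ and every $n$. Combined with the fact that $X_{N_\infty}$ has no nonzero finite $\Lambda$-submodule (only one prime ramifies), this makes multiplication by $U_n$ \emph{injective} on $X$, so $U_n X\cong X$ as $\Lambda$-modules. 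Your capitulation hypothesis $U_n X=T\cdot U_n X$ then reads $X=TX$; together with $\mu=0$ (so a distinguished polynomial kills $X$, whence $T^dX\subset pX$) this forces $X=pX$ and hence $X=0$, giving $\lambda=0$ directly. This is both shorter and cleaner than routing through the pseudo-isomorphism and tracking cokernels.

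A second, smaller issue: your identifications $A_k\cong X/TX$ and $i_n\leftrightarrow U_n$ assume $\mathfrak{p}$ is totally ramified in $N_\infty/k$ from level~$0$, which fails precisely when $L_k\cap N_\infty\neq k$; the fix is to shift the base to the layer where ramification becomes total, but as written your setup does not account for this.
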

\begin{prop}[{\cite[Theorem 1]{Fuku}}]\label{Fukuda}
Let $k_{\infty}/k$ be any $\mathbb{Z}_p$-extension.
Let $c$ be a non-negative integer such that any prime ideal of $k$ which is ramified
in $k_{\infty}/k$ totally ramified in $k_{\infty}/k_{c}$.
If there exists an integer $n \geq c$ such that $\# A_{k_{n}} = \# A_{k_{n+1}}$,
then $\# A_{k_{m}} = \# A_{k_{n}}$ for all $m \geq n$.
In particular, we see that $\mu(k_{\infty}/k)=\lambda(k_{\infty}/k)=0$.
\end{prop}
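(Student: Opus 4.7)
The plan is to work inside the Iwasawa module $X=\mathrm{Gal}(L_\infty/k_\infty)$, where $L_\infty$ is the maximal unramified pro-$p$ abelian extension of $k_\infty$. Fix a topological generator $\gamma$ of $\Gamma=\mathrm{Gal}(k_\infty/k)$ and identify $\mathbb{Z}_p[[\Gamma]]$ with $\Lambda_1:=\mathbb{Z}_p[[T]]$ via $\gamma\leftrightarrow 1+T$; set $\omega_n=(1+T)^{p^n}-1$ and $\nu_{n,c}=\omega_n/\omega_c$. The total-ramification hypothesis beyond $k_c$ says that for each prime $v$ of $k$ ramified in $k_\infty/k$, the inertia subgroup $I_v\subset\mathrm{Gal}(L_\infty/k)$ maps isomorphically onto $\mathrm{Gal}(k_\infty/k_c)=\Gamma^{p^c}$. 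Choosing generators $\sigma_v\in I_v$ and writing $\sigma_v=\gamma^{p^c}a_v$ with $a_v\in X$ (normalized so $a_{v_1}=0$ for one fixed ramified prime $v_1$), a standard computation yields, for every $n\geq c$, a canonical isomorphism
\[
A_{k_n}\;\cong\;X/Y_n,\qquad Y_n=\omega_n X+\sum_{v\neq v_1}\nu_{n,c}\,a_v\Lambda_1,
\]
in which the natural surjection $A_{k_{n+1}}\twoheadrightarrow A_{k_n}$ coming from the norm corresponds to the projection induced by the inclusion $Y_{n+1}\subset Y_n$.

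With this identification, the equality $\#A_{k_n}=\#A_{k_{n+1}}$ forces the surjection $X/Y_{n+1}\twoheadrightarrow X/Y_n$ of finite abelian $p$-groups to be an isomorphism, so $Y_n=Y_{n+1}$. The key step is then to propagate: show that $Y_n=Y_{n+1}$ implies $Y_{n+1}=Y_{n+2}$. Working modulo $\omega_{n+1}X$, one has $\nu_{n+2,c}\equiv p\cdot\nu_{n+1,c}$ mod $\omega_{n+1}X\Lambda_1$ (using $(1+T)^{p^{n+1}}\equiv 1$ mod $\omega_{n+1}$) and $\omega_{n+2}\in\omega_{n+1}\Lambda_1$, so that $Y_{n+2}+\omega_{n+1}X=Y_{n+1}+\omega_{n+1}X=Y_n+\omega_{n+1}X$. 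The assumption $Y_n=Y_{n+1}$ therefore gives $Y_{n+2}+\omega_{n+1}X=Y_{n+1}$; applying Nakayama's lemma to the finitely generated $\Lambda_1$-module $Y_{n+1}/Y_{n+2}$ (which is annihilated by $\omega_{n+1}$ and is actually a quotient of a finitely generated $\mathbb{Z}_p$-module living in the maximal ideal) yields $Y_{n+1}=Y_{n+2}$. Iterating, $Y_m=Y_n$ and hence $\#A_{k_m}=\#A_{k_n}$ for all $m\geq n$.

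Finally, Iwasawa's asymptotic formula $\#A_{k_m}=p^{\mu p^m+\lambda m+\nu}$ for all sufficiently large $m$, combined with the bounded orders just obtained, forces $\mu(k_\infty/k)=\lambda(k_\infty/k)=0$. The main obstacle is the propagation step: translating the numerical stabilization into the module-theoretic equality $Y_n=Y_{n+1}\Rightarrow Y_{n+1}=Y_{n+2}$, which requires a careful handling of the inertia contributions $\nu_{n,c}a_v$ when more than one prime ramifies, together with the Nakayama-type reduction modulo $\omega_{n+1}$. Once this algebraic core is in place, everything else is a direct application of the structure theorem.
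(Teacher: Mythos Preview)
The paper does not prove this proposition; it is quoted from Fukuda's paper and used as a black box in Section~\ref{example}. Your outline follows the standard argument (which is essentially Fukuda's own): identify $A_{k_n}\cong X/Y_n$ with $Y_n=\nu_{n,c}Y_c$ for the fixed $\Lambda_1$-submodule $Y_c=\omega_c X+\sum_{v\neq v_1}\Lambda_1 a_v$, and then propagate the stabilization.

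The propagation step as you wrote it, however, does not go through. From $\nu_{n+2,n+1}\equiv p\pmod{\omega_{n+1}}$ one only obtains
\[
Y_{n+2}+\omega_{n+1}X \;=\; p\,Y_{n+1}+\omega_{n+1}X,
\]
not $Y_{n+1}+\omega_{n+1}X$; the extra factor of $p$ cannot be absorbed without already knowing what you are trying to prove. Your subsequent claim that $Y_{n+1}/Y_{n+2}$ is annihilated by $\omega_{n+1}$ is also incorrect: one would need $\omega_{n+1}\in\nu_{n+2,n+1}\Lambda_1$, but $\omega_{n+1}$ and $\nu_{n+2,n+1}$ are coprime. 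So the Nakayama argument applied to $Y_{n+1}/Y_{n+2}$ is not justified as stated.

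The fix is simpler than the route you attempted. Since $Y_m=\nu_{m,c}Y_c$ for all $m\geq c$, the hypothesis $Y_n=Y_{n+1}$ reads
\[
Y_n=\nu_{n+1,n}\,Y_n.
\]
Because $\nu_{n+1,n}\in(p,T)$ (indeed $\nu_{n+1,n}(0)=p$) and $Y_n$ is a finitely generated $\Lambda_1$-module, Nakayama's lemma applied directly to $Y_n$ gives $Y_n=0$. Hence $Y_m=\nu_{m,n}Y_n=0$ for every $m\geq n$, so $A_{k_m}\cong X$ is finite and independent of $m$, and Iwasawa's asymptotic formula forces $\mu(k_\infty/k)=\lambda(k_\infty/k)=0$. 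This is Fukuda's actual argument, and it bypasses the inductive step entirely.
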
~\\
We can prove GGC for $(p,k)=(3,\mathbb{Q}(\sqrt{-5069}))$ by Theorem \ref{main thm} and propositions above.~\\
\begin{Ex}~\label{5069}
\begin{rm}
Put $p=3$ and put $k=\mathbb{Q}(\sqrt{-5069})$.
Then $3$ splits completely in $k$.
Using PARI/GP, we can check that
$A_k \cong \mathbb{Z}/3\mathbb{Z} \oplus \mathbb{Z}/3\mathbb{Z}$
and that
$\mathrm{Cl}_{k}$ is generated by the classes of the prime ideals lying above $5$ and $19$.
Using Fujii's criteria (\cite[Lemma 4.3]{Fuj10}), we have
$\mathrm{Gal}(\widetilde{k}/L_k \cap \widetilde{k}) \cong \mathbb{Z}/3\mathbb{Z}$.
Then we have 
\[
h(T)\equiv T(T +1989) ~~\mathrm{mod}~3^7.
\]
There exists an integer $\alpha \in \mathbb{Z}_3$ such that
$f(T)=T(T+\alpha)$
with $\alpha \equiv 45$ mod $3^5$.
Hence the characteristic ideal of $X_{k_{\infty}^c}$ 
is generated by a square-free generator.\par
Next we prove that $\lambda(N_{\infty}/k)=0$.
Using criteria in \cite[Theorem $2$]{Br}, we obtain,
\begin{eqnarray*}
x^6 - 2x^5 - 33x^4 - 70x^3 + 5462x^2 - 38784x + 83808
\end{eqnarray*}
as a defining polynomial of $k_{1}^{a}$ over $\mathbb{Q}$.
Then we see that $3$ splits completely in $k_1^{a}$.
Furthermore, we obtain
\begin{eqnarray*}
&~&x^{18} - 30x^{16} - 604x^{15} - 7705x^{14} - 45030x^{13} + 13536x^{12}+ 2239858x^{11}\\
&+& 34485072x^{10}+305384350x^9 + 2021144762x^8 + 11280115140x^7\\
&+& 53497712456x^6 + 198613209492x^5+ 669494384271x^4 + 1792552517970x^3\\
&+& 3479920548408x^2 + 6719313226248x + 8090872119549
\end{eqnarray*}
as a defining polynomial of $N_{2}$ over $\mathbb{Q}$.
Then all prime ideal classes above $5$ and $19$ become principal in $N_2$.
Hence $\lambda(N_{\infty}/k)=0$ by Proposition \ref{Mi-cri}.
We can also check this from Proposition \ref{Fukuda}.
Indeed, using PARI/GP, we see that
any prime ideal of $k$ which is ramified
in $N_{\infty}/k$ totally ramified in $N_{\infty}/k_{1}^a$.
Furthermore, $k_1^a$ and $N_2$ has the same $p$-class number
since $\mathrm{ord}_3(\# A_{k_1^a})=\mathrm{ord}_3(\# A_{N_2})=3$.
Hence we get $\lambda(N_{\infty}/k)=0$ by Proposition \ref{Fukuda}.
Therefore weak GGC holds by Theorem \ref{main thm}.
Furthermore, by Proposition \ref{Fu}, GGC holds
since $\lambda(k_{\infty}^c/k)=2$.
\end{rm}
\end{Ex}
\section*{Acknowledgements}
The author is sincerely grateful to
Professor Masato Kurihara and Professor Yoshitaka Hachimori
for their kind encouragement and valuable advice.
The author expresses his thanks to Professor
Satoshi Fujii and Doctor John Minardi for their beautiful studies in
\cite{Fuj10, Fuj} and \cite{Mi}.
The author would like to express his deepest appreciation
to Toshihiro Shirakawa
for his valuable advice about calculation
in Section \ref{example}.

\end{document}